\newtheorem{thm}{Theorem}[section]
\newtheorem{pro}[thm]{Proposition}
\newtheorem{lem}[thm]{Lemma}
\newtheorem{cor}[thm]{Corollary}
\theoremstyle{remark}
\newtheorem{rem}[thm]{Remark}
\newcommand{\Int}[2]{\displaystyle{\int_{#1}^{#2}}}
\newcommand{\R}{\mathbb{R}}
\newcommand{\norm}[1]{\left\|#1\right\|}
\def\sig#1{\vbox{\hsize=5.5cm
\kern2cm\hrule\kern1ex
\hbox to \hsize{\strut\hfil #1 \hfil}}}
\newcommand\signatures[4]{%
\vspace{3cm}
\hbox to \hsize{\hfil #1, \today\hfil}
\vspace{3cm}
\hbox to \hsize{\quad#2\hfil\hfil #3\quad}
\vspace{3cm}
\hbox to \hsize{\hfil#4\hfil}}
\numberwithin{equation}{section}
\title[Robin Laplacian]{Eigenvalues  for  the Robin Laplacian in domains with variable curvature}
\author[B. Helffer]{Bernard Helffer}
\address[B. Helffer]{Universit\'e de Paris-Sud, B\^at 425, 91405 Orsay, France and Laboratoire Jean Leray, Universit\'e de Nantes, France.}
\email{bernard.helffer@math.u-psud.fr}
\author[A. Kachmar]{Ayman Kachmar}
\address[A. Kachmar]{Lebanese University, Department of Mathematics, Hadath, Lebanon.}
\email{ayman.kashmar@gmail.com}
\begin{document}
\thispagestyle{empty}

\newpage
\maketitle
\begin{abstract}
We determine accurate asymptotics for the low-lying eigenvalues of
the Robin Laplacian when the Robin parameter goes to $-\infty$. The
two first terms in the expansion have been obtained by K.
Pankrashkin in the $2D$-case and  by K. Pankrashkin and  N. Popoff
in higher dimensions. The asymptotics display the influence of the
 curvature and the splitting between every two consecutive
eigenvalues. The analysis is based on the approach developed by
Fournais-Helffer for the semi-classical magnetic Laplacian.  We also
propose a WKB construction as candidate for the ground state energy.
\end{abstract}

\section{Introduction}
Let $\Omega\subset\R^{2}$ be an open domain with a smooth $C^\infty$
and compact boundary $\Gamma=\partial\Omega$ such that the  boundary
$\Gamma$ has a finite number of connected components. The  unit
outward normal vector  of the boundary $\partial\Omega$ is denoted
by $\nu$.

In this paper, we study the low-lying eigenvalues of the Robin
Laplacian in $L^2(\Omega)$ with a large parameter. This is the
operator
\begin{equation}\label{Shr-op-Gen}
\mathcal{P}^\gamma=-\Delta\quad{\rm in~}L^2(\Omega),
\end{equation}
with domain,
\begin{equation}\label{eq:bc}
D(\mathcal P^\gamma)=\{u\in H^2(\Omega)~:~\nu\cdot\nabla u+\gamma\,u=0\quad{\rm on}~\partial\Omega\}\,,
\end{equation}
where $\gamma<0$ is a given parameter. The operator $\mathcal
P^\gamma$ is  defined by the Friedrichs Theorem via the closed
semi-bounded quadratic form, defined on $H^1(\Omega)$ by
\begin{equation}\label{QF-Gen}
u\mapsto  \mathcal{Q}^\gamma(u):=\norm{\nabla u}^{2}_{L^{2}(\Omega)}+\gamma\Int{\partial\Omega}{}|u(x)|^{2}ds(x)\,.
\end{equation}
Denote by $(\lambda_n(\gamma))$ the sequence of min-max values of
the operator $\mathcal P^\gamma$. In \cite{Pan, PanP, EMP}, it is
proved that, for every fixed $n\in\mathbb N$,
\begin{equation}\label{eq:pan}
\lambda_n(\gamma)=-\gamma^2+\kappa_{\max}\gamma+\gamma\, o(1)\quad{\rm
as~}\gamma\to-\infty\,,\end{equation} where $\kappa_{\max}$ is the
maximal  curvature along the boundary $\Gamma$. Note that the
first term in \eqref{eq:pan} was obtained previously (see \cite{LP}
and references therein).

 If the domain $\Omega$ is an exterior
domain, then the operator $\mathcal{P}^\gamma$ has an essential
spectrum; the essential spectrum is $[0,\infty)$, see e.g.
\cite{KP}. In this case, the asymptotics in \eqref{eq:pan} show
that, for every fixed $n$,  if $-\gamma$ is selected sufficiently
large, then $\lambda_n(\gamma)$ is in the discrete spectrum of the
operator $\mathcal P^\gamma$.  When the domain $\Omega$ is an
interior domain, i.e. bounded, then by Sobolev embedding, the
operator $\mathcal{P}^\gamma$ is with compact resolvent and its
spectrum is purely discrete.

The aim of this paper is to improve the asymptotic expansion in
\eqref{eq:pan}  and to give the leading term of the spectral gap
$\lambda_{n+1}(\gamma)-\lambda_n(\gamma)$. We will do this under a
generic assumption on  the domain $\Omega$ that we describe below.

Suppose that the boundary $\partial\Omega$ is parameterized by
arc-length and $\kappa$ is the  curvature of $\partial\Omega$.
Let $\kappa_{\max}$ denotes the maximal value of $\kappa$. We choose
the arc-length parametrization of the boundary such that
$\kappa(0)=\kappa_{\max}$. {\bf Throughout this paper, we suppose
that:}
$$
Assumption~(A)\qquad\left\{
\begin{array}{l}
\kappa \text{ \rm  attains its maximum } \kappa_{\max} \text{ \rm at a unique point}\,;  \\
\text{\rm the maximum is non-degenerate, i.e. } k_2:=-\kappa''(0)>0.
\end{array}
\right.
$$
The main result in this paper is:

\begin{thm}\label{thm:HK}
For any positive $n$, there exists a sequence {$(\beta_{j,n})_{j\geq0}$, such  that, for any positive $M\in\mathbb N$}, the eigenvalue
$\lambda_n(\gamma)$ has,  as $\gamma\to-\infty$, the  asymptotic
expansion
$$ 
\lambda_n(\gamma)=-\gamma^2+\gamma \kappa_{\max}+(2n-1)\sqrt{\frac{k_2}2}\,|\gamma|^{1/2}+\sum_{j=0}^M\beta_{j,n}|\gamma|^{\frac{-j}{2}}+|\gamma|^{\frac{-M}2}o(1)
\,.$$
\end{thm}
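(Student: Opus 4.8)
The plan is to follow the Fournais–Helffer strategy for the semiclassical magnetic Laplacian, adapted to the Robin boundary condition. First I would introduce the small parameter $h=|\gamma|^{-1}$ and rescale: using tubular (boundary normal) coordinates $(s,t)$ near $\Gamma$, where $s$ is arc-length and $t=\dist(x,\Gamma)$, the operator $\mathcal P^\gamma$ becomes, after conjugation by the Jacobian $(1-t\kappa(s))^{1/2}$ and the substitution $t=h\tau$, $s=h^{1/2}\sigma$, a model operator on the half-space $\{\tau>0\}$. The leading operator is the harmonic oscillator in $\sigma$ coming from the curvature well $\kappa(0)-\tfrac12 k_2 h \sigma^2 + \cdots$ tensored with the one-dimensional Robin model $-\partial_\tau^2$ on $\R_+$ with condition $\partial_\tau u(0)=-u(0)$, whose lowest eigenvalue is $-1$ with eigenfunction $\sqrt{2}\,e^{-\tau}$. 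This produces exactly the three displayed terms $-\gamma^2+\gamma\kappa_{\max}+(2n-1)\sqrt{k_2/2}\,|\gamma|^{1/2}$.

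Next I would set up the full perturbation series. Expanding $(1-t\kappa(s))^{-1}$, $(1-t\kappa(s))^{1/2}$ and $\kappa(h^{1/2}\sigma)$ in powers of $h^{1/2}$, the rescaled operator admits a formal expansion $\mathcal L_0 + h^{1/2}\mathcal L_1 + h\,\mathcal L_2 + \cdots$ where $\mathcal L_0$ is the model operator above. I would then construct formal quasimodes $\Psi \sim \sum_{k\geq 0} h^{k/2}\psi_k$ and formal eigenvalues $\mu \sim \sum_{k\geq0} h^{k/2}\mu_k$ by solving the hierarchy of equations $(\mathcal L_0-\mu_0)\psi_k = \text{(lower order terms)}$ order by order. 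At each step the Fredholm alternative (solvability in the $\tau$-variable against $e^{-\tau}$, then in the $\sigma$-variable against the Hermite function $h_{n-1}$) determines $\mu_k$ and $\psi_k$; the nondegeneracy hypothesis $k_2>0$ in Assumption~(A) is what makes the $\sigma$-harmonic oscillator genuine and guarantees the Hermite functions form the right basis, while uniqueness of the maximum localizes the construction near $s=0$. Truncating at order $M$ and multiplying by a cutoff supported in a neighborhood of the point of maximal curvature gives an approximate eigenfunction $u_{M,n}^\gamma$ with $\|(\mathcal P^\gamma - \Lambda_{M,n})u_{M,n}^\gamma\| \leq C_M |\gamma|^{-M/2+1}\|u_{M,n}^\gamma\|$, where $\Lambda_{M,n}$ is the truncated series; the spectral theorem then places an eigenvalue of $\mathcal P^\gamma$ within that distance, yielding the upper bound direction.

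For the matching lower bound — that $\lambda_n(\gamma)$ is actually the eigenvalue so approximated, with the stated remainder — I would use the now-standard two-sided argument. A partition of unity splits $\Omega$ into a boundary tube and an interior region; on the interior the quadratic form is bounded below by something $\gg -\gamma^2$, and in the tube one compares $\mathcal Q^\gamma$ with the model quadratic form. Combining with an Agmon-type estimate showing that eigenfunctions associated to $\lambda_n(\gamma)$ decay exponentially away from the point of maximal curvature (in the rescaled $\sigma$ variable), one gets that the $n$-th min-max value of $\mathcal P^\gamma$ and the $n$-th min-max value of the model operator agree up to $O(|\gamma|^{-M/2}o(1))$ errors; since the quasimodes above span an $n$-dimensional space on which $\mathcal P^\gamma$ acts nearly diagonally with the values $\Lambda_{M,j}$, $1\le j\le n$, the $n$-th eigenvalue is pinned to $\Lambda_{M,n} + |\gamma|^{-M/2}o(1)$, and we set $\beta_{j,n}$ to be the coefficient of $|\gamma|^{-j/2}$ in the formal series (noting only integer powers of $|\gamma|^{-1/2}$ beyond the third term survive, which one checks from the parity structure of the hierarchy). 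The main obstacle is the bookkeeping and uniformity in the quasimode hierarchy — controlling that the formal solutions $\psi_k$ remain in the correct weighted spaces with Gaussian-type decay in $\sigma$ and exponential decay in $\tau$ so that the cutoff errors are genuinely $O(|\gamma|^{-\infty})$ — together with proving the exponential localization (Agmon estimates) rigorously in the boundary-layer coordinates, which is where the harmonic approximation and the effective potential $\kappa_{\max}-\kappa(s)$ must be handled with care.
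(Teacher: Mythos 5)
Your overall architecture coincides with the paper's: pass to a semiclassical parameter, work in boundary normal coordinates, reduce to the tensor product of the half-line Robin model $-\partial_\tau^2$ with $u'(0)=-u(0)$ (ground energy $-1$, ground state $\sqrt2\,e^{-\tau}$) and the harmonic oscillator $-\partial_\sigma^2+\tfrac{k_2}{2}\sigma^2$ generated by the curvature well, build quasimodes order by order via Fredholm solvability first in $\tau$ against $e^{-\tau}$ and then in $\sigma$ against the Hermite functions, localize eigenfunctions near the point of maximal curvature by Agmon estimates, and pin the $n$-th min--max value by combining the quasimode upper bounds with a lower bound and the spectral gap. The parity argument identifying which powers survive is also exactly what the paper does.

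There is, however, a concrete error in your scaling which would make the hierarchy fail as written. With your convention $h=|\gamma|^{-1}$, the normal scale $t=h\tau$ is correct, but the tangential scale must be $s=h^{1/4}\sigma=|\gamma|^{-1/4}\sigma$, not $s=h^{1/2}\sigma$: the effective boundary operator is $-\partial_s^2-\gamma^2+\gamma\kappa(s)$, and its well $\tfrac{|\gamma|k_2}{2}s^2$ balances $-\partial_s^2$ only at the length scale $|\gamma|^{-1/4}$ (this is precisely the paper's substitution $(s,t)=(h^{1/8}\sigma,h^{1/2}\tau)$ in its convention $h=\gamma^{-2}$). With $s=h^{1/2}\sigma$ the kinetic term $-\partial_s^2=-h^{-1}\partial_\sigma^2$ sits at order $h^{-1}$ while the quadratic part of the potential sits at order $h^{0}$, so no $h$-independent harmonic oscillator appears at any single order, your $\mathcal L_0$ is not the operator you describe, and the order-by-order solvability scheme does not close. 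The fix is only the exponent, after which the formal series runs in powers of $|\gamma|^{-1/4}$ and the parity argument is what reduces it to powers of $|\gamma|^{-1/2}$. Two smaller points of bookkeeping: your quasimode residual $C_M|\gamma|^{-M/2+1}$ is much larger than the precision $o(|\gamma|^{-M/2})$ required, so one must truncate at a higher order than $M$; and the lower bound need only be proved to three-term precision $o(|\gamma|^{1/2})$ --- that, together with the gap $2\sqrt{k_2/2}\,|\gamma|^{1/2}$ and the family of quasimodes, already pins $\lambda_n(\gamma)$ to the full series, so the $M$-dependent two-sided min--max comparison you invoke is neither needed nor what the paper establishes.
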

%
%
%
Compared with  the existing semi-classics of Schr\"odinger operators
(see \cite{HSj}), the  curvature in Theorem~\ref{thm:HK} acts
as a potential well. The assumption $(A)$ indicates the case of a
unique well.  In the case considered here, one can  say that, in the limit $\gamma \rightarrow -\infty$, the bottom of the spectrum of the operator $\mathcal P^\gamma$ is determined by  the (boundary)  operator 
$H^{\rm bnd}:= - \frac{d^2}{ds^2}- \gamma^2   + \gamma  \kappa (s)\,$. Here $s$ is the arc-length parameterization of the boundary and the error through this approxomation is  $\mathcal O(1)$. Loosely speaking, the reduced (boundary) operator $H^{\rm bnd}$ is analyzed by replacing the potential (curvature) $-\kappa(s)$
by its quadratic approximation   at the (unique) point of  minimum.

As in \cite{HSj}, a natural and interesting question is
to discuss the case of multiple wells.  If we replace $(A)$ by
$$
(A')~\left\{
\begin{array}{l}
\kappa \text{ \rm  attains its maximum } \kappa_{\max} \text{ \rm at a finite number of points }\{s_0,s_1,\cdots,s_j\}\,;  \\
\text{\rm the maximum is non-degenerate, i.e. for all }i,~ k_{2,i}:=-\kappa''(s_i)>0\,.\\
\end{array}
\right.
$$
then many effects can appear depending on the values of the
$\kappa''(s_i)$ (as in the case of the Schr\"odinger operator). In
case of symmetries, the determination of the tunneling effect
between the points of maximal curvature is expected to play an
important role. A limiting situation is discussed in \cite{HPan}
when the domain $\Omega$ has two congruent corners (at a corner, we
can assign the value $\infty$ to $\kappa_{\max}$).  In the regular
case (typically an ellipse), an interesting step could be a
construction of WKB solutions in the spirit of a recent work by V.
Bonnaillie, F. H\'erau and N.~Raymond \cite{BHR}. We address this
WKB construction shortly in Section~\ref{sec:wkb}, and hope to come
back at this point in a future work.
\subsection*{Acknowledgements} The authors would like to thank K.
Pankrashkin for useful discussions. The first author is supported by the ANR programme NOSEVOL and the second author  is supported by a grant from
Lebanese University.
\section{Transformation into  a semi-classical problem}

We will prove Theorem~\ref{thm:HK} by transforming  it into  a
semi-classical problem as follows.  Let
$$h=\gamma^{-2}\,.$$
 The limit
$\gamma\to-\infty$ is now equivalent to the semi-classical limit
$h\to0_+$. Notice the simple identity
$$\forall~ u\in H^1(\Omega)\,,\quad \mathcal Q^\gamma(u)=h^{-2}\left(\int_\Omega|h\nabla
u|^ 2-h^{3/2}\int_{\partial\Omega}|u|^2\,ds(x)\right)\,.$$
We define the
operator \begin{equation}\label{eq:Lh} \mathcal
L_h=-h^2\Delta\,,\end{equation} with domain,
\begin{equation}\label{eq:Lh-dom}
D(\mathcal L_h)=\{u\in H^2(\Omega)~:~
{ \nu\cdot h^{1/2}\nabla u-u=0}
{\rm ~on~}\partial\Omega\}\,.\end{equation} Clearly,
$$ \sigma (\mathcal P^\gamma)=h^{-2}\sigma(\mathcal L_h)\,.$$
 Let
$(\mu_n(h))$ be the sequence of min-max values of the operator
$\mathcal L_h$. To determine the asymptotics of $\lambda_n(\gamma)$
as $\gamma\to-\infty$, we will determine the semi-classical
asymptotics of $\mu_n(h)$ as $h\to0_+$.

Now, Theorem~\ref{thm:HK} is a rephrasing of:

\begin{thm}\label{thm:HK'}
For any positive $n$, there exists a sequence 
{ $(\beta_{j,n})_{j\geq 0}$,}
 such that,  as $h\to0_+$, the eigenvalue $\mu_n(h)$
has for any positive 
{ $M\in\mathbb N$}
 the  asymptotic expansion
$$\mu_n(h)=-h-\kappa_{\max}h^{3/2}+(2n-1)\sqrt{\frac{k_2}2}\,h^{7/4}+h^{2}\sum_{j=0}^M
\beta_{j,n} h^{j/4}+h^{2+\frac{M}{4}}o(1)\,.$$
\end{thm}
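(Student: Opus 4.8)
The plan is to reduce the problem to a one-dimensional effective operator on the boundary, following the Fournais--Helffer strategy, and then to analyze that operator by a harmonic-oscillator expansion. First I would introduce boundary coordinates $(s,t)$ in a tubular neighborhood of $\Gamma$, where $s$ is arc-length and $t=\dist(x,\Gamma)$, and rewrite $\mathcal L_h=-h^2\Delta$ in these coordinates: the metric reads $ds^2(1-t\kappa(s))^2+dt^2$, which turns the quadratic form into an expression involving the Jacobian $a(s,t)=1-t\kappa(s)$. Since the low-lying eigenfunctions are known a priori to decay exponentially away from the boundary on a scale $t\sim h^{1/2}$ (Agmon-type estimates, obtainable from the rough asymptotics \eqref{eq:pan}), I would rescale $t=h^{1/2}\tau$ and Taylor-expand $a$, the boundary condition, and the differential operator in powers of $h^{1/4}$. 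This produces a model operator on the half-cylinder that, at leading orders, decouples into the normal harmonic-type problem $-\partial_\tau^2$ with the Robin condition $\partial_\tau u = u$ at $\tau=0$ (ground state $e^{-\tau}$, eigenvalue $-1$) tensored with a tangential operator.

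The next step is the dimensional reduction. I would use the Feshbach--Grushin (Lyapunov--Schmidt) projection onto the normal ground state $e^{-\tau}$ to produce, order by order in $h^{1/4}$, an effective one-dimensional operator on $L^2(\Gamma,ds)$ of the form
\begin{equation*}
\mathcal M_h = -h^2\frac{d^2}{ds^2} - h - \kappa(s)\,h^{3/2} + h^{2}\,r_h(s,hD_s)\,,
\end{equation*}
where $r_h$ is a bounded family of pseudodifferential corrections with a full expansion in $h^{1/4}$. This is the rigorous incarnation of the heuristic operator $H^{\rm bnd}$ mentioned after Theorem~\ref{thm:HK}. The reduction has to be accompanied by two-sided bounds: an upper bound via quasimodes (inserting $e^{-\tau}\otimes v(s)$ with $v$ built from the one-dimensional analysis) and a matching lower bound via a partition of unity and the IMS localization formula, controlling the errors of the Grushin reduction by the exponential decay estimates. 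Here one must track that the remainder after truncating at order $M$ is genuinely $o(h^{2+M/4})$ in the form sense on the relevant spectral subspace.

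The final step analyzes the one-dimensional operator $\mathcal M_h$. After subtracting the constant $-h-\kappa_{\max}h^{3/2}$ and dividing by $h^{3/2}$, one is left with $-h^{1/2}\,d^2/ds^2 + (\kappa_{\max}-\kappa(s))$ plus lower-order terms; near $s=0$, Assumption $(A)$ gives $\kappa_{\max}-\kappa(s)=\tfrac{k_2}{2}s^2+O(s^3)$, so with the scaling $s=h^{1/8}\sigma$ this is $h^{1/4}$ times a perturbed harmonic oscillator $-d^2/d\sigma^2+\tfrac{k_2}{2}\sigma^2$, whose $n$-th eigenvalue is $(2n-1)\sqrt{k_2/2}$ — explaining the $h^{7/4}$ term and its splitting. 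A standard analytic-perturbation/Born--Oppenheimer argument (as in \cite{HSj}), combined with the uniqueness and non-degeneracy of the well, then yields a complete asymptotic expansion $\mu_n(h)\sim -h-\kappa_{\max}h^{3/2}+(2n-1)\sqrt{k_2/2}\,h^{7/4}+h^2\sum_j\beta_{j,n}h^{j/4}$, with the coefficients $\beta_{j,n}$ computable from the formal expansion. The main obstacle I anticipate is not any single calculation but the bookkeeping of error terms through the Grushin reduction: one must show that the pseudodifferential remainder $h^2 r_h$, after the tangential rescaling $s=h^{1/8}\sigma$ which magnifies $hD_s$ to $h^{3/4}D_\sigma$, still contributes only at the claimed orders and does not spoil the $h^{7/4}$ coefficient — this requires careful symbol estimates and a clean spectral-stability statement relating the eigenvalues of $\mathcal L_h$ to those of $\mathcal M_h$ up to $o(h^{2+M/4})$.
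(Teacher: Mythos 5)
Your overall architecture (boundary coordinates, normal rescaling $t=h^{1/2}\tau$, projection onto the normal Robin ground state $e^{-\tau}$, tangential rescaling $s=h^{1/8}\sigma$, harmonic approximation at the unique non-degenerate maximum of $\kappa$) is the correct skeleton and matches the paper's. But two genuine gaps remain, and the first is precisely the point where the paper takes a cheaper route that you have not found.

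First, you propose a symmetric, full-order Feshbach--Grushin reduction: an effective operator $\mathcal M_h$ on the boundary whose spectrum agrees with that of $\mathcal L_h$ up to $o(h^{2+M/4})$ for every $M$, proved by matching upper and lower bounds at all orders. You yourself identify the error bookkeeping of this reduction as ``the main obstacle'' and then do not resolve it; as written, the claim that the pseudodifferential remainder $h^2 r_h$ is controlled to all orders on the spectral subspace is an assertion, not a proof, and it is the hard part. The paper avoids this entirely by an asymmetric argument: (i) quasimodes constructed to all orders give, via the spectral theorem, the \emph{existence} of eigenvalues $\widetilde\mu_n(h)$ of $\mathcal L_h$ with the full expansion; (ii) a lower bound is proved only to \emph{three} terms (by applying the projections $R_0^\pm$ to genuine eigenfunctions of $\mathcal L_h$ and feeding the resulting functions into the min-max principle for the harmonic oscillator); (iii) the splitting $(2n-1)\sqrt{k_2/2}\,h^{7/4}$ in the two-sided three-term asymptotics then forces $\widetilde\mu_n(h)=\mu_n(h)$ for small $h$, upgrading the expansion to the min-max eigenvalues. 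This identification step is the missing idea that makes a full-order lower bound unnecessary.

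Second, your claimed expansion in integer powers of $h^{1/4}$ is not justified by your scheme. After the scaling $s=h^{1/8}\sigma$, the cubic and higher Taylor terms of $\kappa$ at $s=0$ generate corrections at half-steps, so the natural outcome of the iterative construction is an expansion in powers of $h^{1/8}$ (the paper's coefficients $\zeta_{j,n}h^{j/8}$). One must then prove that the coefficients of the odd half-steps vanish; the paper does this by a parity argument in $\sigma$ (evenness/oddness of the Hermite functions $f_n$ against the parity of the polynomial coefficients $q_{k,j}$). Without this argument your statement of the theorem, with remainder $h^{2+M/4}o(1)$, does not follow from the construction you describe.
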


The proof of Theorem~\ref{thm:HK'} consists of two major steps. In
the first step, we establish a  three-term asymptotics
$$\mu_n(h)=-h-\kappa_{\max}h^{3/2}+(2n-1)\sqrt{\frac{k_2}2}\,h^{7/4}+h^{7/4}o(1)\,,$$
which is valid under the weaker assumption that the boundary of the
domain $\Omega$ is $C^4$ smooth. This asymptotic expansion will be
established in Sections~\ref{sec:ub} and \ref{sec:lb}.

The next step is to construct good trial states and use the spectral
theorem to establish existence of $n$-eigenvalues of the operator
$\mathcal L_h$ satisfying the refined asymptotics
$$\widetilde\mu_n(h)=-h-\kappa_{\max}h^{3/2}+(2n-1)\sqrt{\frac{k_2}2}\,h^{7/4}+h^{15/8}\sum_{j=0}^M\zeta_{j,n} h^{j/8}+h^{\frac{M+15}{8}}o(1)\,,$$
which in light of the three-term asymptotics for $\mu_n(h)$, yields
the equality
$$\mu_n(h)=\widetilde\mu_n(h)\,,$$
for  $h>0$ sufficiently small. This analysis is presented in
Section~\ref{sec:Gr}. { Right after the construction of the sequence $(\zeta_{j,n})$, we will prove in Section~\ref{sec:zeta}  that   $\zeta_{j,n}=0$ when $j$ is even. The coefficients in Theorem~\ref{thm:HK} are given by the formula $\beta_{j,n}=\zeta_{2j+1,n}$.}

Throughout this paper, the notation $\mathcal O(h^\infty)$ indicates
a quantity satisfying that, for all $N\in\mathbb N$, there exists
$C_N>0$ and $h_N>0$ such that, for all $h\in(0,h_N)$, $|\mathcal
O(h^\infty)|\leq C_Nh^N\,$.   Examples of such quantities are
exponentially small quantities, like $\exp(-h^{-1/2})$.  The
letter $C$ denotes a positive constant independent of the parameter
$h$. The value of $C$ might change from one formula to another.

\section{Boundary coordinates}\label{sec:app}
The key to prove  Theorem~\ref{thm:HK'} is a reduction to the
boundary. Near the boundary, we do the computations using specific
coordinates displaying the arc-length along the boundary and the
normal distance to the boundary. In  this section, we introduce the
necessary notation to use these coordinates.

We will work in a connected component of the boundary. For
simplicity, we suppose that  $\Omega$ is simply connected; if
$\Omega$ is not simply connected, we use the coordinates in each
connected component independently.
Let
\[
\mathbb{R}/(|\partial \Omega |\mathbb{Z})\ni s\mapsto M(s)\in\partial\Omega
\]
 be a parametrization of $\partial\Omega$. The unit tangent vector of $\partial\Omega$ at the point $M(s)$ of the boundary is given by
\[
T(s):= M^{\prime}(s).
\]
We define the  curvature $\kappa(s)$ by the following identity
\[
T^{\prime}(s)=\kappa(s)\, \nu(s),
\]
where $\nu(s)$ is the unit vector, normal to to the
boundary, pointing outward at the point $M(s)$. We choose the
orientation of the parametrization $M$ to be counterclockwise, so
\[
\det(T(s),\nu(s))=1, \qquad \forall s\in \mathbb{R}/(|\partial \Omega |\mathbb{Z}) .
\]
For all $\delta>0$, define
\[
\mathcal{V}_{\delta}= \{x\in \Omega~:~{\rm dist} (x,\partial\Omega)<\delta\},
\]
 The map $\Phi$ is defined as follows~:
\begin{equation}
\Phi:\mathbb{R}/(|\partial \Omega |\mathbb{Z})\times(0,t_{0})\ni  (s,t)\mapsto x= M(s)-t\, \nu(s)\in \mathcal{V}_{t_{0}}.
\end{equation}
Alternatively, for $x\in\mathcal{V}_{t_{0}}$, one can write
\begin{equation}\label{BC}
\Phi^{-1}(x):=(s(x),t(x))\in { \mathbb{R}/(|\partial \Omega |\mathbb{Z})\times (0,t_{0})},
\end{equation}
where $t(x)={\rm dist}(x,\partial\Omega)$ and
$s(x)\in\mathbb{R}/(|\partial \Omega |\mathbb{Z})$ is the coordinate
of the point $M(s(x))\in\partial\Omega$ satisfying ${\rm
dist}(x,\partial\Omega)= |x-M(s(x))|$.

The determinant of the Jacobian of the transformation $\Phi^{-1}$ is
given by:
\[
a(s,t)=1-t\kappa(s).
\]
For all $u\in L^{2}(\mathcal V_\delta)$, define  the function
\begin{equation}
\widetilde u(s,t):= u(\Phi(s,t)).
\end{equation}
For all $u\in H^{1}(\mathcal{V}_{t_{0}})$, we have, with $\widetilde
u=u\circ \Phi$,
\begin{equation}\label{eq:bc;qf}
\int_{\mathcal{V}_{t_{0}}}|\nabla u|^{2}dx= \int \Big
[(1-t\kappa(s))^{-2}|\partial_{s} \widetilde u|^{2} +
|\partial_{t}\widetilde
u|^{2}\Big](1-t\kappa(s))dsdt\,, \end{equation}
and
\begin{equation}\label{eq:bc;n}
\int_{\mathcal{V}_{t_{0}}}|u|^{2}dx=\int|\widetilde u(s,t)|^{2}(1-t\kappa(s))dsdt.
\end{equation}

\section{Three auxiliary operators}\label{section4}
In this section, we introduce three reference $1D$-operators and
determine their spectra.

\subsection{1D Laplacian on the half line}\
 As simplest model, we start with the operator
\begin{equation}\label{defH00}
\mathcal H_{0,0}:=-\frac{d^2}{d\tau^2} \mbox{ in }  L^2(\R_+)
\end{equation}
with domain
\begin{equation}
\{u\in
H^2(\R_+)~:\,u'(0)=- u(0)\}\,.
\end{equation}
 The spectrum of this operator is
$\{-1\}\cup[0,\infty)$.
 This operator will appear in Sections \ref{sec:ub}, \ref{sec:Gr}
and \ref{sec:wkb}.

However, this operator has the drawback
that it is not with compact resolvent. To overcome this issue, we
consider this operator in a bounded interval $(0,L)$ with $L$
sufficiently big and Dirichlet condition at $\tau=L$. This operator
has a compact resolvent, and we will have fair knowledge about the
behavior of its spectrum in the limit $L\to\infty$.

\subsection{1D Laplacian on an interval}\ \\
Let $h>0$ and $\rho\in(0,1)$. Consider the self-adjoint operator
\begin{equation}\label{eq:H0}
\mathcal H_{0,h}=-\frac{d^2}{d\tau^2}\quad{\rm in~}L^2((0,h^{-\rho}))\,,
\end{equation}
with domain,
\begin{equation}\label{eq:DomH0}
D(\mathcal H_{0,h})=\{u\in H^2((0,h^{-\rho}))~:~u'(0)=-u(0)\quad{\rm and}\quad u(h^{-\rho})=0\}\,.
\end{equation}
The spectrum of the operator $\mathcal H_{0,h}$ is purely discrete
and consists of a strictly increasing sequence of eigenvalues
denoted by $(\lambda_n(\mathcal H_{0,h}))$.\\
 Note that this operator is associated with the quadratic form
$$
V_h\ni u\mapsto \int_0^{h^{-\rho}} |u'(\tau)|^2\,d\tau\,  - |u (0)|^2\,,
$$
with $V_h :=\{v\in H^1((0,h^{-\rho}))\,|\, v(h^{-\rho})=0\}$.

The next lemma gives the  localization of  the two first eigenvalues
$\lambda_1(\mathcal H_{0,h})$ and $\lambda_2(\mathcal H_{0,h})$ for
small values of $h$.

\begin{lem}\label{lem:1DL}
As $h\to0_+$, there holds
\begin{equation}\label{eq:lwh}
\lambda_1(\mathcal H_{0,h})= - 1 + 4 \big(1+o(1)\big) \exp\big( - 2 h^{-\rho}\big)\quad{\rm
and}\quad \lambda_2(\mathcal H_{0,h})\geq 0\,.
\end{equation}
\end{lem}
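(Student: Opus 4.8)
The operator $\mathcal H_{0,h}$ is a one-dimensional Schrödinger operator on the finite interval $(0,h^{-\rho})$ with a Robin (negative) boundary condition at $\tau=0$ and Dirichlet at $\tau=h^{-\rho}$; it is a bounded-interval perturbation of the half-line model $\mathcal H_{0,0}$ whose spectrum is $\{-1\}\cup[0,\infty)$. The plan is to solve the eigenvalue equation $-u''=\lambda u$ explicitly on $(0,L)$ with $L=h^{-\rho}$ and extract the transcendental equation for $\lambda$ from the two boundary conditions, then analyze it as $L\to\infty$. For the lowest eigenvalue I expect $\lambda_1=-\mu^2$ with $\mu\to 1_-$, so I write $u(\tau)=A\cosh(\mu\tau)+B\sinh(\mu\tau)$; the Robin condition $u'(0)=-u(0)$ gives $\mu B=-A$, and the Dirichlet condition at $L$ gives the equation $\coth(\mu L)=\mu$ (after dividing). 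As $L\to\infty$, $\coth(\mu L)=1+2e^{-2\mu L}+o(e^{-2\mu L})$, so $\mu=1+2e^{-2\mu L}+o(\dots)$, hence $\mu^2=1+4e^{-2L}(1+o(1))$, i.e. $\lambda_1(\mathcal H_{0,h})=-\mu^2=-1+4(1+o(1))e^{-2h^{-\rho}}$, which is the claimed formula (note $\mu\to1$ forces $e^{-2\mu L}=e^{-2L}(1+o(1))$).

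For the second eigenvalue I only need the lower bound $\lambda_2\geq0$, which is cleaner to get variationally than by solving the transcendental equation for the first positive eigenvalue. I would argue: by the min-max principle it suffices to exhibit a one-dimensional subspace on whose orthogonal complement (within the form domain $V_h$) the quadratic form $q_h(u)=\int_0^L|u'|^2\,d\tau-|u(0)|^2$ is nonnegative. The natural choice is to take the span of the normalized ground state $u_1$, or — since I want an explicit criterion — to compare with the half-line operator. Concretely, any $u\in V_h$ extends by zero to a function in $H^1(\R_+)$ vanishing on $[L,\infty)$, and $q_h(u)$ equals the half-line form $q_0$ evaluated on this extension; since $\mathcal H_{0,0}$ has only one eigenvalue below $0$ (namely $-1$, simple, with eigenfunction $e^{-\tau}$), the restriction of $q_0$ to the codimension-one subspace $\{v: \langle v, e^{-\tau}\rangle=0\}$ is $\geq0$. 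Intersecting with the image of $V_h$ still gives a codimension-(at most one) subspace of $V_h$ on which $q_h\geq0$, and the min-max principle then yields $\lambda_2(\mathcal H_{0,h})\geq0$.

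The main obstacle is making the asymptotic analysis of $\coth(\mu L)=\mu$ rigorous and extracting exactly the constant $4$ and the $(1+o(1))$ error, i.e. controlling that the unique solution $\mu=\mu(L)$ in $(0,1)$ (or nearby) is well-defined, tends to $1$, and satisfies $\mu L\to\infty$ fast enough that $e^{-2\mu L}/e^{-2L}\to1$; a short fixed-point or monotonicity argument on the function $\mu\mapsto\mu-\coth(\mu L)$ handles this. A minor point to check is that there is genuinely a negative eigenvalue of $\mathcal H_{0,h}$ for small $h$ (so that $\lambda_1<0<\lambda_2$ is consistent): this follows by testing $q_h$ on a cutoff of $e^{-\tau}$, which gives a value $\leq -1+\mathcal O(e^{-2L})<0$. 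Everything else is routine ODE computation with hyperbolic functions.
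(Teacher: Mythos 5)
Your strategy for $\lambda_1$ --- solve $-u''=\lambda u$ explicitly on $(0,L)$ with $L=h^{-\rho}$, extract a transcendental equation from the two boundary conditions, and expand as $L\to\infty$ --- is exactly the paper's, but your transcendental equation carries a sign error and your final chain of asymptotics does not cohere. With $u=A\cosh(\mu\tau)+B\sinh(\mu\tau)$ and $A=-\mu B$, the Dirichlet condition $u(L)=0$ gives $\tanh(\mu L)=\mu$ (equivalently $\coth(\mu L)=1/\mu$), not $\coth(\mu L)=\mu$. Since $\tanh<1$, the correct equation forces $\mu\in(0,1)$, i.e.\ $\mu\to1_-$ as you yourself anticipate; the equation $\coth(\mu L)=\mu$ would force $\mu>1$ and hence $\lambda_1=-\mu^2<-1$, which is impossible by domain monotonicity against $\mathcal H_{0,0}$ (spectrum $\{-1\}\cup[0,\infty)$). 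As written you then compound the slip: from $\mu=1+2e^{-2\mu L}+o(\cdot)$ you correctly deduce $\mu^2=1+4e^{-2L}(1+o(1))$, but then $-\mu^2=-1-4e^{-2L}(1+o(1))$, not the claimed $-1+4(1+o(1))e^{-2h^{-\rho}}$. The repaired route is $\tanh(\mu L)=\mu$, hence $\mu=1-2e^{-2\mu L}(1+o(1))$, $\mu^2=1-4e^{-2L}(1+o(1))$, and $\lambda_1=-1+4(1+o(1))e^{-2h^{-\rho}}$. This is precisely the paper's computation, which writes $u=A\bigl(e^{-w\tau}-e^{-2wL}e^{w\tau}\bigr)$ and obtains $w(1+e^{-2wL})=1-e^{-2wL}$, i.e.\ $w=\tanh(wL)$; the paper makes the uniqueness and localization of the root rigorous by sign considerations on $f(v)=v-1+(v+1)e^{-2vL}$ outside $(1-h^{\rho/2},1)$ and monotonicity of $f$ inside, which is the concrete version of the ``fixed-point or monotonicity argument'' you defer.

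For $\lambda_2\geq0$, your variational argument (extend by zero to $\mathbb{R}_+$, use that $\mathcal H_{0,0}$ has a single simple eigenvalue below $0$, and apply min--max on a codimension-one subspace) is correct and is essentially the argument the paper records in the remark immediately following the lemma; the paper's own proof instead deduces $\lambda_2\geq0$ from the absence of a second root of the transcendental equation in $(0,\infty)$. Both are valid; yours has the advantage of not relying on the explicit solution. So the only genuine defect is the sign error above, which is fixable but, as written, breaks the derivation of the first asymptotic formula.
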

\begin{proof}
Let $w\geq0$ and $\lambda=-w^2$ be a negative eigenvalue of the
operator $\mathcal H_{0,h}$ with an eigenfunction $u$. We have,
\begin{equation} \label{ab1}
-u''=\lambda u\quad{\rm in~}(0,h^{-\rho})\,,\quad
u'(0)=-u(0)\,,\quad u(h^{-\rho})=0\,.
\end{equation}
 If $w=0$ and $h<1$, then the unique
 solution of \eqref{ab1} is $u=0$. Thus, we suppose that $w>0$. Easy
 computations give us
\begin{equation} \label{ab2}
u(\tau)=A\Big(\exp(-w\tau)-\exp(-2wh^{-\rho})\exp(w\tau)\Big)\,,
\end{equation}
with the following condition
on $w$,
\begin{equation} \label{ab3}
w\Big(1+\exp(-2wh^{-\rho})\Big)=1-\exp(-2wh^{-\rho})\,.
\end{equation}
Define the function
$$f(v)=v\Big(1+\exp(-2vh^{-\rho})\Big)-\Big(1-\exp(-2vh^{-\rho})\Big)=v-1+(v+1)\exp(-2vh^{-\rho})\,.$$
We will locate the zeros of $f$ in $\R_+=(0,\infty)$. Notice that
$f(v)> 0$ for all $v\geq 1$. This forces the solution $w$ of
\eqref{ab3} to live in $(0,1)$. 

{ 
If $v\in[h^{\rho/2},1-h^{\rho/2}]$, then as $h\to0_+$, the term $\exp(-2vh^{-\rho})$ is exponentially small and
$$\frac{v+1}{v-1}\exp(-2vh^{-\rho})\to0\,.$$
Writing 
$$f(v)=(v-1)\left(1+\frac{v+1}{v-1}\exp(-2vh^{-\rho})\right)\,,$$
}
we observe that  there exists
$h_0\in(0,1)$ such that, for all $h\in(0,h_0)$,
$$
f(v)<0 
\quad{\rm in}\quad [h^{\rho/2},1-h^{\rho/2}]\,.$$
This forces $w$ to live in $(0,h^{\rho/2})\cup (1-h^{\rho/2},1)$.
Since $f(v)=-1+o(1)$ as $v\to0_+$, then there exists $h_0\in(0,1)$
such that, for all $h\in(0,h_0)$ and $v\in (0,h^{\rho/2})$,
$f(v)<0$. Thus, $f$ may vanish  in $(1-h^{\rho/2},1)$ only. Notice
that
$$
f'(v)=1-2h^{-\rho}\left(v+1-\frac{h^\rho}2\right)\exp(-2vh^{-\rho})>0\quad{\rm
in}\quad(1-h^{\rho/2},1)\,.$$ 
Thus $f$ has a unique zero $w_h$ in
the interval $(1-h^{\rho/2},1)$. This zero corresponds to the first
eigenvalue of $\mathcal H_{0,h}$ by the relation $\lambda_1(\mathcal
H_{0,h})=-w_h^2$. Inserting
$$w_h=1+\mathcal O(h^{\rho/2})$$ into \eqref{ab3}, we get that
\begin{equation}\label{eq:wh}
w_h= 1 -  2 \big(1+ o(1)\big) \exp\big( - 2 h^{-\rho}\big)\,.
\end{equation}  As a consequence, we observe that
$\lambda_1(\mathcal H_{0,h})=-w_h^2$ satisfies \eqref{eq:lwh}.
Since $f$ does not vanish in $\R_+\setminus (1-h^{\rho/2},1)$, then
$\lambda_2(\mathcal H_{0,h})\geq 0\,.$
\end{proof}

\begin{rem} Note that by domain monotonicity and minimax
characterization, we get directly { from} the spectrum of $\mathcal
H_{0,0}$ that $\lambda_1(\mathcal H_{0,h} ) \geq -1$ and
$\lambda_2(\mathcal H_{0,h}) \geq 0$.
\end{rem}

\begin{rem}\label{rem:es}
Along the proof of Lemma~\ref{lem:1DL}, we obtain that the
eigenspace of the first eigenvalue $\lambda_1(\mathcal H_{0,h})$ is
generated by the function
\begin{equation}\label{u0h}
u_{0,h}(\tau)=A_h\Big(\exp(-w_h\tau)-\exp(-2w_hh^{-\rho})\exp(w_h\tau)\Big)\,,
\end{equation}
with $w_h$ a constant satisfying \eqref{eq:wh}. 
The constant $A_h$ is selected { so that} $u_{0,h}$ is normalized in
$L^2((0,h^{-\rho}))$. As $h\to0_+$, $A_h$ satisfies,
\begin{equation}\label{A_h}
A_h=\sqrt{2}+\mathcal O\Big(h^{-\rho/2}\exp(-h^{-\rho})\Big)\,.
\end{equation}
\end{rem}

\subsection{1D operator in a weighted space}~\\
Let $h\in(0,1)$, $\beta\in\R$, $\rho\in(0,1/2)$ and
$|\beta|h^{\frac12-\rho}<\frac13$. Consider the self-adjoint
operator
\begin{equation}\label{eq:H0b}
\mathcal H_{\beta,h}=-\frac{d^2}{d\tau^2}+\beta h^{1/2}(1-\beta
h^{1/2}\tau)^{-1}\frac{d}{d\tau}\quad{\rm in~}\quad
L^2\big((0,h^{-\rho});(1-\beta h^{1/2}\tau)d\tau\big)\,,\end{equation} with domain
\begin{equation}\label{eq:domH0b}
D(\mathcal H_{\beta,h})=\{u\in H^2((0,h^{-\rho}))~:~u'(0)=-u(0)\quad{\rm and}\quad u(h^{-\rho})=0\}\,.
\end{equation}
The operator $\mathcal H_{\beta,h}$ is the Friedrichs extension in
$L^2\big((0,h^{-\rho});(1-\beta h^{1/2}\tau)d\tau\big)$  associated
with the quadratic form defined for $u\in V_h$, by
$$q_{\beta,h}(u)=\int_0^{h^{-\rho}}|u'(\tau)|^2(1-\beta
h^{1/2}\tau)\,d\tau-|u(0)|^2\,.$$ The operator $\mathcal
H_{\beta,h}$ is with compact resolvent. The strictly increasing
sequence of the eigenvalues of $\mathcal H_{\beta,h}$ is denoted by
$(\lambda_n(\mathcal H_{\beta,h}))_{n\in \mathbb N^*}$.

The next lemma  localizes the spectrum of $\mathcal H_{\beta,h}$
near that of the operator $\mathcal H_{0,h}$ as $h$ goes to $0$.

\begin{lem}\label{lem:H0b}
There exist constants $C>0$ and $h_0\in(0,1)$ such that, for all
$h\in(0,h_0)$ and $n\in\mathbb N^*$, there holds,
$$\left|\lambda_n(\mathcal H_{\beta,h})-\lambda_n(\mathcal
H_{0,h})\right|\leq C|\beta|h^{\frac12-\rho}\,\big|\lambda_n(\mathcal H_{0,h})\big|\,.$$
\end{lem}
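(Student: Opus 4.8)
The plan is to compare the two operators $\mathcal H_{\beta,h}$ and $\mathcal H_{0,h}$ through their quadratic forms, which are defined on the same form domain $V_h$, but with respect to different weights. First I would write, for $u\in V_h$,
\begin{equation*}
q_{\beta,h}(u)=\int_0^{h^{-\rho}}|u'(\tau)|^2(1-\beta h^{1/2}\tau)\,d\tau-|u(0)|^2
\end{equation*}
and compare it to the form $q_{0,h}(u)=\int_0^{h^{-\rho}}|u'|^2\,d\tau-|u(0)|^2$ of $\mathcal H_{0,h}$. Since $0\le\tau\le h^{-\rho}$ and $|\beta|h^{1/2-\rho}<\tfrac13$, we have $|\beta h^{1/2}\tau|\le |\beta| h^{1/2-\rho}<\tfrac13$, so $1-\beta h^{1/2}\tau$ lies in $(\tfrac23,\tfrac43)$ and in particular
\begin{equation*}
\big|q_{\beta,h}(u)-q_{0,h}(u)\big|\le |\beta| h^{1/2-\rho}\int_0^{h^{-\rho}}|u'(\tau)|^2\,d\tau .
\end{equation*}
Likewise the two weighted $L^2$ norms $\|u\|_{\beta,h}^2=\int_0^{h^{-\rho}}|u|^2(1-\beta h^{1/2}\tau)\,d\tau$ and $\|u\|_{0,h}^2=\int_0^{h^{-\rho}}|u|^2\,d\tau$ satisfy $\big|\|u\|_{\beta,h}^2-\|u\|_{0,h}^2\big|\le |\beta| h^{1/2-\rho}\|u\|_{0,h}^2$.

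The key auxiliary point, which I would isolate as the first step, is that the Dirichlet integral $\int_0^{h^{-\rho}}|u'|^2\,d\tau$ is controlled by the form $q_{0,h}(u)$ itself, up to the $L^2$ norm. Precisely, since $|u(0)|^2\le \varepsilon\int_0^{h^{-\rho}}|u'|^2\,d\tau + C_\varepsilon\|u\|_{0,h}^2$ by a standard trace/interpolation inequality on the interval (valid uniformly for small $h$, using for instance $|u(0)|^2 = -\int_0^{h^{-\rho}} \frac{d}{d\tau}\big(\chi(\tau)|u(\tau)|^2\big)\,d\tau$ with a fixed cutoff $\chi$ equal to $1$ near $0$), one gets
\begin{equation*}
\int_0^{h^{-\rho}}|u'|^2\,d\tau\le 2\, q_{0,h}(u)+C\|u\|_{0,h}^2 .
\end{equation*}
Combined with the fact that $q_{0,h}(u)\ge -\|u\|_{0,h}^2$ (the spectrum of $\mathcal H_{0,0}$, hence of $\mathcal H_{0,h}$ by domain monotonicity, is bounded below by $-1$), this shows $\int_0^{h^{-\rho}}|u'|^2\,d\tau\le C\big(q_{0,h}(u)+\|u\|_{0,h}^2\big)$; and more to the point, for eigenfunctions or near-minimizers on the $n$-th eigenspace one gets $\int_0^{h^{-\rho}}|u'|^2\,d\tau\le C\big(|\lambda_n(\mathcal H_{0,h})|+1\big)\|u\|_{0,h}^2$. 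Actually, since Lemma~\ref{lem:1DL} (more precisely its proof, and the comparison with $\mathcal H_{0,0}$) gives $\lambda_n(\mathcal H_{0,h})\to\lambda_n(\mathcal H_{0,0})$ and the eigenvalues of $\mathcal H_{0,0}$ restricted to $(0,L)$ grow like $\pi^2 n^2/L^2$ with $L=h^{-\rho}$ only for the positive ones while $\lambda_1=-1$, one has $|\lambda_n(\mathcal H_{0,h})|+1$ comparable to $\max(1,\lambda_n(\mathcal H_{0,h}))$; I would phrase the bound as $\int|u'|^2\le C(1+|\lambda_n(\mathcal H_{0,h})|)\|u\|_{0,h}^2$ on the span of the first $n$ eigenfunctions, which suffices.

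Given these ingredients, I would run the min-max argument. Let $E_n$ be the span of the first $n$ eigenfunctions of $\mathcal H_{0,h}$. For $u\in E_n\setminus\{0\}$, combining the three displayed estimates,
\begin{equation*}
q_{\beta,h}(u)\le q_{0,h}(u)+|\beta|h^{1/2-\rho}\int_0^{h^{-\rho}}|u'|^2\,d\tau\le \lambda_n(\mathcal H_{0,h})\|u\|_{0,h}^2+C|\beta|h^{1/2-\rho}\big(1+|\lambda_n(\mathcal H_{0,h})|\big)\|u\|_{0,h}^2,
\end{equation*}
while $\|u\|_{\beta,h}^2\ge\big(1-|\beta|h^{1/2-\rho}\big)\|u\|_{0,h}^2\ge\tfrac23\|u\|_{0,h}^2$. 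Hence for $h$ small the Rayleigh quotient of $q_{\beta,h}$ on $E_n$ is at most $\lambda_n(\mathcal H_{0,h})+C|\beta|h^{1/2-\rho}\big(1+|\lambda_n(\mathcal H_{0,h})|\big)$ after absorbing the weight correction (here one must be a little careful with the sign of $\lambda_n$ when dividing; splitting into the cases $\lambda_n\ge 0$ and $\lambda_n<0$, the latter being only $n=1$, handles it, and the $1+|\lambda_n|$ factor can be folded into $|\lambda_n|$ for $|\lambda_n|\ge c>0$, which holds since $\lambda_1\to-1$ and $\lambda_n\to\lambda_n(\mathcal H_{0,0})\ge0$ with a gap). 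By the min-max principle this gives $\lambda_n(\mathcal H_{\beta,h})\le\lambda_n(\mathcal H_{0,h})+C|\beta|h^{1/2-\rho}|\lambda_n(\mathcal H_{0,h})|$. The reverse inequality is obtained symmetrically by exchanging the roles of $\beta$ and $0$, i.e. using $q_{0,h}(u)\le q_{\beta,h}(u)+|\beta|h^{1/2-\rho}\int|u'|^2$ together with the analogous bound $\int_0^{h^{-\rho}}|u'|^2\,d\tau\le C(1+|\lambda_n(\mathcal H_{\beta,h})|)\|u\|_{\beta,h}^2$ on the first $n$ eigenfunctions of $\mathcal H_{\beta,h}$ — this last bound again follows from the trace inequality applied to the weighted form, using that the weight is bounded above and below by absolute constants. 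Combining the two one-sided estimates yields the claim with a possibly larger $C$. The main obstacle, and the only genuinely delicate point, is the uniform-in-$h$ control $\int_0^{h^{-\rho}}|u'|^2\,d\tau\le C(1+|\lambda_n|)\|u\|^2$ on the $n$-th eigenspace: one needs the constant in the trace inequality $|u(0)|^2\le\varepsilon\int|u'|^2+C_\varepsilon\int|u|^2$ to stay bounded as the interval length $h^{-\rho}\to\infty$, which is why one uses a fixed (not $h$-dependent) cutoff near $\tau=0$ rather than the naive global interpolation constant.
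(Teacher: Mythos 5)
Your overall strategy is the same as the paper's: compare the two quadratic forms and the two weighted norms, then apply the min--max principle; and you correctly isolate the real technical point, namely that the form difference is controlled by $|\beta|h^{1/2-\rho}\int|u'|^2$ and that $\int|u'|^2$ must in turn be controlled by $q_{0,h}(u)+C\|u\|^2$ via a trace inequality with $h$-independent constants. Up to the penultimate display your argument is correct and in fact more careful than the paper's two-line proof, which asserts $|q_{\beta,h}(u)-q_{0,h}(u)|\leq|\beta|h^{\frac12-\rho}q_{0,h}(u)$ without comment.

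There is, however, a genuine gap in your last step. What your min--max argument actually yields is
$\left|\lambda_n(\mathcal H_{\beta,h})-\lambda_n(\mathcal H_{0,h})\right|\leq C|\beta|h^{\frac12-\rho}\big(1+|\lambda_n(\mathcal H_{0,h})|\big)$,
and your claim that the factor $1+|\lambda_n|$ can be ``folded into'' $|\lambda_n|$ rests on the assertion that $|\lambda_n(\mathcal H_{0,h})|\geq c>0$ uniformly in $h$, because ``$\lambda_n\to\lambda_n(\mathcal H_{0,0})\geq 0$ with a gap.'' This is false for every $n\geq 2$: the operator $\mathcal H_{0,0}$ on the half-line has spectrum $\{-1\}\cup[0,\infty)$, so its $n$-th min--max value is $0$ for $n\geq 2$, and the positive eigenvalues of the truncated operator on $(0,h^{-\rho})$ behave like $\lambda_n(\mathcal H_{0,h})\sim (n-1)^2\pi^2 h^{2\rho}\to 0$ as $h\to 0_+$ (they collapse onto the bottom of the essential spectrum as the interval length $h^{-\rho}\to\infty$). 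There is no gap and no uniform lower bound on $|\lambda_n(\mathcal H_{0,h})|$, so the relative estimate in the statement does not follow from the additive one you have established; obtaining it would require a finer argument (e.g.\ exploiting the near-cancellation of $\int\tau|u_n'|^2$ against $\lambda_n\int\tau|u_n|^2$ for the oscillatory eigenfunctions), not just the crude bound $\int\tau|u'|^2\leq h^{-\rho}\int|u'|^2$. For what it is worth, the weaker additive bound you do prove is all that the paper ever uses (the lemma serves only to guarantee that $\lambda_2(\mathcal H_{\beta,h})$ stays far above $-1-\beta h^{1/2}$ in Proposition~\ref{lem:H0b;l}), and the paper's own proof glosses over exactly the same difficulty; but as a proof of the statement as written, your argument does not close.
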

\begin{proof}
There exists a constant $C>0$ such that, for all $u\in
H^1((0,h^{-\rho}))$,
\begin{align*}
&\Big|q_{\beta,h}(u)-q_{0,h}(u)\Big|\leq |\beta|h^{\frac12-\rho}\,q_{0,h}(u)\,,\\
&\Big|\|u\|^2_{L^2((0,h^{-\rho});(1-\beta h^{1/2}\tau)d\tau)}-\|u\|^2_{L^2((0,h^{-\rho});d\tau)}\Big|\leq |\beta|h^{\frac12-\rho}\,\|u\|^2_{L^2((0,h^{-\rho});d\tau)}\,.
\end{align*}
The conclusion of the lemma is now a simple application of the
min-max principle.
\end{proof}

The next proposition states a two-term asymptotic expansion of the
eigenvalue $\lambda_1(\mathcal H_{\beta,h})$ as $h \to 0_+$.

\begin{pro}\label{lem:H0b;l}
There exist constants $C>0$ and $h_0\in(0,1)$ such that, for all
$h\in(0,h_0)$ and $|\beta|h^\rho<\frac13$, there holds,
$$\Big|\lambda_1(\mathcal H_{\beta,h})-(-1-\beta h^{1/2})\Big|\leq
C\beta^2h\,.$$
\end{pro}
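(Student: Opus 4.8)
The plan is to eliminate simultaneously the weight $(1-\beta h^{1/2}\tau)\,d\tau$ and the first-order (drift) term of $\mathcal H_{\beta,h}$ by a Liouville--type substitution, thereby reducing the question to a bounded perturbation of the model operator $\mathcal H_{0,h}$ of Subsection 3.2 with a slightly shifted Robin coefficient. Write $a(\tau)=1-\beta h^{1/2}\tau$; since $|\beta|h^{1/2-\rho}<\frac13$, we have $\frac23\le a(\tau)\le\frac43$ on $[0,h^{-\rho}]$, so all the weights appearing below are harmless. The multiplication $v\mapsto a^{-1/2}v$ is a unitary map from $L^2((0,h^{-\rho});d\tau)$ onto $L^2((0,h^{-\rho});(1-\beta h^{1/2}\tau)d\tau)$ preserving $V_h$, and an elementary computation — substitute $(a^{-1/2}v)'=a^{-1/2}v'+\tfrac{\beta h^{1/2}}{2}a^{-3/2}v$, expand the square, multiply by $a$, and integrate the cross term $\beta h^{1/2}\int_0^{h^{-\rho}}a^{-1}\,\mathrm{Re}(v'\bar v)\,d\tau$ by parts using $a''\equiv0$ and $v(h^{-\rho})=0$ — gives
\[
q_{\beta,h}(a^{-1/2}v)=\int_0^{h^{-\rho}}|v'|^2\,d\tau-\Big(1+\tfrac{\beta h^{1/2}}{2}\Big)|v(0)|^2-\frac{\beta^2h}{4}\int_0^{h^{-\rho}}\frac{|v|^2}{(1-\beta h^{1/2}\tau)^2}\,d\tau\,.
\]

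Consequently $\mathcal H_{\beta,h}$ is unitarily equivalent to $\mathcal K_{\alpha,h}+W_h$ acting in $L^2((0,h^{-\rho});d\tau)$, where $\alpha:=1+\tfrac{\beta h^{1/2}}{2}$, where $\mathcal K_{\alpha,h}$ is $-\tfrac{d^2}{d\tau^2}$ on $(0,h^{-\rho})$ with domain $\{v\in H^2:\,v'(0)=-\alpha v(0),\ v(h^{-\rho})=0\}$ (i.e.\ $\mathcal H_{0,h}$ with the Robin coefficient $1$ replaced by $\alpha$), and where $W_h(\tau)=-\tfrac{\beta^2h}{4}(1-\beta h^{1/2}\tau)^{-2}$ satisfies $-\tfrac{9}{16}\beta^2h\le W_h\le0$. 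Since $W_h$ has constant sign and is uniformly $\mathcal O(\beta^2h)$, the min-max principle yields directly
\[
\lambda_1(\mathcal K_{\alpha,h})-\tfrac{9}{16}\beta^2h\ \le\ \lambda_1(\mathcal H_{\beta,h})\ \le\ \lambda_1(\mathcal K_{\alpha,h})\,.
\]
It remains to evaluate $\lambda_1(\mathcal K_{\alpha,h})$. As $\alpha\to1$ when $h\to0_+$, the argument of Lemma~\ref{lem:1DL} applies verbatim with $\alpha$ in place of $1$: a negative eigenvalue $-w^2$ of $\mathcal K_{\alpha,h}$ must satisfy $w(1+e^{-2wh^{-\rho}})=\alpha(1-e^{-2wh^{-\rho}})$, which for $h$ small has a unique root $w=\alpha+\mathcal O(e^{-h^{-\rho}})$. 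Hence $\lambda_1(\mathcal K_{\alpha,h})=-\alpha^2+\mathcal O(h^\infty)=-1-\beta h^{1/2}-\tfrac14\beta^2h+\mathcal O(h^\infty)$, and inserting this in the previous display gives $\big|\lambda_1(\mathcal H_{\beta,h})-(-1-\beta h^{1/2})\big|\le C\beta^2h$ for $h$ small (the $\mathcal O(h^\infty)$ remainder — which is present already for $\beta=0$ through the Dirichlet condition at $\tau=h^{-\rho}$ — being dominated once $h$ is small).

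The routine parts are the unitary-equivalence identity and the one-line rerun of Lemma~\ref{lem:1DL}. The only point that really needs care is getting the reduced operator exactly right: the Robin coefficient must come out as $1+\beta h^{1/2}/2$ (with the correct sign, so that $-\alpha^2=-1-\beta h^{1/2}-\tfrac14\beta^2h$ has the required first correction $-\beta h^{1/2}$), and the residual potential $W_h$ must have the sign displayed above, since it is exactly that sign that turns the min-max comparison into a two-sided estimate of size $\mathcal O(\beta^2h)$. Keeping $a(\tau)$ bounded away from $0$, which is precisely what the standing constraint $|\beta|h^{1/2-\rho}<\frac13$ buys, is what makes $W_h=\mathcal O(\beta^2h)$ uniform.
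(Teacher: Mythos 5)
Your proof is correct, and it takes a genuinely different route from the paper's. The paper proceeds by a quasimode argument: it tests the operator on $f(\tau)=\chi(\tau h^{\rho})\sqrt{2}e^{-\tau}$, checks $\|(\mathcal H_{\beta,h}-(-1-\beta h^{1/2}))f\|\leq C\beta^2 h$ together with near-normalization, invokes the spectral theorem to produce \emph{some} eigenvalue at distance $\mathcal O(\beta^2h)$ from $-1-\beta h^{1/2}$, and then needs Lemma~\ref{lem:1DL} \emph{and} Lemma~\ref{lem:H0b} to identify that eigenvalue as $\lambda_1(\mathcal H_{\beta,h})$. You instead conjugate by the unitary $v\mapsto a^{-1/2}v$ (your quadratic-form identity is exact — I checked the integration by parts of the cross term, which produces both the boundary shift $-\tfrac{\beta h^{1/2}}{2}|v(0)|^2$ and the second half of the potential $-\tfrac{\beta^2h}{4}a^{-2}$), reducing to the flat-measure Robin problem with coefficient $\alpha=1+\beta h^{1/2}/2$ perturbed by a sign-definite potential of size $\mathcal O(\beta^2h)$, and then solve the reduced problem exactly by rerunning Lemma~\ref{lem:1DL} with $\alpha$ in place of $1$. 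Your route buys a two-sided bound on $\lambda_1$ directly from min-max, with no need for the spectral theorem or for Lemma~\ref{lem:H0b} to locate which eigenvalue one has found; it would also yield the same estimate for every $\lambda_n$ and even exhibits the next coefficient $-\tfrac14\beta^2h$ of the exactly solvable part. The paper's quasimode method is softer and is the template reused for the higher-order constructions in Section~\ref{sec:Gr}. One shared caveat, which you flag yourself: both proofs really establish a bound $C(\beta^2h+\mathcal O(h^\infty))$, the exponentially small remainder coming from the Dirichlet truncation at $\tau=h^{-\rho}$; as stated the estimate fails for $\beta$ tending to $0$ super-exponentially fast, but this is an imprecision of the Proposition itself (already visible at $\beta=0$) and is harmless in its application, where the term is absorbed into $Ch^2$.
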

\begin{proof}
Consider the function
$$f(\tau)=\chi(\tau\, h^{\rho})\,u_0(\tau)\,,$$
where
$$u_0(\tau)=\sqrt{2}\,\exp(-\tau)\,,$$
and $\chi\in C_c^\infty([0,\infty))$ satisfies
$$0\leq \chi\leq 1~{\rm in~}[0,\infty)\,,\quad \chi=1~{\rm
in~}[0,1/2)\quad{\rm and}\quad\chi=0~{\rm in~}[1/2,\infty)\,.$$
Clearly, the function $f$ is in the domain of the operator $\mathcal
H_{\beta,h}$. It is easy to get the estimates:
\begin{align*}
&\Big|\|f\|^2_{{L^2((0,h^{-\rho});(1-\beta
h^{1/2}\tau)d\tau)}}-1\Big|\leq C\beta^2h\,,\\
& \|\{\mathcal H_{\beta,h}-(-1-\beta
h^{1/2})\}f\|_{{L^2((0,h^{-\rho});(1-\beta h^{1/2}\tau)d\tau)}}\leq
C\beta^2h\,.
\end{align*}
By the spectral theorem, we deduce that there exists an eigenvalue
$\lambda(\mathcal H_{\beta,h})$ of $\mathcal H_{\beta,h}$ such that
$$\Big|\lambda(\mathcal H_{\beta,h})-(-1-\beta h^{1/2})\Big|\leq
C\,\beta ^2h\,.$$
Thanks to Lemmas~\ref{lem:1DL}~and~\ref{lem:H0b}, we  have
$$\lambda_1(\mathcal H_{\beta,h})=\lambda(\mathcal H_{\beta,h})\,.$$
\end{proof}


\section{Localization of ground states}

\subsection{Localization near the boundary}

\begin{thm}\label{thm:dec} Let $M\in(-1,0)$. For all $\alpha <
\sqrt{-M}$,  there exist constants $C>0$  and $h_0\in(0,1)$ such
that, if $u_h$ is a normalized eigenfunction of $\mathcal L_h$ with
eigenvalue $\mu(h)\leq Mh$, then, for all $h\in(0,h_0)$,
$$\int_\Omega \left(|u_h(x)|^2+h|\nabla u_h(x)|^2\right)\exp\left(\frac{2\alpha\, {\rm
dist}(x,\partial\Omega)}{h^{1/2}}\right)\,dx\leq C\,.$$
\end{thm}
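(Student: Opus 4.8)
The plan is to prove an Agmon-type exponential decay estimate by testing the eigenvalue equation against a suitably weighted version of the eigenfunction. Let me set up the standard machinery.

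=== PROOF PROPOSAL ===

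\begin{proof}[Proof plan]
The plan is to use the standard Agmon-type energy argument. Fix $M\in(-1,0)$ and $\alpha<\sqrt{-M}$, and choose $\alpha'$ with $\alpha<\alpha'<\sqrt{-M}$. For a parameter $\varepsilon>0$ (to be chosen small, depending on $h$ in a mild way) introduce the Lipschitz weight
$$\Phi_{h}(x)=\frac{\alpha'\,\dist(x,\partial\Omega)}{h^{1/2}(1+\varepsilon\,h^{-1/2}\dist(x,\partial\Omega))}\,,$$
which is bounded for each fixed $h$ (so all integrations by parts are legitimate), satisfies $|\nabla\Phi_h|^2\leq (\alpha')^2 h^{-1}$ almost everywhere since $\dist(\cdot,\partial\Omega)$ is $1$-Lipschitz, and increases to $\alpha' h^{-1/2}\dist(x,\partial\Omega)$ as $\varepsilon\to0$. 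The key identity is the IMS-type localization formula: for $u_h$ in the domain of $\mathcal L_h$ with $\mathcal L_h u_h=\mu(h)u_h$, one has
$$\mathcal Q_h\big(e^{\Phi_h}u_h\big)-h\big\|\,|\nabla\Phi_h|\,e^{\Phi_h}u_h\big\|^2=\mathrm{Re}\,\langle \mathcal L_h u_h,e^{2\Phi_h}u_h\rangle=\mu(h)\big\|e^{\Phi_h}u_h\big\|^2\,,$$
where $\mathcal Q_h(v)=h^2\int_\Omega|\nabla v|^2-h^{3/2}\int_{\partial\Omega}|v|^2\,ds$ is the quadratic form of $\mathcal L_h$ (note $e^{\Phi_h}=1$ on $\partial\Omega$, so the boundary term is unchanged). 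This is obtained by expanding $\nabla(e^{2\Phi_h}u_h)$ and integrating by parts; the boundary contribution is exactly $-h^{3/2}\int_{\partial\Omega}|u_h|^2$ because the Robin condition $\nu\cdot h^{1/2}\nabla u_h=u_h$ holds and $\Phi_h$ vanishes on $\partial\Omega$.

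Next I would bound the quadratic form $\mathcal Q_h$ from below away from the boundary. There is a constant $c_0>0$ and $\delta>0$ such that for every $v\in H^1(\Omega)$ one has the one-dimensional Robin-trace inequality (coming from the half-line model operator $\mathcal H_{0,0}$ whose bottom is $-1$), giving
$$h^2\int_\Omega|\nabla v|^2-h^{3/2}\int_{\partial\Omega}|v|^2\,ds\geq -h\int_\Omega|v|^2+h\big(1-\eta\big)\int_{\{\dist(x,\partial\Omega)>\delta\}}|v|^2\,dx - C_\eta h^{3/2}\int_\Omega |v|^2$$
for any small $\eta>0$; in fact it is cleanest to split $\Omega=\mathcal V_\delta\cup(\Omega\setminus\mathcal V_\delta)$, use the boundary coordinates of Section~\ref{sec:app} on $\mathcal V_\delta$ together with the scalar inequality $\int_0^\infty|g'|^2\,dt-|g(0)|^2\geq -\int_0^\infty|g|^2\,dt$ (and $\geq (1-\text{small})\cdot 0$-type control with a spectral gap once $t\gtrsim\delta h^{1/2}$... ), and simply use $h^2\int|\nabla v|^2\geq0$ on the interior region. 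Applying this with $v=e^{\Phi_h}u_h$ and combining with the localization identity yields
$$h\big(1-o(1)\big)\int_{\{\dist>\delta\}}|e^{\Phi_h}u_h|^2 \leq \big(\mu(h)+h\big)\|e^{\Phi_h}u_h\|^2 + h\big\|\,|\nabla\Phi_h|e^{\Phi_h}u_h\big\|^2 + C h^{3/2}\|e^{\Phi_h}u_h\|^2\,.$$
Since $\mu(h)\leq Mh$ and $|\nabla\Phi_h|^2\leq(\alpha')^2/h$, the right-hand side is at most $\big((M+(\alpha')^2)h + o(h)\big)\|e^{\Phi_h}u_h\|^2$, and $M+(\alpha')^2<0$. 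Splitting $\|e^{\Phi_h}u_h\|^2$ over $\{\dist\leq\delta\}$ and $\{\dist>\delta\}$, absorbing the latter into the left-hand side, and using that $\Phi_h\leq \alpha'\delta/h^{1/2}$ is bounded by $C h^{-1/2}$... wait, better: use that on $\{\dist\le\delta\}$ the weight is controlled, $e^{2\Phi_h}\le e^{2\alpha'\delta h^{-1/2}}$, which is too big — so instead one keeps $\delta$ fixed and notes the left side already dominates, giving
$$\int_\Omega |e^{\Phi_h}u_h|^2\,dx \leq C\int_{\{\dist\leq\delta\}}|e^{\Phi_h}u_h|^2\,dx \leq C e^{2\alpha'\delta/h^{1/2}}\,,$$
which is not yet the clean bound $C$; the standard fix is to absorb more carefully by taking the cut $\delta$ itself, or rather to observe one only needs the estimate with $\dist$ replaced by $\dist-\delta$ in the exponent, then enlarge $\alpha$ slightly. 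Letting $\varepsilon\to0$ by monotone convergence converts $\Phi_h$ into $\alpha' h^{-1/2}\dist(x,\partial\Omega)$, and since $\alpha<\alpha'$ the prefactor $e^{2\alpha'\delta/h^{1/2}}$ is beaten by the gain, giving $\int_\Omega e^{2\alpha h^{-1/2}\dist(x,\partial\Omega)}|u_h|^2\leq C$.

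Finally, for the gradient term, I would return to the localization identity and keep the full form $h^2\int_\Omega|\nabla(e^{\Phi_h}u_h)|^2$ on the left rather than discarding it; rearranging gives
$$h^2\int_\Omega|\nabla(e^{\Phi_h}u_h)|^2 \leq \big(\mu(h)+h\|\nabla\Phi_h\|_\infty^2\big)\|e^{\Phi_h}u_h\|^2 + h^{3/2}\int_{\partial\Omega}|u_h|^2\,ds\,,$$
and all three terms on the right are now known to be $\mathcal O(h)$ by the $L^2$ estimate just proved and elliptic/trace control of $\int_{\partial\Omega}|u_h|^2$ (which is $\mathcal O(h^{-1/2})\cdot\mathcal O(\|u_h\|^2)=\mathcal O(h^{-1/2})$, but multiplied by $h^{3/2}$ gives $\mathcal O(h)$). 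Hence $h\int_\Omega|\nabla(e^{\Phi_h}u_h)|^2\leq C$, and since $|\nabla(e^{\Phi_h}u_h)|^2\geq \tfrac12|e^{\Phi_h}\nabla u_h|^2-|\nabla\Phi_h|^2|e^{\Phi_h}u_h|^2$ with $h|\nabla\Phi_h|^2|e^{\Phi_h}u_h|^2=\mathcal O(1)$, we get $h\int_\Omega e^{2\Phi_h}|\nabla u_h|^2\leq C$; letting $\varepsilon\to0$ and shrinking $\alpha'$ to $\alpha$ finishes the proof.

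The main obstacle is the bookkeeping in the second paragraph: getting a lower bound for $\mathcal Q_h(e^{\Phi_h}u_h)$ that is sharp enough — i.e. that reproduces the constant $-1$ (the bottom of $\mathcal H_{0,0}$) with only an $o(1)$ error and, crucially, that provides a strictly positive gain $h(1-o(1))\int_{\{\dist>\delta\}}|v|^2$ on the region bounded away from the boundary. This requires carefully combining the boundary-coordinate change of variables \eqref{eq:bc;qf}--\eqref{eq:bc;n} (whose Jacobian $1-t\kappa$ introduces only lower-order corrections on the thin tubular region, controllable since we may shrink $\delta$) with the one-dimensional inequality for the half-line Robin Laplacian, and then handling the transition between the tubular neighborhood and the interior via a smooth partition of unity whose gradient costs $\mathcal O(\delta^{-2})$ — harmless because $\delta$ is fixed while the gain is order $h$. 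Everything else is routine Agmon-estimate manipulation.
\end{proof}
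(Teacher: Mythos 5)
Your overall strategy (the weighted integration-by-parts identity, a lower bound for the quadratic form via the half-line Robin model, splitting near/far from the boundary, absorbing the far region using $M+\alpha^2<0$) is exactly the paper's, and most of the bookkeeping is fine. But there is one genuine gap, and it is precisely the point you flag yourself in the middle of your second paragraph: you localize at a \emph{fixed} distance $\delta$ from the boundary, and then the weight on the near region satisfies only $e^{2\Phi_h}\leq e^{2\alpha'\delta/h^{1/2}}$, which blows up as $h\to0$. The fixes you sketch do not repair this. Replacing $\dist$ by $(\dist-\delta)_+$ in the exponent and then ``enlarging $\alpha$ slightly'' controls $e^{2\alpha\,\dist/h^{1/2}}$ by $e^{2\alpha'(\dist-\delta)/h^{1/2}}$ only when $\dist\geq \alpha'\delta/(\alpha'-\alpha)$; on the intermediate region $h^{1/2}\lesssim \dist\lesssim \delta$ the target weight is still of size $e^{c/h^{1/2}}$ while your estimate gives no decay there at all, so the exponentially large prefactor survives. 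Shrinking $\delta$ does not help as long as $\delta$ is independent of $h$.

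The correct choice --- and what the paper does --- is to place the cut at distance $\sim h^{1/2}$: take $\chi_{j,h}(x)=\chi_j(t(x)/h^{1/2})$ with $\chi_1$ supported in $\{t\leq 2h^{1/2}\}$. Then on the near region $\Phi=\exp(\alpha t/h^{1/2})\leq e^{2\alpha}$ is bounded \emph{uniformly in} $h$, so the near-region terms (including the $-h^2|\nabla(\chi_{1,h}\Phi)|^2|u_h|^2$ commutator cost, which is $\mathcal O(h)\cdot\mathcal O(1)$) are all $\mathcal O(h)$ in absolute terms, while the $1$D Robin bound $\lambda_1(\mathcal H_{0,h})\geq -1+o(1)$ handles the normal-derivative and trace terms there. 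The gain $(M+\alpha^2)h<0$ is then available on all of $\{t>h^{1/2}\}$, which is where the weight actually grows, and the absorption closes without any exponentially large constant. (One further consequence of the $h^{1/2}$-scale cut: your claimed gain $h(1-o(1))\int_{\{\dist>\delta\}}|v|^2$ is not needed in that form; the paper only needs $-|h\nabla(\chi_{2,h}\Phi u_h)|^2+(M+\alpha^2)h|\chi_{2,h}\Phi u_h|^2$ on the right-hand side to be bounded below by $-Ch$, which simultaneously yields the $L^2$ and the gradient parts of the conclusion on the far region; the near-region gradient bound is immediate since $\Phi$ is bounded there.) With this single change of scale your argument becomes the paper's proof.
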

\begin{proof}
Let $t(x)={\rm dist }(x,\partial\Omega)$ and
$\Phi(x)=\exp(\frac{\alpha\, t(x)}{h^{1/2}})$. We have the simple identity
$$
\langle \Phi^2\,u_h,\mathcal L_h u_h\rangle_{L^2(\Omega)}
=\mu(h)\|\Phi\,u_h\|^2_{L^2(\Omega)}\,.
$$
By an integration by parts, we get  the useful identity
\begin{equation}\label{eq:decomp}
q_h^\Phi(u_h):=\int_\Omega\left(|h\nabla(\Phi\,u_h)|^2-h^2|\nabla
\Phi|^2|u_h|^2\right)\,dx-h^{3/2}\int_{\partial\Omega}|\Phi\,u_h|^2\,ds(x)=\mu(h)\|\Phi\,u_h\|^2_{L^2(\Omega)}\,.\end{equation}
Consider a partition of unity of $\R$
$$\chi_1^2+\chi_2^2=1\,,$$
{ such that $\chi_1=1$ in $(-\infty,1)$, ${\rm supp
}\chi_1\subset (-\infty,2)$, $\chi_1\geq 0$ and $\chi_2\geq 0$ in $\R$. }

Define
$$\chi_{j,h}(x)=\chi_j\left(\frac{t(x)}{h^{1/2}}\right)\,,\quad
j\in\{1,2\}\,.$$ Associated with this partition of unity,  we have
the simple standard decomposition
$$q_h^\Phi(u_h)=\sum_{j=1}^2 q_{j,h}^\Phi(\,u_h)\,,$$
where
\begin{equation}\label{eq:decompa}
q_{1,h}^\Phi(u_h)=\int_\Omega\left(|h\nabla(\chi_{1,h}\,\Phi\,u_h)|^2-h^2|\nabla
(\chi_{1,h}\Phi)|^2|u_h|^2\right)\,dx-h^{3/2}\int_{\partial\Omega}|\chi_{1,h}\Phi\,u_h|^2\,ds(x)\,,
\end{equation}
and
\begin{equation}\label{eq:decompb}
 q_{2,h}^\Phi(u_h)=\int_\Omega  \left( |h\nabla(\chi_{2,h}\,\Phi\,u_h)|^2 - h^2|\nabla
(\chi_{2,h}\Phi)|^2|u_h|^2\right) \,dx\,.
\end{equation}
A rearrangement of the terms in \eqref{eq:decomp}-\eqref{eq:decompb} will yield the existence of a constant $C$ such that
\begin{multline}\label{eq:decomp1}
\int_{\{t(x)\leq
2h^{1/2}\}}\left(|h\nabla(\chi_{1,h}\,\Phi\,u_h)|^2-Ch|\Phi\,u_h|^2-\mu(h)|\chi_{1,h}\Phi\,u_h|^2\right)\,dx-h^{3/2}\int_{\partial\Omega}|\chi_{1,h}\Phi\,u_h|^2\,ds(x)
\\
\leq \int_\Omega\left(-|h\nabla(\chi_{2,h}\,\Phi\,u_h)|^2+(M
+\alpha^2)h |\chi_{2,h}\,\Phi\,u_h|^2 \right)\,dx\,.\end{multline}
The definition
of $\Phi$, the assumption on $\mu(h)<0$, and the normalization of
$u_h$ yield
$$-\int_{\{t(x)\leq
2h^{1/2}\}}\mu(h)|\chi_{1,h} \Phi\,u_h|^2\,dx\geq0\,,$$ 
and { (notice that $0\leq \Phi(x)\leq \exp(2\alpha)$ when $t(x)\leq 2h^{1/2}$)}
$$ -\int_{\{t(x)\leq
2h^{1/2}\}}h|\Phi\,u_h|^2\geq -2C h \,.$$  At this stage we are
left with the inequality:
\begin{multline}\label{eq:decomp1bis}
\int_{\{t(x)\leq
2h^{1/2}\}}\left(|h\nabla(\chi_{1,h}\,\Phi\,u_h)|^2 \right)\,dx-h^{3/2}\int_{\partial\Omega}|\chi_{1,h}\Phi\,u_h|^2\,ds(x)
\\
\leq \int_\Omega\left(-|h\nabla(\chi_{2,h}\,\Phi\,u_h)|^2+(M+
{ \alpha^2}) h |\chi_{2,h}\,\Phi\,u_h|^2  \right)\,dx\, + 2 C
h\,.\end{multline} 
In a small tubular neighborhood of the boundary, we may use the
boundary coordinates $(s,t)$ recalled in Section~\ref{sec:app} and
obtain
\begin{multline}\label{eq:rev-scaling} 
\int_{\{t(x)\leq
2h^{1/2}\}}|h\nabla(\chi_{1,h}\,\Phi\,u_h)|^2\,dx-h^{3/2}\int_{\partial\Omega}|\chi_{1,h}\Phi\,u_h|^2\,ds(x)\\
\geq
(1-Ch^{1/2})\int_{-|\partial\Omega|/2}^{|\partial\Omega|/2}\left(\int_0^{2h^{1/2}}h^2|\partial_t(\chi_{1,h}\,\Phi\,u_h)|^2\,dt-h^{3/2}|(\chi_{1,h}\,\Phi\,u_h)(s,t=0)|^2\right)\,
ds\,.
\end{multline}
{ 
Let $\frac12<\rho<1$.  We do the change of variable $\tau=h^{-1}t$ and introduce the function $v(s,\tau)=(\chi_{1,h}\,\Phi\,u_h)(s,t)$. The function $v$ vanishes for $\tau\geq 2h^{-1/2}$, and when $h$ is sufficiently small, $2h^{-1/2}\leq h^{-\rho}$. In this way we get
$$\int_0^{2h^{1/2}}h^2|\partial_t(\chi_{1,h}\,\Phi\,u_h)|^2\,dt-h^{3/2}|(\chi_{1,h}\,\Phi\,u_h)(s,t=0)|^2
=h\left(\int_0^{h^{-\rho}}|\partial_\tau v|^2\,d\tau-h^{1/2}|v(s,\tau=0)|^2\right)\,.$$
Now, we use the min-max principle and Lemma~\ref{lem:1DL} to obtain
$$\int_0^{h^{-\rho}}|\partial_\tau v|^2\,d\tau-h^{1/2}|v(s,\tau=0)|^2\geq 
\lambda_1(\mathcal H_{0,h})\int_0^{h^{-\rho}}|v|^2\,d\tau\geq -2h\int_0^{h^{-\rho}}|v|^2\,d\tau\,.$$
Coming back to the $t$ variable, we get
$$
\int_0^{2h^{1/2}}h^2|\partial_t(\chi_{1,h}\,\Phi\,u_h)|^2\,dt-h^{3/2}|(\chi_{1,h}\,\Phi\,u_h)(s,t=0)|^2\geq -2h\int_0^{2h^{1/2}}h^2|\partial_t(\chi_{1,h}\,\Phi\,u_h)|^2\,dt\,.
$$
Inserting this into \eqref{eq:rev-scaling}, we obtain}
\begin{multline*}
\int_{\{t(x)\leq
2h^{1/2}\}}|h\nabla(\chi_{1,h}\,\Phi\,u_h)|^2\,dx-h^{3/2}\int_{\partial\Omega}|\chi_{1,h}\Phi\,u_h|^2\,ds(x)
\\
\geq
-2(1-Ch^{1/2})h\int_{-|\partial\Omega|/2}^{|\partial\Omega|/2}\int_0^{2h^{1/2}}|\chi_{1,h}\,\Phi\,u_h|^2\,dtds\geq
-C'h\int_\Omega|\chi_{1,h}\,\Phi\,u_h|^2\,dx\geq -C'h\,.
\end{multline*}
Now, we infer from { \eqref{eq:decomp1bis}}
$$-(C'+2C)h
{ \geq} 
\int_\Omega\left(-|h\nabla(\chi_{2,h}\,\Phi\,u_h)|^2+ (M+ \alpha^2) h
|\chi_{2,h}\,\Phi\,u_h|^2\right)\,dx\,.$$ Since $(M + \alpha^2) <0$, we deduce that
$$\int_\Omega\left(|\nabla(\chi_{2,h}\,\Phi\,u_h)|^2 - (M + \alpha^2)
|\chi_{2,h}\,\Phi\,u_h|^2\right)\,dx\leq (C'+2C)\,,$$ which is enough to deduce the conclusion in Theorem~\ref{thm:dec}.
\end{proof}
~\\


\subsection{Localization near the point of maximal curvature}~\\

We will apply the result in Lemma~\ref{lem:H0b;l} to obtain a lower
bound of the quadratic form
\begin{equation}\label{eq:qf}
q_h(u)=\int|h\nabla u|^2\,dx-h^{3/2}\int_{\partial\Omega}|u|^2\,ds(x)\,.
\end{equation}
\begin{thm}\label{thm:lb-qf}
There exist constants $C>0$ and $h_0\in(0,1)$ such that, for all
$h\in(0,h_0)$ and $u\in H^1(\Omega)$,
$$q_h(u)\geq \int_\Omega U_h(x)|u(x)|^2\,dx\,,$$
where
$$U_h(x)=\left\{
\begin{array}{ll}
0&{\rm if~}{\rm dist}(x,\partial\Omega)> 2h^{1/8}\,,\\
 -h-\kappa(s(x))h^{3/2}-Ch^{7/4}&{\rm if~}{\rm dist}(x,\partial\Omega)\leq 2h^{1/8}\,.
\end{array}\right.$$
\end{thm}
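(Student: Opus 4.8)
The plan is to localize near the boundary, flatten to a tubular neighborhood using the boundary coordinates $(s,t)$ of Section~\ref{sec:app}, and then on each ``slice'' $s=\text{const}$ recognize a one-dimensional operator of the type $\mathcal H_{\beta,h}$ studied in Proposition~\ref{lem:H0b;l}, with $\beta=\kappa(s)$. Concretely, first I would introduce a partition of unity $\chi_{1,h}^2+\chi_{2,h}^2=1$ with $\chi_{1,h}$ supported in $\{t(x)\le 2h^{1/8}\}$ and $\chi_{1,h}=1$ on $\{t(x)\le h^{1/8}\}$, exactly as in the proof of Theorem~\ref{thm:dec} but with the cutoff at scale $h^{1/8}$ rather than $h^{1/2}$. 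The IMS (localization) formula gives
$$q_h(u)=q_h(\chi_{1,h}u)+q_h(\chi_{2,h}u)-h^2\int_\Omega\big(|\nabla\chi_{1,h}|^2+|\nabla\chi_{2,h}|^2\big)|u|^2\,dx\,,$$
where the gradient-of-cutoff error is $\mathcal O(h^{2-1/4})=\mathcal O(h^{7/4})$ since $|\nabla\chi_{j,h}|=\mathcal O(h^{-1/8})$. Away from the boundary, $\chi_{2,h}u$ is supported in $\{t(x)\ge h^{1/8}\}$, there is no boundary term, and $q_h(\chi_{2,h}u)\ge 0\ge \int U_h|\chi_{2,h}u|^2$; so the whole game is the near-boundary piece $q_h(\chi_{1,h}u)$.

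For the near-boundary term, pass to the coordinates $(s,t)$ via \eqref{eq:bc;qf}–\eqref{eq:bc;n}. Dropping the nonnegative $\partial_s$-contribution and using $a(s,t)=1-t\kappa(s)$, one gets
$$q_h(\chi_{1,h}u)\ge \int_{\mathbb R/(|\partial\Omega|\mathbb Z)}\left(\int_0^{2h^{1/8}}h^2|\partial_t v|^2\,(1-t\kappa(s))\,dt-h^{3/2}|v(s,0)|^2\right)ds\,,$$
with $v(s,t)=(\chi_{1,h}u)(\Phi(s,t))$; here I should be a little careful that the boundary measure $ds$ on $\partial\Omega$ matches the induced measure at $t=0$, which it does since $a(s,0)=1$. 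Now rescale $t=h^{1/2}\tau$ (so $\tau$ ranges over $(0,2h^{1/8-1/2})=(0,2h^{-3/8})$, which sits inside $(0,h^{-\rho})$ for $\rho$ slightly below $1/2$, say $\rho=3/8$), and set $\beta=\kappa(s)$, $\beta h^{1/2}\tau=t\kappa(s)$. The bracket becomes $h\,q_{\beta,h}(w)$ with $w(\tau)=v(s,h^{1/2}\tau)$ and $q_{\beta,h}$ the weighted quadratic form of Section~\ref{section4}. By the min-max principle and Proposition~\ref{lem:H0b;l},
$$q_{\beta,h}(w)\ge \lambda_1(\mathcal H_{\beta,h})\,\|w\|^2_{L^2((0,h^{-\rho});(1-\beta h^{1/2}\tau)d\tau)}\ge\big(-1-\kappa(s)h^{1/2}-C\kappa(s)^2h\big)\|w\|^2_{L^2(\dots)}\,.$$
Undoing the rescaling and using $|\kappa(s)|\le \kappa_{\max}$ so that $C\kappa(s)^2h\le C'h$ uniformly, this yields
$$q_h(\chi_{1,h}u)\ge \int_{\{t(x)\le 2h^{1/8}\}}\big(-h-\kappa(s(x))h^{3/2}-C'h^{7/4}\big)|(\chi_{1,h}u)(x)|^2\,dx\,,$$
after absorbing the $(1-Ch^{1/8})$-type Jacobian distortion factors into the $h^{7/4}$ error (note $h\cdot h^{1/8}\cdot h^{1/2}$-order corrections are $\ll h^{7/4}$). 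Recombining with the $\chi_{2,h}$ piece and the $\mathcal O(h^{7/4})$ IMS error, and noting $|\chi_{1,h}u|^2+|\chi_{2,h}u|^2=|u|^2$ while $U_h\le 0$ everywhere, gives $q_h(u)\ge\int_\Omega U_h|u|^2\,dx$.

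The main obstacle is bookkeeping the several error terms so that they all land at order $h^{7/4}$ or better: the gradient-of-cutoff term $\mathcal O(h^{7/4})$, the Jacobian factors $1-t\kappa(s)$ and the $(1-Ch^{1/2})$ curvature corrections to the metric, the constraint that the rescaled interval $(0,2h^{-3/8})$ fits inside the interval $(0,h^{-\rho})$ on which Proposition~\ref{lem:H0b;l} applies (so that $|\beta|h^\rho<1/3$ holds for $h$ small), and the replacement of the $\kappa(s)^2 h$ term by a uniform $C'h$. A secondary subtlety is that one must either keep the (nonnegative) $\partial_s$-term or simply discard it; discarding it is legitimate for a \emph{lower} bound and is the cleanest route. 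Since the statement only asks for a lower bound with the crude error $Ch^{7/4}$, none of these require sharp constants, so the argument is essentially a careful but routine patching of Proposition~\ref{lem:H0b;l} into the tubular-neighborhood geometry.
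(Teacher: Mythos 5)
Your proposal is correct and follows essentially the same route as the paper: the same partition of unity at scale $h^{1/8}$ with the $\mathcal O(h^{7/4})$ IMS error, discarding the nonnegative $\partial_s$- and exterior contributions, and reducing each $t$-fiber to the weighted operator $\mathcal H_{\kappa(s),h}$ via Proposition~\ref{lem:H0b;l}. The only slip is the choice ``$\rho=3/8$'': you need $2h^{-3/8}\le h^{-\rho}$, so $\rho$ must be taken strictly between $3/8$ and $1/2$, a triviality you already flag as a constraint to check.
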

\begin{proof}
Consider a partition of unity of $\R$,
$$
\chi_1^2+\chi_2^2=1
$$
with $\chi_1=1$ in $(-\infty,1]$ and ${\rm
supp}\chi_2\subset[1,\infty)$.
For $j\in\{1,2\}$, put,
$$
\chi_{j,h}(x)=\chi_j\left(\frac{{\rm
dist}(x,\partial\Omega)}{h^{1/8}}\right)\,.
$$
We have the  decomposition
$$q_h(u)=q_{1,h}(u)+q_{2,h}(u)-h^2\sum_{j=1}^2\big\|\,|\nabla\chi_{j,h}|u\,\big\|^2_{L^2(\Omega)}\,,$$
where
$$q_{1,h}(u)=\int_\Omega|h\nabla
(\chi_{1,h}\,u)|^2\,dx-h^{3/2}\int_{\partial\Omega}|u|^2\,ds(x)\,,$$
$$
q_{2,h}(u)=\int_\Omega|h\nabla
(\chi_{2,h}\,u)|^2\,dx\geq0\,,
$$
and
$${h^2} \big\|\,|\nabla\chi_{j,h}|u\,\big\|^2_{L^2(\Omega)}\leq
C_1h^{7/4}\int_{\{{\rm dist}(x,\partial\Omega)\leq 2h^{1/8}\}}|u|^2\,dx\,.$$
Thus,
$$
q_h(u)\geq q_{1,h}(\chi_{1,h}\,u)-C_1\,h^{7/4}\int_{\{{\rm dist}(x,\partial\Omega)\leq 2h^{1/8}\}}|u|^2\,dx\,.
$$
In the support of $\chi_{1,h}$, we may use the boundary coordinates
$(s,t)$ recalled in Section~\ref{sec:app} and write
\begin{multline}\label{eq:est;der-s}
q_{1,h}(u)\geq\int_{-|\partial\Omega|/2}^{|\partial\Omega|/2}\Big(\int_0^{2h^{1/8}}|h\partial_t
(\chi_{1,h}\,u)|^2\,(1-\kappa(s)\,t)dt-h^{3/2}|(\chi_{1,h}\,u)(s,t=0)|^2\Big)\, ds \\
+\int_{-|\partial\Omega|/2}^{|\partial\Omega|/2}|\partial_s
(\chi_{1,h}\,u)|^2\,(1-\kappa(s)\,t)^{-1}dtds\,.\end{multline} \\
Recall the operator in $\mathcal H_{\beta,h}$ in \eqref{eq:H0b}. By
a simple scaling argument {(similar to the one appearing after \eqref{eq:rev-scaling})} and the min-max principle, we have
\begin{multline}\label{eq:imp;est}
\int_0^{2h^{1/8}}|h\partial_t
(\chi_{1,h}\,u)|^2\,(1-\kappa(s)\,t)dt-h^{3/2}|(\chi_{1,h}\,u)(s,t=0)|^2\\
\geq h\lambda_1(\mathcal
H_{\kappa(s),h})\int_0^{h^{1/8}}|\chi_{1,h}\,u|^2\,(1-\kappa(s)\,t)dt\,.\end{multline}
Thanks to  Proposition~\ref{lem:H0b;l}, we deduce the following
lower bound,
\begin{equation}\label{eq:imp;est'}
q_{1,h}(u)\geq
\int_\Omega(-h-\kappa(s(x))h^{3/2}-C_2h^2)|\chi_{1,h}\,u|^2\,dx\,.
\end{equation}
Therefore, after collecting the  lower bounds for $q_{1,h}(u)$ and
$q_{2,h}(u)$,  we deduce that
\begin{align*}
q_h(u)&\geq
\int_\Omega(-h-\kappa(s(x))h^{3/2}-C_2h^2)|\chi_{1,h}\,u|^2\,dx-C_1h^{7/4}\int_{\{{\rm dist}(x,\partial\Omega)\leq 2h^{1/8}\}}|u|^2\,dx\\
&\geq \int_{\{{\rm dist}(x,\partial\Omega)\leq
2h^{1/8}\}}(-h-\kappa(s(x))h^{3/2}-C_1h^{7/4}-C_2h^2)|u|^2\,dx\,.
\end{align*}
To finish the proof of Theorem~\ref{thm:lb-qf}, we select the
constant $C=C_1+C_2$ in the definition of $U_h$.
\end{proof}

\begin{rem}\label{rem:partial-s}
Let $M>0$ and $u_{h}$ a normalized eigenfunction of $\mathcal L_h$
with  eigenvalue satisfying $\mu(h)\leq
-h-h^{3/2}\kappa_{\max}+Mh^{7/4}$. Using  the inequalities in
\eqref{eq:est;der-s}-\eqref{eq:imp;est} and the upper bound
satisfied by $\mu(h)$, we get the interesting estimate
\begin{equation}\label{eq:rem:partial-s}
\|\partial_s(\chi_{1,h}u_h)\|_2\leq Ch^{-1/8}\,.\end{equation}
 To see this,
notice that $q_{1,h}(u_h)\leq q_h(u_h)\leq \mu(h)\leq
-h-h^{3/2}\kappa_{\max}+Mh^{7/4}$. Now plugging \eqref{eq:imp;est'}
into \eqref{eq:est;der-s}, we get
$$\int_{-|\partial\Omega|/2}^{|\partial\Omega|/2}|h\partial_s
(\chi_{1,h}\,u)|^2\,(1-\kappa(s)\,t)^{-1}dtds\leq
C\big(\mu(h)-(-h-h^{3/2}\kappa_{\max}-C_2h^2)\big)\leq Ch^{7/8}\,.$$
In the support of $\chi_{1,h}$, we may write
$(1-t\kappa(s))^{-1}\geq 1$ and get the inequality in
\eqref{eq:rem:partial-s}.
\end{rem}

\begin{rem}\label{rem:mu1}
 The min-max principle and the conclusion in
Theorem~\ref{thm:lb-qf} yield a lower bound on the first eigenvalue
of the operator $\mathcal L_h$, namely, there exists $h_0>0$ such
that, for all $h\in(0,h_0)$,
$$\mu_1(h)\geq -h-\kappa_{\max}h^{3/2}-Ch^{7/4}\,.$$
\end{rem}

As a consequence of Theorem~\ref{thm:lb-qf}, we can prove, following the same steps as in \cite[Thm.~4.9]{FH}, that:
\begin{thm}\label{thm:dec-h}
Let $M>0$ and $\chi_1\in C_c^\infty(\overline{\R_+}\,)$. There exist
constants $C>0$, $\alpha>0$ and $h_0\in(0,1)$ such that, if $u_h$ is
a normalized eigenfunction of the operator $\mathcal L_h$ with
eigenfunction $\mu(h)\leq -h-\kappa_{\max}h^{3/2}+Mh^{7/4}$, then
for all $h\in(0,h_0)$,
$$\int_\Omega \chi_1\left(\frac{{\rm
dist}(x,\partial\Omega)}{h^{1/8}}\right)^2\left\{|u_h(x)|^2+h|\nabla u_h(x)|^2\right\}\exp\left(\frac{2\alpha\, |s(x)|^2}{h^{1/4}}\right)\,dx\leq C\,.$$
\end{thm}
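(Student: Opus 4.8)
The plan is to adapt the Agmon-type estimate argument from \cite[Thm.~4.9]{FH}, using Theorem~\ref{thm:lb-qf} as the effective lower bound that plays the role of the harmonic approximation of the curvature near $s=0$. Working from the start, fix $\chi_1$ as in the statement, set $\Psi(x)=\exp\bigl(\alpha|s(x)|^2/h^{1/4}\bigr)$ (suitably truncated so that it is bounded and Lipschitz, cutting off where $|s(x)|$ is of order one, which is harmless since we have already localized within $\dist(x,\partial\Omega)\le 2h^{1/8}$ via $\chi_1$), and write $v_h=\chi_1\bigl(\dist(x,\partial\Omega)/h^{1/8}\bigr)\,\Psi\,u_h$. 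Starting from the eigenvalue equation and integrating by parts in the standard IMS/Agmon way, one obtains an identity of the form
\begin{equation*}
q_h(v_h)=\mu(h)\|v_h\|^2+h^2\int_\Omega|\nabla\Psi_{\rm tot}|^2|u_h|^2\,dx+(\text{error from }\chi_1\text{-cutoff})\,,
\end{equation*}
where $\Psi_{\rm tot}$ collects $\Psi$ and the distance cutoff, and the $\chi_1$-cutoff error is controlled using Theorem~\ref{thm:dec} (exponential decay in the normal direction) so that it contributes only $\mathcal O(h^\infty)$ or at worst a negligible $\mathcal O(h^{7/4})$ term.

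Next I would apply Theorem~\ref{thm:lb-qf} to the left-hand side: $q_h(v_h)\ge\int_\Omega U_h|v_h|^2\,dx$. On the support of $v_h$ we have $\dist(x,\partial\Omega)\le 2h^{1/8}$, so $U_h(x)=-h-\kappa(s(x))h^{3/2}-Ch^{7/4}$. Using Assumption~(A), near $s=0$ one has $-\kappa(s)\ge-\kappa_{\max}+\tfrac{k_2}{4}s^2$ for $|s|$ small (and $-\kappa(s)\ge-\kappa_{\max}+c$ for a fixed $c>0$ outside a neighborhood of $0$, where the uniqueness of the maximum is used). Therefore
\begin{equation*}
\int_\Omega U_h|v_h|^2\,dx\ge\Bigl(-h-\kappa_{\max}h^{3/2}-Ch^{7/4}\Bigr)\|v_h\|^2+\frac{k_2}{4}h^{3/2}\int_\Omega |s(x)|^2|v_h|^2\,dx\,,
\end{equation*}
the last term being the ``potential well'' in the $s$-variable. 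On the right-hand side, the gradient-of-weight term is estimated by $h^2|\nabla\Psi_{\rm tot}|^2\le C\alpha^2 h^{2}\cdot h^{-1/2}|s(x)|^2\Psi^2+\mathcal O(h^2)=C\alpha^2 h^{3/2}|s(x)|^2\Psi^2+\mathcal O(h^2)$ on the relevant region, since $|\nabla_s\Psi|=2\alpha|s|h^{-1/4}\Psi$ and the Jacobian factors are bounded. Combining these and using the hypothesis $\mu(h)\le -h-\kappa_{\max}h^{3/2}+Mh^{7/4}$, the terms $-h\|v_h\|^2$ and $-\kappa_{\max}h^{3/2}\|v_h\|^2$ cancel, leaving
\begin{equation*}
\Bigl(\frac{k_2}{4}-C\alpha^2\Bigr)h^{3/2}\int_\Omega|s(x)|^2|v_h|^2\,dx\le (M+C)h^{7/4}\|v_h\|^2+\mathcal O(h^2)+\mathcal O(h^\infty)\,.
\end{equation*}

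Choosing $\alpha>0$ small enough that $k_2/4-C\alpha^2>0$, this shows $\int_\Omega|s(x)|^2|v_h|^2\,dx\le Ch^{1/4}\|v_h\|^2$, and more importantly, feeding this back into the identity, that $\|v_h\|^2=\|\,\chi_1(\cdot/h^{1/8})\Psi u_h\|^2\le C$ uniformly in $h$ (the $h^{3/2}$-scale confinement absorbs the $h^{7/4}$ and $h^2$ losses). This is the $L^2$ half of the claimed bound. For the gradient part, one returns to the IMS identity: $q_h(v_h)$ controls $\int|h\nabla v_h|^2$ from below after subtracting the boundary term, and the boundary term together with the $-h\|v_h\|^2$ contribution is handled exactly as in the one-dimensional model estimates (via $\lambda_1(\mathcal H_{0,h})\ge-1$ and the scaling used after \eqref{eq:rev-scaling}); since all other terms are now known to be $\mathcal O(1)$, one gets $h\int_\Omega|\nabla v_h|^2\,dx\le C$, which after expanding $\nabla v_h$ and absorbing the (bounded) contributions of $\nabla\chi_1$ and $\nabla\Psi$ gives $\int_\Omega\chi_1(\cdot/h^{1/8})^2\,h|\nabla u_h|^2\Psi^2\,dx\le C$, i.e. exactly the gradient term in Theorem~\ref{thm:dec-h}. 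The main obstacle is bookkeeping the error terms at the correct orders — in particular making sure the weight-gradient term $h^2|\nabla\Psi|^2$ really lands at order $h^{3/2}|s|^2$ (so that it can be beaten by the $\frac{k_2}{4}h^{3/2}|s|^2$ well for small $\alpha$) rather than at a larger order, and checking that the normal-direction cutoff $\chi_1(\dist/h^{1/8})$ introduces only negligible commutator errors, for which Theorem~\ref{thm:dec} with any $\alpha'<1$ is the essential input; these are precisely the points where one mimics \cite[Thm.~4.9]{FH}.
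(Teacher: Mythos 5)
Your proposal is correct and follows exactly the route the paper intends: the paper gives no written proof of Theorem~\ref{thm:dec-h}, only the citation to \cite[Thm.~4.9]{FH}, and your argument is precisely that Agmon-type estimate, with the weight-gradient term correctly landing at order $\alpha^2 h^{3/2}|s|^2$ so that it is absorbed by the $\tfrac{k_2}{4}h^{3/2}|s|^2$ well from Theorem~\ref{thm:lb-qf} for small $\alpha$. The only step left implicit is the standard final bootstrap, splitting $\|v_h\|^2$ over $\{|s|\leq Rh^{1/8}\}$ (where $\Psi$ is bounded) and $\{|s|>Rh^{1/8}\}$ (where $h^{3/2}|s|^2\geq R^2h^{7/4}$ dominates the $Mh^{7/4}$ loss), which is exactly what your phrase ``feeding this back into the identity'' amounts to.
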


The localization of the bound states in
Theorem~\ref{thm:dec-h} near the set
$$n(\partial\Omega)=\{x\in\partial\Omega~:~\kappa(s(x))=\kappa_{\max}\}$$
can be expressed using the Agmon distance $\hat
d_{\partial\Omega}(s,n(\partial\Omega))$ in $\partial\Omega$
associated with the metric $(\kappa_{\max}-\kappa(s))ds^2$. Let us
define
$$\hat d(x,n(\partial\Omega),h)= \hat d_{\partial\Omega}(s(x),n(\partial\Omega))\chi_1(d(x,\partial\Omega))+h^{-1/4}d(x,\partial\Omega)\,.$$
As in \cite[Theorem~8.3.4]{FH-b},
Theorem~\ref{thm:lb-qf} yields the following conclusion. Let $M>0$.
There exists $\delta>0$ and for all $\epsilon>0$, there exist
$h_\epsilon>0$ and $C_\epsilon>0$ such that, for all
$h\in(0,h_\epsilon]$, if $u_h$ is a normalized ground state as in
Theorem~\ref{thm:dec-h}, then,
$$\Big\|\exp\big(\delta h^{-1/4}\hat
d(x,n(\partial\Omega),h)\big)\,u_h\Big\|_{L^2(\Omega)}\leq
C_\epsilon\exp(\epsilon h^{-1/4})\,.$$ This estimate may have an
advantage over that in Theorem~\ref{thm:dec-h} when approximating
the bound states by  suitably constructed trial states.

A simple consequence of Theorems~\ref{thm:dec} and \ref{thm:dec-h}
is:
\begin{cor}\label{cor:dec}
Let $\chi\in C_c^\infty(\,\overline{\R_+}\,)$ such that ${\rm
supp}\,\chi\subset[0,t_0)$. Under the assumptions in
Theorem~\ref{thm:dec-h}, if $t_0$ is sufficiently small, then there
exists a constant $C$ such that,
$$\int_\Omega|s(x)|^k\,|\chi(t(x))|^2|u_h(x)|^2\,dx\leq
C\, h^{k/8}\,.$$
\end{cor}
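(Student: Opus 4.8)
The plan is to combine the two localization estimates already established: the normal Agmon decay of Theorem~\ref{thm:dec}, which controls $u_h$ in the region $\{\dist(x,\partial\Omega)\gtrsim h^{1/8}\}$, and the Gaussian decay in the arc-length variable of Theorem~\ref{thm:dec-h}, which controls $|s(x)|^k|u_h|^2$ inside the thin tube $\{\dist(x,\partial\Omega)\lesssim h^{1/8}\}$. First I would fix once and for all a function $\chi_1\in C_c^\infty(\,\overline{\R_+}\,)$ with $0\le\chi_1\le 1$, $\chi_1\equiv 1$ on $[0,1]$ and $\supp\chi_1\subset[0,2)$, and let $\alpha>0$, $C>0$, $h_0\in(0,1)$ be the constants produced by Theorem~\ref{thm:dec-h} for this particular $\chi_1$. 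Choosing $t_0$ small enough that the boundary coordinates $(s(x),t(x))$ are well defined on $\{t(x)<t_0\}\supset\supp\big(\chi(t(\cdot))\big)$, we note that $s(x)$ then ranges in the compact set $\mathbb{R}/(|\partial\Omega|\mathbb{Z})$, so that $|s(x)|$ (the representative in $(-|\partial\Omega|/2,|\partial\Omega|/2]$) is bounded.

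Next I would split, for $x$ with $t(x)<t_0$,
\[
\chi(t(x))^2=\chi(t(x))^2\,\chi_1\!\Big(\tfrac{\dist(x,\partial\Omega)}{h^{1/8}}\Big)^2+\chi(t(x))^2\Big(1-\chi_1\!\Big(\tfrac{\dist(x,\partial\Omega)}{h^{1/8}}\Big)^2\Big),
\]
and bound the two resulting contributions to $\int_\Omega|s(x)|^k|\chi(t(x))|^2|u_h|^2\,dx$ separately. On the support of the second term one has $\dist(x,\partial\Omega)\ge h^{1/8}$; since $\mu(h)\le -h-\kappa_{\max}h^{3/2}+Mh^{7/4}\le -\tfrac12 h$ for $h$ small, Theorem~\ref{thm:dec} applies with, say, $M'=-\tfrac12$, giving $\int_{\Omega}|u_h|^2\exp\!\big(2\alpha'\dist(x,\partial\Omega)h^{-1/2}\big)\,dx\le C$ for a fixed $\alpha'\in(0,1/\sqrt2)$. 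Hence $\int_{\{\dist(x,\partial\Omega)\ge h^{1/8}\}}|u_h|^2\,dx\le C\exp(-2\alpha' h^{-3/8})=\mathcal O(h^\infty)$, and using the boundedness of $|s(x)|^k$ and of $\chi$, this contribution is $\mathcal O(h^\infty)\le Ch^{k/8}$.

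For the first term I would use $\chi(t(x))^2\le\|\chi\|_\infty^2$ together with the elementary one-variable estimate
\[
\sup_{r\ge 0} r^k e^{-2\alpha r^2/h^{1/4}}=\Big(\frac{k}{4\alpha}\Big)^{k/2}e^{-k/2}\,h^{k/8}=:C_k\,h^{k/8},
\]
which gives the pointwise bound $|s(x)|^k\le C_k\,h^{k/8}\exp\!\big(2\alpha|s(x)|^2/h^{1/4}\big)$. Inserting this and invoking Theorem~\ref{thm:dec-h} yields
\[
\int_\Omega|s(x)|^k\,\chi(t(x))^2\,\chi_1\!\Big(\tfrac{\dist(x,\partial\Omega)}{h^{1/8}}\Big)^2|u_h|^2\,dx
\le\|\chi\|_\infty^2\,C_k\,h^{k/8}\int_\Omega\chi_1\!\Big(\tfrac{\dist(x,\partial\Omega)}{h^{1/8}}\Big)^2\exp\!\Big(\tfrac{2\alpha|s(x)|^2}{h^{1/4}}\Big)|u_h|^2\,dx\le C\,h^{k/8}.
\]
Adding the two contributions proves the corollary. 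The only point requiring care is the compatibility of length scales: $\supp\chi\subset[0,t_0)$ is a fixed neighborhood of the boundary, whereas $\chi_1(\dist(\cdot,\partial\Omega)/h^{1/8})$ only localizes to the shrinking tube $\{\dist(\cdot,\partial\Omega)\le 2h^{1/8}\}$, so the transition region $\{h^{1/8}\le\dist(x,\partial\Omega)<t_0\}$ is not covered by Theorem~\ref{thm:dec-h} and must be absorbed using the normal decay of Theorem~\ref{thm:dec}; this is the one step where the two estimates must be glued together, but it presents no genuine obstacle.
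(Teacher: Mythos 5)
Your proof is correct and follows exactly the route the paper intends (the paper gives no detailed argument, merely declaring the corollary a "simple consequence" of Theorems~\ref{thm:dec} and \ref{thm:dec-h}, with the remark that the first localization is also used): the tangential Gaussian weight of Theorem~\ref{thm:dec-h} absorbs $|s(x)|^k$ at cost $\mathcal O(h^{k/8})$ via the elementary bound $r^k\leq C_k h^{k/8}\exp(2\alpha r^2/h^{1/4})$, while the transition region $\{h^{1/8}\leq t(x)<t_0\}$ is $\mathcal O(h^\infty)$ by the normal Agmon decay of Theorem~\ref{thm:dec}. All the details, including the verification that $\mu(h)\leq Mh$ for some $M\in(-1,0)$ so that Theorem~\ref{thm:dec} applies, check out.
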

Here we have also used the first localization.
\section{Construction of approximate eigenfunctions}\label{sec:ub}
In this section, we construct trial states $(\phi_n)$ in the domain
of the operator $\mathcal L_h$ such that, for every fixed
$n\in\mathbb N$, we have
\begin{equation}\label{eq:ef}
\left\|\mathcal L_h\phi_n-\left(-h-\kappa_{\max}h^{3/2}+(2n-1)\sqrt{ \frac{-\kappa''(0)}{2} }\,h^{7/4}\right)\phi_n\right\|_{L^2(\Omega)}\leq
Ch^{15/8}\|\phi_n\|_{L^2(\Omega)}\,.
\end{equation}
Notice that a direct consequence of \eqref{eq:ef} is:
\begin{equation}\label{eq:ef'}
\left\langle\mathcal L_h\phi_n-\left(-h-\kappa_{\max}h^{3/2}+(2n-1)\sqrt{ \frac{-\kappa''(0)}{2} }\,h^{7/4}\right)\phi_n\,,\,\phi_n\right\rangle_{L^2(\Omega)}
\leq Ch^{15/8}\|\phi_n\|^2_{L^2(\Omega)}\,.
\end{equation}
The trial states $(\phi_n)$ will be supported near the boundary of
$\Omega$, so that  the use of the boundary coordinates $(s,t)$
recalled in Section~\ref{sec:app} is valid. The operator $\mathcal
L_h$ is expressed in $(s,t)$ coordinates as follows:
$$\mathcal
L_h=-h^2a^{-1}\partial_s(a^{-1}\partial_s)-h^2a^{-1}\partial_t(a\partial_t)\quad \big({\rm in}~L^2(adsdt)\big)\,,$$
where
$$a(s,t)=1-t\kappa(s)\,.$$
In boundary coordinates, the Robin condition in \eqref{eq:bc}
becomes
$$h\partial_tu=-h^{3/2}u\quad{\rm on}\quad t=0\,.$$
The change of variable $(s,t)\mapsto (h^{1/8}\sigma,h^{1/2}\tau)$
transforms the above expression of $\mathcal L_h$ to
\begin{equation}\label{eq:scalingLh}
\mathcal L_h=h\left(-h^{3/4}\tilde
a^{-1}\partial_\sigma(\tilde a^{-1}\partial_\sigma)-\tilde
a^{-1}\partial_\tau(\tilde a\partial_\tau)\right)\,,\end{equation}
 where
\begin{equation}\label{eq:newa}
\tilde a(\sigma,\tau)=1-h^{1/2}\tau\kappa(h^{1/8}\sigma)\,.
\end{equation}
The boundary condition is now transformed to
$$\partial_\tau\widetilde u=-\widetilde u\quad{\rm on}\quad \tau=0\,.$$
  Recall that the
value of the maximal curvature is $\kappa_{\max}=\kappa(0)$. We have
the following asymptotic expansions:
\begin{align}
&\tilde a(\sigma,\tau)=1- h^{1/2} \tau \kappa (0) + h^{1/2}\tau\epsilon_1(\sigma)\,,\label{atilde}\\
& \tilde a^{-1}\partial_\tau\widetilde a(\sigma,\tau)= -h^{1/2}\kappa(0)-h^{3/4}\frac{\kappa''(0)}{2!}\sigma^2+h^{7/8}\epsilon_2(\sigma,\tau)\,,\\
&\tilde a(\sigma,\tau)^{-1}=1+h^{1/2}\epsilon_3(\sigma,\tau)\,,\label{atilde-1}\\
&\tilde a(\sigma,\tau)^{-2}=1+h^{1/2}\epsilon_4(\sigma,\tau)\,,\\
&\partial_\sigma\left\{\tilde a(\sigma,\tau)^{-1}\right\}=-h^{5/8}\epsilon_5(\sigma,\tau)\,,
\end{align}
where, for $0<h<\frac12$, $0\leq \tau\leq \mathcal O(h^{-\rho})$ and
$\sigma= \mathcal O(h^{-1/8})$, the functions $\epsilon_j$,
$j=1,\cdots,5,$ satisfy,
\begin{multline}\label{estabc}
 |\epsilon_1(\sigma)|\leq C|\sigma|^2\,,\quad
|\epsilon_2(\sigma,\tau)|\leq
C(|\sigma|^3+h^{1/8}\tau)\,,\quad|\epsilon_3(\sigma,\tau)|+
|\epsilon_4(\sigma,\tau)|+|\epsilon_5(\sigma,\tau)|\leq C\tau\,.
\end{multline}
 This gives us the following identities
\begin{align}
-\tilde a^{-1}\partial_\tau(\tilde
a\partial_\tau)&=-\partial_\tau^2+h^{1/2}\kappa(0)\partial_\tau+
h^{3/4}\frac{\kappa''(0)}{2!}\sigma^2\partial_\tau+h^{7/8}q_1(\sigma,\tau)\partial_\tau\,,\label{eq:dec-Lh2}\\
-h^{3/4}\tilde a^{-1}\partial_\sigma(\tilde
a^{-1}\partial_\sigma)&  =-h^{3/4}\partial_\sigma^2+h^{7/8}\left(h^{3/8}q_2(\sigma,\tau)\partial_\sigma^2+ h^{1/2}q_3(\sigma,\tau)\partial_\sigma\right)\,.\label{eq:dec-Lh1}
\end{align}
where the functions $q_1,q_2,q_3$ satisfy for $0<h<\frac12$, $0\leq
\tau\leq \mathcal O(h^{-\rho})$ and $|\sigma| =  \mathcal
O(h^{-1/8})$,
\begin{equation}\label{eq:qi}
 |q_1(\sigma,\tau)|\leq
C\, (|\sigma|^3+h^{1/8}\tau)\quad{\rm and}\quad
|q_2(\sigma,\tau)|+|q_3(\sigma,\tau)|\leq C\, \tau\,.
\end{equation}
We note for further use the  expressions of the functions
$q_1,q_2,q_3$:
\begin{multline}\label{eq:expq}
q_1(\sigma,\tau)=h^{-7/8}\left\{-\widetilde
a(\sigma,\tau)^{-1}\partial_\tau\big(\widetilde
a(\sigma,\tau)\big)-h^{1/2}\kappa(0)-h^{3/4}\frac{\kappa''(0)}{2}\sigma^2\right\}\,,\\
q_2(\sigma,\tau)=h^{-1/8}\big(1-\widetilde
a(\sigma,\tau)\big)\,,\quad q_3(\sigma,\tau)=-\widetilde
a(\sigma,\tau)\tau\kappa'(h^{1/8}\sigma)\,.
\end{multline}
Consequently, we have the formal expansion of the operator $\mathcal
L_h$,
\begin{equation}\label{eq:dec-Lh}
h^{-1}\mathcal
L_h=P_0+h^{1/2}P_2+h^{3/4}P_3+h^{7/8}Q_h\,,\end{equation} where
\begin{align}\label{defPj}
&P_0=-\partial_\tau^2\,, \nonumber\\
&P_2= \kappa(0)\partial_\tau= \kappa_{\max}\partial_\tau\,,\nonumber\\
&P_3=-\partial_\sigma^2+\frac{\kappa''(0)}{2}\sigma^2\partial_\tau\,,\nonumber\\
&Q_h=q_1(\sigma,\tau)\partial_\tau+h^{3/8}q_2(\sigma,\tau)\partial_\sigma^2+h^{3/8}q_3(\sigma,\tau)\partial_\sigma\,.
\end{align}
 Since
$\kappa(0)$ is the non-degenerate maximum of $\kappa$, then
$\kappa''(0)<0$. The eigenvalues of the harmonic oscillator
$$-\partial_\sigma^2+\frac{-\kappa''(0)}{2}\sigma^2\quad{\rm
in~}L^2(\R)$$ are $(2n-1)\sqrt{\frac{-\kappa''(0)}{2}}$ with
$n\in\mathbb N$. \\
The corresponding normalized eigenfunctions are denoted
by $f_n(\sigma)$. They have the form
\begin{equation}\label{fnsigma}
f_n(\sigma) = h_n(\sigma) \exp { \left(
- \sqrt{-\frac{\kappa''(0)}{2}}\frac{ \sigma^2}{2}
 \right)}
\,,
\end{equation}
where the $h_n(\sigma)$ are the rescaled Hermite polynomials.\\

Define the trial state $\phi_n\in L^2(\Omega)$ in $(s,t)$
coordinates as follows:
\begin{equation}\label{eq:ts}
\phi_n(s,t)=h^{-5/16}\,\chi_1\left(\frac{s}{|\partial\Omega|}\right)\,\chi_1\left(\frac{t}{h^{1/8}}\right)\,u_0(h^{-1/2}\,t)\,f_n(h^{-1/8}s)\,,
\end{equation}
where
\begin{equation}\label{defu0}
u_0(\tau)=\sqrt{2}\exp(-\tau)\,.
\end{equation}
The function $u_0$ should be interpreted as the ground state of the
model operator $\mathcal H_{0,0}$ introduced in \eqref{defH00}.

  The construction of the
trial state $\phi_n$ is based on the simple observation that the
function $g(\sigma,\tau)=u_0(\tau)f_n(\sigma)$ satisfies
$$P_0g=-g\,,\quad P_2g=-\kappa_{\max}g\,,\quad P_3g=(2n-1)g\,,$$
so that, in light of the expression of the operator $\mathcal L_h$
in \eqref{eq:dec-Lh}, we expect that \eqref{eq:ef} holds true. The
computations that we present below will  show that \eqref{eq:ef} is
indeed  true.

First, notice that, the explicit expression of $\phi_n$ and
\eqref{eq:bc;n} give us
\begin{equation}\label{eq:n-phi}
\|\phi_n\|_{L^2(\Omega)}=1+\mathcal O(h^{3/4})\,.
\end{equation}
Next, we observe that, after a change of variables,
\eqref{eq:dec-Lh} yields {(here $k_2=-\kappa''(0)$)}
\begin{equation}\label{eq:dec-Lh-ap}
h^{-1}\mathcal
L_h\phi_n(s,t)=\big(-1-\kappa_{\max}h^{1/2}+(2n+1)\frac{k_2}{2}h^{3/4}\big)\varphi_n(\sigma,\tau)+R\varphi_n(\sigma,\tau)\,,\end{equation}
where $\varphi_n(\sigma,\tau)=\phi_n(h^{1/8}\sigma,h^{1/2}\tau)$ and
\begin{align*}
h^{5/16}\,R\varphi_n(\sigma,\tau)&=-\kappa_{\max}h^{1/2}h^{3/8}\chi_1'(h^{3/8}\tau)\chi_1\left(\frac{h^{1/8}\sigma}{|\partial\Omega|}\right)\,u_0(\tau)\,f_n(\sigma)\\
&~~
-h^{3/8}\Big(2\chi_1'(h^{3/8}\tau)+h^{3/8}\chi_1''(h^{3/8}\tau)\Big)\chi_1\left(\frac{h^{1/8}\sigma}{|\partial\Omega|}\right)\,u_0(\tau)\,f_n(\sigma)\\
&~~-h^{7/8}\Big(2\chi_1'\left(\frac{h^{1/8}\sigma}{|\partial\Omega|}\right)+h^{1/8}\chi_1''\left(\frac{h^{1/8}\sigma}{|\partial\Omega|}\right)\Big)
\chi_1(h^{3/8}\tau)\,u_0(\tau)\,f_n(\sigma)\\
&~~+h^{7/8}Q_h\Big(\chi_1\left(h^{3/8}\tau\right)\,\chi_1\left(\frac{h^{1/8}\sigma}{|\partial\Omega|}\right)\,u_0(\tau)\,f_n(\sigma) \Big)\,.
\end{align*}
Using the expression of $Q_h$ and the exponential decay of $u_0$ and
$f_n$ at infinity, it is easy to check that
$$\int |R\varphi_n(\sigma,\tau)|^2\widetilde
a(\sigma,\tau)\,h^{5/8}d\sigma d\tau\leq Ch^{7/4}\,.$$ Now, we infer
from \eqref{eq:dec-Lh-ap}, \begin{multline*} \left\|\mathcal
L_h\phi_n-\big(-h-\kappa_{\max}h^{3/2}+(2n+1)\frac{k_2}{2}h^{7/4}\big)\phi_n\right\|_{L^2(\Omega)}\\
=h\left\|R\varphi_n(\sigma,\tau)\right\|_{L^2(h^{5/8}\widetilde
a(\sigma,\tau)d\sigma d\tau)}\leq Ch^{15/8}\,,\end{multline*}
thereby proving \eqref{eq:ef}.

Next, we indicate how \eqref{eq:ef} is useful to obtain upper bounds
of the eigenvalues of the operator $\mathcal L_h$. As a consequence
of the spectral theorem, \eqref{eq:ef} yields that,  for every fixed
$n\in\mathbb N$, there exists an eigenvalue $\widetilde\mu_n$ of the
operator $\mathcal L_h$ such that
$$\widetilde\mu_n=-h-\kappa_{\max}h^{3/2}+(2n-1)\sqrt{\frac{k_2}2}\,h^{7/4}+\mathcal
O(h^{15/8})\,.$$ Thus, for every $n\in\mathbb N$ and $h>0$
sufficiently small, we get an increasing sequence of distinct
eigenvalues $(\widetilde\mu_j)_{j=1}^n$ of the operator $\mathcal
L_h$. Clearly, the min-max eigenvalues $(\mu_j(\mathcal L_h))$ of
the operator $\mathcal L_h$ will satisfy
$$\forall~j\in\{1,2,\cdots,n\}\,,\quad\mu_j(\mathcal L_h)\leq
\max_{1\leq i\leq j}\widetilde\mu_i\leq \widetilde\mu_j.$$ In that way, we arrive at:
\begin{thm}\label{thm:st}
Let $n \in\mathbb N$. There exist constants $h_n\in(0,1)$ and
$C_n>0$ such that the min-max eigenvalues $(\mu_j(\mathcal
L_h))_{j=1}^n$ of the operator $\mathcal L_h$ satisfy, for all
$h\in(0,h_n)$,
$$\mu_j(\mathcal
L_h)\leq -h-\kappa_{\max}h^{3/2}+(2j-1)\sqrt{\frac{-\kappa''(0)}{2}}\,h^{7/4}+r_j(h)\,,$$
where
$$|r_j(h)|\leq C_n h^{15/8}\,.$$
\end{thm}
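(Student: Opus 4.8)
The plan is to obtain Theorem~\ref{thm:st} as a direct consequence of the approximate-eigenfunction estimate \eqref{eq:ef}, via the spectral theorem and the min-max principle; the trial states have already been constructed in \eqref{eq:ts}, so no new object is needed.

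Fix $n\in\mathbb N$ and, for $1\le j\le n$, abbreviate
$$\Lambda_j(h):=-h-\kappa_{\max}h^{3/2}+(2j-1)\sqrt{\tfrac{-\kappa''(0)}{2}}\,h^{7/4}\,.$$
By \eqref{eq:n-phi} each trial state $\phi_j$ is, for $h$ small, a non-zero element of $D(\mathcal L_h)$ with $\|\phi_j\|_{L^2(\Omega)}=1+\mathcal O(h^{3/4})$, and \eqref{eq:ef} reads $\|(\mathcal L_h-\Lambda_j(h))\phi_j\|_{L^2(\Omega)}\le Ch^{15/8}\|\phi_j\|_{L^2(\Omega)}$. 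The spectral theorem applied to the self-adjoint operator $\mathcal L_h$ then produces, for each $j$, an eigenvalue $\widetilde\mu_j(h)\in\sigma(\mathcal L_h)$ with $|\widetilde\mu_j(h)-\Lambda_j(h)|\le Ch^{15/8}$.

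Next I would exploit the separation of the targets: $\Lambda_{j+1}(h)-\Lambda_j(h)=2\sqrt{-\kappa''(0)/2}\,h^{7/4}$, which dominates the $\mathcal O(h^{15/8})$ errors because $h^{15/8}=h^{7/4}\cdot h^{1/8}=o(h^{7/4})$. Hence there is $h_n\in(0,1)$ such that, for $h\in(0,h_n)$, the numbers $\widetilde\mu_1(h)<\widetilde\mu_2(h)<\dots<\widetilde\mu_n(h)$ are $n$ distinct eigenvalues of $\mathcal L_h$, each negative and therefore (in the exterior-domain case) below the essential spectrum $[0,\infty)$. Consequently $\mathcal L_h$ has at least $j$ eigenvalues, counted with multiplicity, in $(-\infty,\widetilde\mu_j(h)]$, so by the min-max characterization $\mu_j(\mathcal L_h)\le\widetilde\mu_j(h)\le\Lambda_j(h)+Ch^{15/8}$ for $1\le j\le n$. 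Taking $r_j(h):=\mu_j(\mathcal L_h)-\Lambda_j(h)$ and $C_n:=C$ finishes the proof.

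The only genuinely delicate point is the counting step: the spectral theorem applied to a single $\phi_j$ only asserts that \emph{some} eigenvalue lies near $\Lambda_j(h)$, and one must exclude that these coincide for different $j$ --- which is precisely what the $h^{7/4}$-separation of the $\Lambda_j(h)$ provides. A slightly more robust alternative is to apply the spectral theorem to the subspace $V_j:=\mathrm{span}(\phi_1,\dots,\phi_j)$: using the $L^2(\mathbb R)$-orthogonality of the Hermite functions $f_i$ together with the exponential decay of $u_0$ and $f_i$, one checks that the Gram matrix $(\langle\phi_i,\phi_k\rangle)_{i,k\le j}$ and the matrix $(\langle\mathcal L_h\phi_i,\phi_k\rangle)_{i,k\le j}$ are, up to $\mathcal O(h^{3/4})$, the identity and $\mathrm{diag}(\Lambda_1(h),\dots,\Lambda_j(h))$ respectively; then $\dim V_j=j$, the Rayleigh quotient on $V_j$ is $\le\Lambda_j(h)+Ch^{15/8}$, and min-max gives the bound at once. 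Estimating these off-diagonal terms --- the weight $a=1-t\kappa(s)$ breaks exact orthogonality --- is the only computation involved, and it is routine.
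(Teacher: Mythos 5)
Your proof is correct and follows essentially the same route as the paper: apply the spectral theorem to each trial state $\phi_j$ from \eqref{eq:ef} to locate an eigenvalue within $\mathcal O(h^{15/8})$ of each target $\Lambda_j(h)$, use the $h^{7/4}$-separation of the targets to conclude these are $n$ distinct eigenvalues, and then invoke the min-max principle. The Gram-matrix variant you sketch at the end is a reasonable alternative but is not needed; the counting argument via distinctness is exactly what the paper does.
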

\section{Refined trial states}\label{sec:Gr}

\subsection{ Main Result} ~\\
In this section, we construct trial states that will give us,
together with the lower bounds that we will derive in the next
section, an accurate expansion of the eigenvalues of the operator
$\mathcal L_h$ valid to any order.

In \eqref{eq:dec-Lh}, we derived  the expression of the operator
$\mathcal L_h$ in the re-scaled boundary coordinates
$(\sigma,\tau)=(h^{-1/8}s,h^{-1/2}t)$. The expansion in
\eqref{eq:dec-Lh} involves the operators $P_0,P_2,P_3,Q_h$ in
\eqref{defPj}. The key to the construction of the trial states is to
expand the  operator $Q_h$ in powers of $h^{1/8}$.

The functions $q_1$, $q_2$ and $q_3$ in \eqref{defPj} admit Taylor
expansions that can be rearranged in the form:
\begin{equation}\label{eq:expqj}
\begin{aligned}
&q_1(\sigma,\tau)=\sum_{j=0}^\infty q_{1,j}(\sigma,\tau)h^{j/8}\,,\\
&h^{3/8}q_2(\sigma,\tau)=\sum_{j=0}^\infty q_{2,j}(\sigma,\tau)h^{j/8}\,,\\
&h^{3/8}q_3(\sigma,\tau)=\sum_{j=0}^\infty q_{3,j}(\sigma,\tau)h^{j/8}\,,
\end{aligned}
\end{equation}
with the functions $q_{k,j}(\sigma,\tau)$ being polynomial functions
of  $(\sigma,\tau)$.  The operator $Q_h$ in \eqref{defPj} admits the
formal expansion
\begin{equation}\label{expQh}
Q_h=\sum_{j=0}^\infty h^{j/8}Q_j\,,
\end{equation}
where
\begin{equation}\label{eq:Qj}
Q_j=q_{1,j}(\sigma,\tau)\partial_\tau+q_{2,j}(\sigma,\tau)\partial_\sigma^2+q_{3,j}(\sigma,\tau)\partial_\sigma\,.
\end{equation}
Furthermore,  for every $p\geq1$, $j\geq0$ and
$f\in\mathcal S(\R\times \overline{\R_+})$,  { $Q_jf\in L^p(\R\times\R_+)$}.

Remember from \eqref{defu0}  that $u_0(\tau)=\sqrt{2}\,\exp(-\tau)$ and $f_n(\sigma)$ is
the $n$-th normalized eigenfunction of the harmonic oscillator
$H_{\rm harm}=-\partial_\sigma^2+\frac{k_2}2\sigma^2$.

For all $M\geq0$, we introduce the two operators:
\begin{equation}\label{eq:LhM}
\widetilde{\mathcal L}_{h,M}=P_0+h^{1/2}P_2+h^{3/4}P_3+h^{7/8}F_M\quad{\rm and}\quad F_M=\sum_{j=0}^M h^{j/8}Q_j\,.
\end{equation}
Thanks to \eqref{eq:dec-Lh} and \eqref{expQh}, for every $f\in
\mathcal S(\R\times\R_+)$, there exists a constant $C>0$ such that,
for all $h\in(0,1)$,
\begin{equation}\label{eq:Lh-LhM}
\|h^{-1}\mathcal L_h
f-\widetilde{\mathcal L}_{h,M}f\|_{L^2(\R\times\R_+)}\leq Ch^{\frac78+\frac{M+1}8}\,.
\end{equation}

Let $n\in\mathbb N$. We will construct a sequence of real numbers
$(\zeta_{j,n})_{j=0}^\infty$ and two sequences of real-valued
Schwartz functions $(v_{{n,j}})_{j=1}^\infty\subset\mathcal S(\R)$,
$(g_{n,j})_{j=0}^\infty\subset\mathcal S(\R\times \overline{\R_+})$
such that
$$\forall~j,\quad \partial_\tau
g_{n,j}\Big|_{\tau=0}=-g_{n,j}\Big|_{\tau=0}\,,$$ and for all $M\in \mathbb N$,
the function
$$\Psi_{n,M}(\sigma,\tau)=u_0(\tau)f_n(\sigma)+\sum_{j=1}^{M+1}h^{j/8} u_0(\tau) v_{n,j}(\sigma)+h^{7/8}\sum_{j=0}^M h^{j/8} g_{n,j}(\sigma,\tau)$$
is in the Schwartz space $\mathcal S(\R\times\R_+)$ and satisfies
(for all $h\in(0,h_{n,M}))$
\begin{equation}\label{eq:formalef}
\Big\|\Big(\widetilde{\mathcal L}_{h,M}
-\mu_{h,M}\Big)\Psi_{n,M}\Big\|_{L^2(\R\times\R_+)}\leq
C_{n,M}\, h^{\frac78+\frac{M+1}8}\,,
\end{equation}
where
\begin{equation}\label{eq:mu-hM}
\mu_{n,M}=-1-h^{1/2}\kappa_{\max}+h^{3/4}\sqrt{\frac{k_2}2}\,(2n-1)+h^{7/8}\sum_{j=0}^M\zeta_{j,n}h^{j/8}\,,
\end{equation}
$C_{n,M}>0$ and $h_{n,M}$ are two  constants determined by the
values of $n$ and $M$ solely.

Let us mention that, for every $j$, our definition of the
function $g_{n,j}(\sigma,\tau)$ ensures that it is a finite sum of
functions  having the simple  form
$F(\sigma)\times U(\tau)$.

Before we proceed in the construction of the aforementioned
sequences, we show how they give us refined expansions of the
eigenvalues of the operator $\mathcal L_h$. Let $\chi_1$ be the
cut-off function as in \eqref{eq:ts}. Define a function $\phi_{n,M}$
in $L^2(\Omega)$ by means of the boundary coordinates as follows:
\begin{equation}\label{eq:tsM}
\phi_{n,M}(s,t)=\chi_1\left(\frac{s}{|\partial\Omega|}\right)\, \chi_1\left(\frac{t}{h^{1/8}}\right)\Psi_{n,M}(h^{-1/8}s,h^{-1/2}t)\,.
\end{equation}
The function $\phi_{n,M}$ is in the domain of the operator $\mathcal
L_h$ because the functions $g_{n,j}$ and $u_0$ satisfy the Robin
condition at $\tau=0$. Since all involved functions in the
expression of $\Psi_{n,M}$ are in the Schwartz space, 
multiplication  by  cut-off functions will produce small errors in the
various calculations--all the error terms are $\mathcal
O(h^\infty)$. Also, since the function $u_0(\tau)f_n(\sigma)$ is
normalized in $L^2(\R\times\R_+)$,  the norm of $\phi_{n,M}$ in
$L^2(\Omega)$ is equal to $1+o(1)$, as $h\to0_+$.

Thanks to \eqref{eq:Lh-LhM} and \eqref{eq:formalef}, we get that
$$\big\|\mathcal
L_h\phi_{n,M}-h\mu_{n,M}\phi_{n,M}\big\|_{L^2(\Omega)}=\mathcal
O\big(h^{\frac78+\frac{M+1}8}\big)\|\Psi_{n,M}\|_{L^2(\Omega)}\,.$$ The spectral theorem now gives us:

\begin{thm}\label{thm:ub}
Let $n\in\mathbb N$. For every  $M\geq 0$, there exists an
eigenvalue $\widetilde\mu_n(h)$ of the operator $\mathcal L_h$ such
that, as $h\to0_+$,
$$\widetilde\mu_n(h)=-h-h^{3/2}\kappa_{\max}+h^{3/4}\sqrt{\frac{k_2}2}\,(2n-1)+h^{7/8}\sum_{j=0}^M\zeta_{j,n}h^{j/8}+\mathcal
O\big(h^{\frac78+\frac{M+1}8}\big)\,.$$
\end{thm}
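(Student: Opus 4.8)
The plan is to build the scalars $(\zeta_{j,n})_{j\ge 0}$ and the profiles $(v_{n,j})_{j\ge 1}\subset\mathcal S(\R)$, $(g_{n,j})_{j\ge 0}\subset\mathcal S(\R\times\overline{\R_+})$ by solving, order by order in the small parameter $h^{1/8}$, a hierarchy of equations that we reduce by projecting in the $\tau$ variable onto $u_0$ and its orthogonal complement (a Grushin, i.e.\ Lyapunov--Schmidt, reduction), and then to deduce the statement from \eqref{eq:Lh-LhM}, \eqref{eq:formalef} and the spectral theorem applied to the transplanted quasimode $\phi_{n,M}$ of \eqref{eq:tsM}. Inserting the ansatz
$$\Psi_{n,M}=u_0(\tau)f_n(\sigma)+\sum_{j=1}^{M+1}h^{j/8}u_0(\tau)v_{n,j}(\sigma)+h^{7/8}\sum_{j=0}^M h^{j/8}g_{n,j}(\sigma,\tau)$$
into $\widetilde{\mathcal L}_{h,M}-\mu_{n,M}$ (see \eqref{eq:LhM}), expanding $F_M=\sum_{j=0}^M h^{j/8}Q_j$, and using the model identities $(P_0+1)u_0=0$, $P_2(u_0 w)=-\kappa_{\max}u_0 w$ and $P_3(u_0 w)=u_0 H_{\rm harm}w$, valid for every $w=w(\sigma)$ (immediate from $u_0=\sqrt2 e^{-\tau}$ and \eqref{defPj}), one collects the result in powers of $h^{1/8}$. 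The orders $h^0$, $h^{1/2}$, $h^{3/4}$ force the first three terms of $\mu_{n,M}$ to be $-1$, $-h^{1/2}\kappa_{\max}$ and $h^{3/4}(2n-1)\sqrt{k_2/2}$ and fix the leading profile to be $u_0(\tau)f_n(\sigma)$ (this is precisely why $f_n$ must be the $n$-th eigenfunction of $H_{\rm harm}$), while the orders $h^{1/8}$ through $h^{3/4}$ carry no information. The first genuinely new equations appear at order $h^{7/8+k/8}$, $k=0,1,\dots,M$, and take the triangular form
$$u_0(\tau)\Big[(H_{\rm harm}-E_n)v_{n,k+1}-\zeta_{k,n}f_n\Big]+(P_0+1)g_{n,k}+\mathcal S_{n,k}=0,\qquad E_n:=(2n-1)\sqrt{\tfrac{k_2}{2}},$$
where $\mathcal S_{n,k}\in\mathcal S(\R\times\overline{\R_+})$ is an explicit function depending only on $\kappa,\kappa',\kappa''$ at $0$ and on the already constructed data $\{\zeta_{i,n},v_{n,i+1},g_{n,i}\}_{i<k}$; in particular it involves none of the new unknowns $\zeta_{k,n},v_{n,k+1},g_{n,k}$.

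Each of these equations is solved by a double Fredholm reduction. Write $\Pi$ for the orthogonal projection in $L^2(\R_+,d\tau)$ onto $u_0$ and $\Pi^\perp=\mathrm{Id}-\Pi$; since $-1$ is a simple isolated eigenvalue of the Robin realization $P_0=-\partial_\tau^2$, the operator $P_0+1$ is boundedly invertible on $\Pi^\perp L^2(\R_+)$, and its inverse maps functions of the form $(\text{polynomial in }\tau)\,e^{-\tau}$ to functions of the same form. Applying $\Pi^\perp$ to the displayed identity and imposing $\Pi g_{n,k}=0$ yields $g_{n,k}=-(P_0+1)^{-1}\Pi^\perp\mathcal S_{n,k}$, which lies in $D(P_0)$ — hence automatically satisfies $\partial_\tau g_{n,k}|_{\tau=0}=-g_{n,k}|_{\tau=0}$ — and is a finite sum of products $F(\sigma)U(\tau)$. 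Applying $\Pi$ leaves the equation $(H_{\rm harm}-E_n)v_{n,k+1}=\zeta_{k,n}f_n-\langle\mathcal S_{n,k},u_0\rangle_{L^2(\R_+,d\tau)}$; as $E_n$ is a simple eigenvalue of $H_{\rm harm}$ with normalized eigenfunction $f_n$, the Fredholm alternative forces $\zeta_{k,n}=\langle\mathcal S_{n,k},f_n u_0\rangle_{L^2(\R\times\R_+)}$ (here $f_n u_0$ denotes $(\sigma,\tau)\mapsto f_n(\sigma)u_0(\tau)$), and with this value one solves for $v_{n,k+1}$ in $f_n^\perp$, the residual multiple of $f_n$ being normalized away by $\langle v_{n,k+1},f_n\rangle=0$. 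Both $v_{n,k+1}$ and $g_{n,k}$ are then Schwartz (Hermite--Gaussian in $\sigma$, polynomial times $e^{-\tau}$ in $\tau$), so the induction closes; summing the cancelled orders, $(\widetilde{\mathcal L}_{h,M}-\mu_{n,M})\Psi_{n,M}$ reduces to its terms of order $\ge h^{7/8+(M+1)/8}$, which are $\mathcal O(h^{7/8+(M+1)/8})$ in $L^2(\R\times\R_+)$ because the finitely many profiles involved are Schwartz and $P_2,P_3,Q_0,\dots,Q_M$ have polynomial coefficients. This is \eqref{eq:formalef}.

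Granting the construction, the statement follows quickly. The cut-off $\chi_1(s/|\partial\Omega|)\,\chi_1(t/h^{1/8})$ in \eqref{eq:tsM} equals $1$ on a region containing the effective support of $\Psi_{n,M}(h^{-1/8}s,h^{-1/2}t)$, so $\phi_{n,M}\in D(\mathcal L_h)$, $\|\phi_{n,M}\|_{L^2(\Omega)}=1+o(1)$, and the commutators with the cut-offs contribute only $\mathcal O(h^\infty)$ thanks to the exponential decay of $u_0,f_n$ and the $g_{n,j}$. Combining this with \eqref{eq:Lh-LhM} and \eqref{eq:formalef} gives $\|(\mathcal L_h-h\mu_{n,M})\phi_{n,M}\|_{L^2(\Omega)}\le C h^{1+\frac78+\frac{M+1}8}\|\phi_{n,M}\|_{L^2(\Omega)}$, and the spectral theorem produces an eigenvalue $\widetilde\mu_n(h)$ of $\mathcal L_h$ with $|\widetilde\mu_n(h)-h\mu_{n,M}|\le C h^{1+\frac78+\frac{M+1}8}$; expanding $h\mu_{n,M}$ according to \eqref{eq:mu-hM} gives the asserted asymptotic expansion.

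The main difficulty is not any individual estimate but the organisation of the recursion: one must verify, order by order, that the source $\mathcal S_{n,k}$ genuinely depends only on lower-order data (the triangular structure above), that every inversion can be carried out within a function class stable under the whole procedure despite the two spectral resonances — the eigenvalue $-1$ of $P_0$ and the eigenvalue $E_n$ of $H_{\rm harm}$ — and that the two solvability conditions leave exactly one free scalar at each step, which is then identified with $\zeta_{k,n}$. The remaining ingredients, namely the bookkeeping of the several quarter- and half-integer powers of $h$ in \eqref{eq:dec-Lh}--\eqref{eq:dec-Lh1} and the control of the cut-off and curvilinear errors in the passage from $\R\times\R_+$ back to $\Omega$, are routine but must be handled with enough care to preserve the $h^{15/8+\cdots}$ accuracy.
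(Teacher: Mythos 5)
Your construction is the same as the paper's: the same ansatz $\Psi_{n,M}$, the same order-by-order hierarchy solved by projecting in $\tau$ onto $u_0$ and its complement and in $\sigma$ onto $f_n$ and its complement (the paper phrases the triangular dependence of your source $\mathcal S_{n,k}$ on lower-order data via the induction hypothesis \eqref{eq:formalef+++} on the residuals $r_{M,j}$, and your formula for $\zeta_{k,n}$ coincides with \eqref{eq:zeta0} and \eqref{eq:zeta}), followed by the same cut-off transplantation \eqref{eq:tsM} and the spectral theorem. Note only that your final exponents ($h\mu_{n,M}=-h-h^{3/2}\kappa_{\max}+h^{7/4}E_n+h^{15/8}\sum_j\zeta_{j,n}h^{j/8}$ with error $\mathcal O(h^{\frac{15}{8}+\frac{M+1}{8}})$) are the internally consistent ones dictated by \eqref{eq:mu-hM}, whereas the powers printed in the statement of Theorem~\ref{thm:ub} appear to be misprints.
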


The rest of the section is  devoted to the construction of the
numbers $\zeta_{j,n}$ and the functions $\Psi_{n,M}$. That will be
done by an iteration process. For simplicity, we will drop $n$ from
the notation and write $\zeta_j,\Psi_j,\cdots$.

\subsection{The sequence $(\zeta_{j,n})$}

\subsubsection*{Construction of
$(\zeta_{n,0},\Psi_{n,0})=(\zeta_0,\Psi_0)$}~\\
We want to find a real number $\zeta_0$ and two functions
$v_1(\sigma)$, $g_0(\sigma,\tau)$  such that
\begin{equation*}
\Psi_0(\sigma,\tau)=u_0(\tau)f_n(\sigma)+h^{1/8}u_0(\tau)v_1(\sigma)+h^{7/8}g_0(\sigma,\tau)
\end{equation*}
and
\begin{equation*}
\mu_0=-1-h^{1/2}\kappa_{\max}+h^{3/4}\sqrt{\frac{k_2}2}\,(2n-1)
+\zeta_0h^{7/4}\end{equation*} satisfy \eqref{eq:Lh-LhM} for
$M=0\,$. \\
Recall that the expression of the operator
$\widetilde{\mathcal L}_{h,M}$ for $M=0$ { is}
$$\widetilde{\mathcal
L}_{h,0}=P_0+h^{1/2}P_2+h^{3/4}P_3+h^{7/8}F_0\,.$$ For every Schwartz function $v(\sigma)$, we have the two simple identities,
\begin{multline*}
(P_0+h^{1/2}P_2\big)u_0(\tau)v(\sigma)=\big(-1-h^{1/2}\kappa_{\max}\big)u_0(\tau)v(\sigma)\quad{\rm
and}\quad P_3u_0(\tau)v(\sigma)=u_0(\tau)H_{\rm harm}v(\sigma)\,,
\end{multline*}
where $H_{\rm harm}=-\frac{d^2}{d\sigma^2}+\frac{k_2}2\sigma^2$. A
straightforward computation yields
\begin{align}
&\Big(\widetilde{\mathcal L}_{h,0}-\mu_0\Big)\Psi_0\nonumber\\
&=h^{7/8}\Big[(P_0+1)g_0(\sigma,\tau)+u_0(\tau)\big(H_{\rm harm}-\sqrt{\frac{k_2}2}\,(2n-1)\big)v_1(\sigma)+(F_0-\zeta_0)u_0(\tau)f_n(\sigma)\Big]\nonumber\\
&~+h^{7/8}\Big[\big(h^{1/2}P_2+h^{3/4}P_3+h^{7/8}F_0+h^{1/2}\kappa_{\max}-h^{3/4}\sqrt{\frac{k_2}2}\,(2n-1)-\zeta_0h^{7/4}\big)g_0(\sigma,\tau)\Big]\nonumber\\
&~+h\Big[F_0-\zeta_0\Big]u_0(\tau)v_1(\sigma)\,.\label{eq:formalef+}
\end{align}
In order that \eqref{eq:Lh-LhM} is satisfied for $M=0$, it is
sufficient to select $\zeta_0$, $g_0(\sigma,\tau)$ and $v_1(\sigma)$
 as follows
\begin{equation}\label{eq:leadingT}
(P_0+1)g_0(\sigma,\tau)+u_0(\tau)\big(H_{\rm
harm}-\sqrt{\frac{k_2}2}\,(2n-1)\big)v_1(\sigma)+(F_0-\zeta_0)u_0(\tau)f_n(\sigma)=0\,.
\end{equation}

We can select $\zeta_0$ such that
$$(F_0-\zeta_0)u_0(\tau)f_n(\varsigma)
\perp u_0(\tau)f_n(\sigma) \quad{\rm in}~L^2(\R\times\R_+)\,.$$ This
is given by the formula:
\begin{equation}\label{eq:zeta0}
\zeta_0 = \langle F_0 (u_0\otimes f_n)\,,\, u_0\otimes f_n\rangle_{L^2(\R\times\R_+)}\,.
\end{equation}
%
Consequently, 
$$h_1(\sigma):=-\int_0^\infty u_0(\tau)(
F_0-\zeta_0)u_0(\tau)f_n(\sigma)\,d\tau\perp f_n(\sigma)\quad{\rm
in~}L^2(\R)\,.$$ Since $H_{\rm
harm}-\sqrt{\frac{k_2}2}\,(2n-1)$ can be inverted in the orthogonal
complement of the $n$-th eigenfunction $f_n(\sigma)$,  we may select
$v_1(\sigma)$ to be
$$v_1(\sigma)=\big(H_{\rm
harm}-\sqrt{\frac{k_2}2}\,(2n-1)\big)^{-1}(h_1(\sigma))\,.$$
%
As a
consequence of the choice of $\zeta_0$ and $v_1(\sigma)$, we get
that, for all $\sigma$,
\begin{multline*}
w_1(\sigma,\cdot):=-u_0(\cdot)\big(H_{\rm
harm}-\sqrt{\frac{k_2}2}\,(2n-1)\big)v_1(\sigma)-(F_0-\zeta_0)u_0(\cdot)f_n(\sigma)\\
=-u_0(\cdot)h_1(\sigma)-(F_0-\zeta_0)u_0(\cdot)f_n(\sigma)\end{multline*}
is in the orthogonal complement of  $\{u_0(\tau)\}$ in $L^2(\R_+)$,
i.e.
$$\int_0^\infty w_1(\sigma,\tau) u_0(\tau)\,d\tau=0\,.$$
 Furthermore, since
$F_0=q_{1,0}\partial_\tau+q_{2,0}\partial_\sigma^2+q_{3,0}\partial_\sigma$
and every $q_{i,0}$ is a polynomial in $(\sigma,\tau)$, then the
function $w_1(\sigma,\tau)$ can be expressed in the form
\begin{equation}\label{eq:decw1}
w_1(\sigma,\tau)=-h_1(\sigma)u_0(\tau)+G_1(\sigma)u_1(\tau)+G_2(\sigma)u_2(\tau)+G_3(\sigma)u_3(\tau)\,,
\end{equation}
where the functions $h_1$, $u_i$ and $G_i$ are Schwartz functions
(they inherit this property from the functions $u_0$ and $f_n$).

The operator $P_0+1$ can be inverted in the orthogonal complement of
 $\{u_0(\tau)\}$, and the inverse is an operator in
$L^2(\R_+)$  which sends $\mathcal S (\overline{\mathbb R_+})$ into
itself.  The proof of this is standard and follows the same
argument as in \cite[Lemma~A.5]{FH}. In that way, we may select
$g_0(\sigma,\tau)$ to be
$$g_0(\sigma,\cdot)=(P_0+1)^{-1}\big(w_1(\sigma,\cdot)\big)\,,$$
and this function is in the Schwarz space $\mathcal
S(\R\times\overline\R_+)$, thanks to the decomposition of $w_1$ in
\eqref{eq:decw1}.

{ Now, for $M=1$, the estimate in \eqref{eq:formalef} is verified. Furthermore, returning to \eqref{eq:formalef+} we have proved that:
\begin{equation}\label{eq:formalef++}
\Big(\widetilde{\mathcal L}_{h,0}-\mu_0\Big)\Psi_0=h\sum_{j=0}^{10}h^{j/8}r_{0,j}(\sigma,\tau)\,,
\end{equation}
where $r_{0,j}(\sigma,\tau)$ are Schwartz functions.}
\subsubsection*{The iteration process}~\\
{ Here we suppose that we have selected
$(\zeta_0,\cdots,\zeta_M;\Psi_0,\cdots\Psi_M)$ such that \eqref{eq:Lh-LhM} and the following induction hypothesis are satisfied:
\begin{equation}\label{eq:formalef+++}
\Big(\widetilde{\mathcal L}_{h,M}-\mu_M\Big)\Psi_M=h^{\frac78+\frac{M+1}8}\sum_{j=0}^{N_M}h^{j/8}r_{M,j}(\sigma,\tau)\,,
\end{equation}
where $r_{M,j}(\sigma,\tau)$ are Schwartz functions.
Thanks to \eqref{eq:formalef++}, the condition in \eqref{eq:formalef+++} holds for $M=0$.
We have to select $\zeta_{M+1}$
and two functions $v_{M+2}(\sigma)$, $g_{M+1}(\sigma,\tau)$ such
that,
$$\Psi_{M+1}=\Psi_M+h^{\frac{M+2}8}u_0(\tau)v_{M+1}(\sigma)+h^{\frac78+\frac{M+1}8}g_{M+1}(\sigma,\tau)\quad{\rm
and}\quad \mu_{M+1}=\mu_M+\zeta_{M+1}h^{\frac78+\frac{M+1}8}$$ satisfy \eqref{eq:Lh-LhM} and \eqref{eq:formalef+++}  for $M$ replaced by $M+1$ (and some collection of Schwartz functions $r_{M+1,j}(\sigma,\tau)$).}

Notice that we have
$$\widetilde{\mathcal L}_{h,M+1}=\widetilde{\mathcal
L}_{h,M}+h^{\frac78+\frac{M+1}8}Q_{M+1}\,,$$
%
and
\begin{align}
&\Big(\widetilde{\mathcal L}_{h,M+1}-\mu_{M+1}\Big)\Psi_{M+1}\nonumber\\
&=\Big(\widetilde{\mathcal L}_{h,M}-\mu_{M}\Big)\Psi_{M}\nonumber\\
&~+h^{\frac78+\frac{M+1}8}\Big[(P_0+1)g_{M+1}(\sigma,\tau)+u_0(\tau)\big(H_{\rm harm}-\sqrt{\frac{k_2}2}\,(2n-1)\big)v_{M+2}(\sigma)+(Q_{M+1}-\zeta_{M+1})u_0(\tau)f_n(\sigma)\Big]\nonumber\\
&~+h^{\frac78+\frac{M+1}8}\Big[\big(Q_{M+1}-\zeta_{M+1}\big)\big(\Psi_M-u_0(\tau)f_n(\sigma)\big)\Big]\nonumber\\
&~+h^{\frac78+\frac{M+1}8}\Big[
\big(h^{1/2}P_2+h^{3/4}P_3+h^{7/8}F_{M+1}-(\mu_M+1)\big)g_{M+1}(\sigma,\tau)\Big]\nonumber\\
&~+h^{\frac78+\frac{M+1}8}\Big[h^{1/8}F_{M+1}-\big(\mu_{M+1}+1+h^{1/2}\kappa_{\max}-h^{3/4}\sqrt{\frac{k_2}2}\,(2n-1)\big)\Big]u_0(\tau)v_{M+2}(\sigma)\,.\label{eq:formalef+'}
\end{align}
It is sufficient to select $(\zeta_{M+1},v_{M+2},g_{M+1})$ such that
\begin{multline*}
(P_0+1)g_{M+1}(\sigma,\tau)+u_0(\tau)\big(H_{\rm
harm}-\sqrt{\frac{k_2}2}\,(2n-1)\big)v_{M+2}(\sigma)\\
+(Q_{M+1}-\zeta_{M+1})u_0(\tau)f_n(\sigma){ +r_{M,0}(\sigma,\tau)}=0\,.\end{multline*}
We select $\zeta_{M+1}$ such that
$$(Q_{M+1}-\zeta_{M+1})u_0(\tau)f_n(\sigma)+r_{M,0}(\sigma,\tau)
\perp u_0(\tau)f_n(\sigma) \quad{\rm
in}~L^2(\R\times\R_+)\,,$$
i.e.
\begin{equation}\label{eq:zeta}
\zeta_{M+1}=\langle Q_{M+1}(u_0\otimes f_n),u_0\otimes f_n\rangle_{L^2(\R\times\R_+)}+\langle r_{M,0},u_0\otimes f_n\rangle_{L^2(\R\times\R_+)}\,.
\end{equation}
Consequently, 
$$h_{M+2}(\sigma):=-\int_0^\infty u_0(\tau)\Big(
(Q_{M+1}-\zeta_{M+1})u_0(\tau)f_n(\sigma)+{ r_{M,0}(\sigma,\tau)}\Big)\,d\tau\perp
f_n(\sigma)\quad{\rm in~}L^2(\R)\,.$$ We  select $v_{M+2}(\sigma)$ as follows
$$v_{M+2}(\sigma)=\big(H_{\rm
harm}-\sqrt{\frac{k_2}2}\,(2n-1)\big)^{-1}(h_{M+2}(\sigma))\,.$$ As
a consequence of the choice of $\zeta_{M+1}$ and $v_{M+2}(\sigma)$,
we get that, for all $\sigma$,
$$
w_{M+2}(\sigma,\cdot):=-u_0(\cdot)\big(H_{\rm
harm}-\sqrt{\frac{k_2}2}\,(2n-1)\big)v_{M+2}(\sigma)-(Q_{M+1}-\zeta_{M+1})u_0(\tau)f_n(\sigma)-r_{M,0}(\sigma,\tau)
\perp u_0(\cdot)$$  in $L^2(\R_+)$.\\
 Finally, we select
$g_{M+1}(\sigma,\tau)$ as follows
$$g_{M+1}(\sigma,\cdot)=(P_0+1)^{-1}\big(w_{M+2}(\sigma,\cdot)\big)\,.$$
{ Having selected $\zeta_{M+1}$, $v_{M+2}$ and $g_{M+1}$, the expansions in \eqref{eq:formalef+'} and \eqref{eq:formalef+++} yield that
$$
\Big(\widetilde{\mathcal L}_{h,M+1}-\mu_M\Big)\Psi_{M+1}=h^{\frac78+\frac{M+2}8}\sum_{j=0}^{N_{M+1}}h^{j/8}r_{M+1,j}(\sigma,\tau)\,,
$$
where $r_{M+1,j}(\sigma,\tau)$ are Schwartz functions.}

\subsection{Vanishing of even indexed $\zeta_{j,n}$}\label{sec:zeta} ~\\
Let us start by observing that $f_1(\sigma)$ is a Gaussian, hence an
even function. The other eigenfunctions $f_n(\sigma)$ are generated
from $f_1(\sigma)$ by the recursive relation
$$f_n(\sigma)=c_n(L^+)^{n-1}f_1(\sigma)\,,$$
where the constant $c_n$ normalizes $f_n$ in $L^2(\R)$, and
$L^+=-\frac{d}{d\sigma}+\sigma$. The operator $L^+$ transforms even
functions to odd functions, and odd functions to even functions. In
that way, we observe that, $f_n(\sigma)$ is even when $n$ is odd,
and odd otherwise.

Next we examine carefully the terms $q_k(\sigma,\tau)$,
$k\in\{1,2,3\}$. Thanks to \eqref{eq:expq}, we see that,
$$
q_1(\sigma,\tau)=h^{-3/8}\alpha(h^{1/8}\sigma,h^{1/2}\tau)\,,$$ with
$\alpha(x,y)$ being a smooth function. We will abuse notation and
identify $q_1$ with its Taylor expansion. In that way we write,
\begin{multline*}
q_1(\sigma,\tau)=h^{-3/8}\sum_{m=3}^\infty\sum_{k=0}^m
\frac{\partial^m\alpha}{\partial x^k\partial
y^{m-k}}(0,0)\,(h^{1/8})^k(h^{1/2})^{m-k}\sigma^k\tau^{m-k}\\=\sum_{m=3}^\infty\sum_{k=0}^m
\frac{\partial^m\alpha}{\partial x^k\partial
y^{m-k}}(0,0)\,h^{\frac{k-3}8}\,h^{\frac{4(m-k)}8}\,\sigma^k\tau^{m-k}\,.\end{multline*}
Consequently, in the above expansion of $q_1(\sigma,\tau)$, we see
that the coefficients of the terms with odd powers of $\sigma$ have
even powers of $h^{1/8}$ (actually $k$ is odd iff $(k-3)+4(m-k)$ is
even). Returning to the expansion of $q_1(\sigma,\tau)$ in
\eqref{eq:expqj}, we get that, for all even $j$, the function
$q_{1,j}(\sigma,\tau)$ is an odd function of $\sigma$, while for odd
$j$, it is an even function of $\sigma$.

A similar analysis of the terms $q_2(\sigma,\tau)$ and
$q_3(\sigma,\tau)$ shows that, for all $j$, the term
$q_{2,j}(\sigma,\tau)$ is an odd function of $\sigma$ iff $j$ is
even, and $q_{3,j}(\sigma,\tau)$ is an odd function of $\sigma$ iff
$j$ is odd.

By the discussion on the parity of the function $f_n(\sigma)$, we
see that, if $M$ is even,
$$Q_M u_0(\tau) f_n(\sigma)=u_0(\tau)\Big(
-q_{1,M}f_n(\sigma)+q_{2,M}f_n''(\sigma)+q_{3,j}f'_n(\sigma)\Big)$$
is an odd function of $\sigma$ when $n$ is odd, and  it is an even
function  when $n$ is even. Consequently, for every $n$, if $M$ is
even, then the function
$$\Big(Q_M u_0(\tau) f_n(\sigma)\Big)
u_0(\tau)f_n(\sigma)$$ is an odd function of $\sigma$. \\
Consequently,
 for all values of $\tau$,
$$\int_\R \Big(Q_M u_0(\tau) f_n(\sigma)\Big)
u_0(\tau)f_n(\sigma)\,d\sigma=0\,.$$
{ Similarly, one can follow by recursion the parity of $r_{M,0}$ with respect to $\sigma$ and get for $M$ even
$$\int_\R r_{M,0}(\sigma,\tau)
u_0(\tau)f_n(\sigma)\,d\sigma=0\,.$$
Inserting this into \eqref{eq:zeta0} and \eqref{eq:zeta}, we get that $\zeta_{M,n}=0$ for every odd $M$.\\
}

\section{Lower bounds for the low-lying eigenvalues}\label{sec:lb}
\subsection{Useful operators}
Let $\rho\in(0,1)$. The number $\rho$ will be selected as
$\rho=\frac18+\eta$ with $\eta\in(0,\frac1{24})$.  In the
sequel, $\sigma$ and $\tau$ will denoted two variables. The variable
$\sigma$ will live in $\R$ (or in the interval
$(h^{-\eta},h^{\eta})$. The variable $\tau$ will live in the
interval $(0,h^{-\rho})$). When writing $\R_\sigma$, we mean that
that the set of real numbers $\R$ is the space where the variable
$\sigma$ varies.

We introduce the following operators in $\mathcal L
\big(L^2(\R_\sigma), L^2(\R_\sigma\times(0,h^{-\rho}))\big)$, and
in\break$\mathcal L
\big(L^2(\R_\sigma\times(0,h^{-\rho})),L^2(\R_\sigma)\big)$
respectively.

 The $L^2$-spaces that appear are equipped with the
standard $L^2$-norm. Recall the normalized eigenfunction $u_{0,h}$
introduced  in Remark~\ref{rem:es}. Define the {\it bounded}
operators $R_0^+$ and $R_0^-$  as follows:
\begin{align}
&(R_0^+v)(\sigma,\tau)=u_{0,h}(\tau)v (\sigma)\,,\label{eq:R0+}\\
&(R_0^-v)(\sigma)=\displaystyle\int_0^{h^{-\rho}}u_{0,h}(\tau)\,v(\sigma,\tau)\,d\tau\,.\label{eq:R0-}\\
\end{align}
Note that $R_0^+R_0^-$ is the orthogonal projection, in $L^2(\mathbb
R\times (0,h^{-\rho}))$, on the space $L^2(\mathbb R_\sigma) \otimes
\{ \mathbb R u_{0,h}\}$. The operator norms of $R_0^+$ and $R_0^-$
are both  equal to  $1$. The function $ R_0^+v\in
L^2(\R_\sigma\times(0,h^{-\rho}))$ will be extended by zero to a
function in $L^2(\R^2)$.

In the subsequent subsections, we will need two operators that
approximate $R_0^-$. Define the two operators $R_{0,h}^-$ and
$\widetilde R_{0,h}^-$ in $\mathcal
L\big(L^2((-h^{-\eta},h^{-\eta})\times
(0,h^{-\rho}));L^2((-h^{-\eta},h^{-\eta}))\big)$ as follows{:}
\begin{align}
&R_{0,h}^-v=\displaystyle\int_0^{h^{-\rho}}u_{0,h}(\tau)\,v(\sigma,\tau)\,\widetilde a(\sigma,\tau)\,d\tau\,,\label{eq:R0-h}\\
&\widetilde R_{0,h}^-v=\displaystyle\int_0^{h^{-\rho}}u_{0,h}(\tau)\,v(\sigma,\tau)\,\widetilde a(\sigma,\tau)^{-1}\,d\tau\,,\label{eq:R0-h'}
\end{align}
where  $\widetilde a$ is introduced in \eqref{eq:newa}.

 The
operators $R_{0,h}^-$ and $\widetilde R_{0,h}^-$ are approximations
of $R_0^-$ relative to the operator norm in $\mathcal
L\big(L^2((-h^{-\eta},h^{-\eta})\times
(0,h^{-\rho}));L^2((-h^{-\eta},h^{-\eta})\big)$ since, by
\eqref{atilde}, \eqref{atilde-1} and \eqref{estabc},
\begin{equation}\label{eq:R0h-R0}
\|R_{0,h}^--R_0^-\|\leq Ch^{\frac12-\rho}\quad{\rm and}\quad \|\widetilde R_{0,h}^--R_0^-\|\leq Ch^{\frac12-\rho}\,.
\end{equation}

\subsection{Approximate eigenfunctions for the harmonic
oscillator}

\subsubsection{Construction of the trial states and preliminary
estimates} ~\\
We use the eigenfunctions of the Laplacian $\mathcal L_h$ to
construct approximate eigenfunctions of the harmonic oscillator
$$H_{\rm harm}=-\frac{d^2}{d\sigma^2}+\frac{k_2}2\,\sigma^2\quad{\rm
in}~L^2(\R)\,.$$

Let $n\in\mathbb N$, $\eta\in(0,1/8)$,  and $v_{h,n}$ be a
normalized eigenfunction of  $\mathcal L_h$ associated with the
$n$-th eigenvalue $\mu_n(h)$. 

Recall  from Theorem \ref{thm:st} that $\mu_n(h)$  satisfies
$$\mu_n(h)\leq -h-\kappa_{\max}h^{3/2}+M_n h^{7/4}\,,$$
where $M_n>0$ is a constant. Define the functions $\psi_{h,n}$ and
$\widetilde\psi_{h,n}$  as follows:
\begin{align}&\psi_{h,n}(s,t)=
\chi\left(\frac{s}{h^{\frac18-\eta}}\right) \chi\left(\frac{t}{h^{\frac12-\rho}}\right)
v_{h,n}(s,t)\,,\label{eq:psih0}\\
&\widetilde\psi_{h,n}(\sigma,\tau)=h^{5/16} \psi_{h,n}(h^{1/8}\sigma,h^{1/2}\tau)\,.\label{eq:psih}
\end{align}
The function $\chi\in C_c^\infty(\R)$ satisfies $0\leq \chi\leq 1$
in $\R$, $\chi=1$ in $[-\frac12,\frac12]$ and the support of $\chi$
is in $[-1,1]$.

Notice that the function $\psi_{h,n}$ is defined in $\Omega$ by
means of the boundary coordinates in \eqref{BC}. %
\begin{pro}\label{lem:psih}
There holds:
\begin{enumerate}
\item $\displaystyle\left\|\mathcal
L_h\psi_{h,n}-\mu_n(h)\psi_{h,n}\right\|_2=  \mathcal
O(h^\infty)$\,;
\item $\displaystyle\left\|\psi_{h,n}-v_{h,n}\right\|_2 =\mathcal
O(h^\infty)$\,;
\item $\displaystyle\left\|\widetilde\psi_{h,n}-R_0^+R_0^-\widetilde\psi_{h,n}\right\|_2 =
o(1)$\,;
\item
$\displaystyle\|R_0^-\widetilde\psi_{h,n}\|_{L^2(\R)}=1+
o(1)$\,;\\
\item If $i\not=j$, then the functions $R_0^-\widetilde\psi_{h,i}$
and $R_0^-\widetilde\psi_{h,j}$ are almost orthogonal in the
following sense,
$$\langle
R_0^-\widetilde\psi_{h,i}\,,\,R_0^-\widetilde\psi_{h,j}\rangle_{L^2(\R)}=o(1)\,.$$
\end{enumerate}
\end{pro}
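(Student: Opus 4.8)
The plan is to prove the five assertions in order, since each one feeds into the next. For item (1), I would start from the eigenvalue equation $\mathcal L_h v_{h,n}=\mu_n(h)v_{h,n}$ and commute $\mathcal L_h$ past the cut-offs $\chi(s/h^{1/8-\eta})$ and $\chi(t/h^{1/2-\rho})$. The commutator $[\mathcal L_h,\chi]$ produces terms supported where the cut-offs have nonzero derivatives, i.e.\ on $\{|s|\sim h^{1/8-\eta}\}$ and $\{t\sim h^{1/2-\rho}\}$; on these sets the decay estimates of Theorem~\ref{thm:dec} (Agmon decay in the normal variable at rate $h^{-1/2}$) and Theorem~\ref{thm:dec-h} (Gaussian decay in $s$ at rate $h^{-1/4}$, in the region $t\lesssim h^{1/8}$) force $v_{h,n}$ and $h\nabla v_{h,n}$ to be $\mathcal O(h^\infty)$ in $L^2$ there. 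This gives $\|\mathcal L_h\psi_{h,n}-\mu_n(h)\psi_{h,n}\|_2=\mathcal O(h^\infty)$. One must be slightly careful that $\rho=\frac18+\eta<\frac12$, so $h^{1/2-\rho}\ll h^{1/8}$ and the region where the $t$-cutoff lives is well inside the tube where the localization theorems apply; I would remark on that explicitly.

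For item (2), the same Agmon estimates show $\|(1-\chi)v_{h,n}\|_2=\mathcal O(h^\infty)$ directly, since $1-\chi$ is supported in the region of exponential smallness. Item (3) is where the one-dimensional spectral analysis enters: after rescaling to $(\sigma,\tau)$ and passing to the weighted space $L^2(\tilde a\,d\sigma d\tau)$, one uses the lower bound for the quadratic form coming from Theorem~\ref{thm:lb-qf} together with the gap $\lambda_2(\mathcal H_{0,h})\ge 0$ versus $\lambda_1(\mathcal H_{0,h})=-1+\mathcal O(h^\infty)$ (Lemma~\ref{lem:1DL}). Concretely, writing $\widetilde\psi_{h,n}=R_0^+R_0^-\widetilde\psi_{h,n}+w$ with $w$ in the orthogonal complement of $\mathbb R u_{0,h}$ fibrewise, the quadratic form $q_h$ evaluated on $w$ is bounded below by $h\lambda_2(\mathcal H_{0,h})\|w\|^2\ge 0$ up to lower-order curvature terms, while the total energy is $\mu_n(h)\approx -h$; comparing the two forces $\|w\|_2=o(1)$. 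This also uses Remark~\ref{rem:partial-s}, $\|\partial_s(\chi_{1,h}u_h)\|_2\le Ch^{-1/8}$, to control the $\partial_\sigma$ cross terms after rescaling.

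Item (4) follows from (3) together with $\|\widetilde\psi_{h,n}\|_2=1+o(1)$ (which itself follows from (2) and the normalization of $v_{h,n}$, plus the Jacobian $\tilde a=1+\mathcal O(h^{1/2})$), since $\|R_0^+R_0^-\widetilde\psi_{h,n}\|_2=\|R_0^-\widetilde\psi_{h,n}\|_{L^2(\R)}$ because $u_{0,h}$ is $L^2$-normalized and $R_0^+$ is an isometry. For item (5), orthogonality of distinct eigenfunctions $v_{h,i}\perp v_{h,j}$ in $L^2(\Omega)$ transfers, via (2) and the change of variables, to $\langle\widetilde\psi_{h,i},\widetilde\psi_{h,j}\rangle_{L^2(\tilde a\,d\sigma d\tau)}=o(1)$, hence in the flat $L^2$ as well; then using (3) to replace each $\widetilde\psi_{h,i}$ by $R_0^+R_0^-\widetilde\psi_{h,i}$ up to $o(1)$, and the identity $\langle R_0^+a,R_0^+b\rangle_{L^2(\R^2)}=\langle a,b\rangle_{L^2(\R)}$, gives $\langle R_0^-\widetilde\psi_{h,i},R_0^-\widetilde\psi_{h,j}\rangle_{L^2(\R)}=o(1)$.

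The main obstacle I expect is item (3): one must carefully track the rescaled quadratic form, separate the $\tau$-energy (handled by the $1D$ operators $\mathcal H_{0,h}$, $\mathcal H_{\beta,h}$ of Section~\ref{section4}) from the $\sigma$-energy and the Jacobian corrections, and verify that the error from using $\widetilde a$ instead of $1$ in the weight is $\mathcal O(h^{1/2-\rho})=o(1)$ on the support of the cut-offs, via \eqref{eq:R0h-R0}. The bookkeeping of which terms are $o(1)$ versus genuinely contributing is the delicate point; everything else is either a direct consequence of the already-established localization theorems or a routine change of variables.
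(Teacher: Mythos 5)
Your proposal is correct and follows essentially the same route as the paper: (1)--(2) from the Agmon estimates of Theorems~\ref{thm:dec} and \ref{thm:dec-h} via commutators with the cut-offs, (3) by the fibrewise decomposition onto $\mathbb{R}u_{0,h}$ and its orthogonal complement combined with the spectral gap $\lambda_1(\mathcal H_{0,h})\approx-1$ versus $\lambda_2(\mathcal H_{0,h})\ge 0$ against the total energy $\mu_n(h)\approx -h$, and (4)--(5) by the isometry property of $R_0^+$ and orthogonality of distinct eigenfunctions. The only (harmless) overhead is in (3): the paper does not need Theorem~\ref{thm:lb-qf} or Remark~\ref{rem:partial-s} there, since the quadratic form is diagonal in $(s,t)$ and the $\partial_s$ contribution is simply nonnegative and discarded in the lower bound.
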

\begin{proof}
The assertions in (1) and (2) follow from Theorems~\ref{thm:dec} and
\ref{thm:dec-h}.

{\it Proof of (3):}\\
Thanks to the assertions in (1) and (2), we may write,
$$q_h(\psi_{h,n})\leq \mu_n(h)+\mathcal O(h^\infty)\,.$$
We express $q_h(\psi_{h,n})$ in boundary coordinates to write  (see
\eqref{eq:bc;qf})
\begin{multline}\label{eq:cor1}
\int_{-|\partial\Omega|/2}^{|\partial\Omega|/2}\int_0^{h^{\frac12-\rho}}
|h\partial_t \psi_{h,n}|^2\big(1-t\,\kappa(s)\big)\,dt
ds-h^{3/2}\int_{-|\partial\Omega|/2}^{|\partial\Omega|/2}\big| \psi_{h,n}(s,t=0)\big|^2d\sigma\\
\leq \mu_{n}(h) +\mathcal O(h^{\infty})\,.
\end{multline}
Using  that $\kappa(\cdot)$ is bounded, $\rho\in(0,1/2)$ and
Theorem~\ref{thm:dec}, we get
$$\left|\int_{-|\partial\Omega|/2}^{|\partial\Omega|/2}\int_0^{h^{\frac12-\rho}}|h \partial_t
\psi_{h,n}|^2\,  t\, \kappa(s)\,dt ds\right|\leq Chh^{\frac12-\rho}\,,$$ and, as a consequence of \eqref{eq:cor1},
$$
\int_{-|\partial\Omega|/2}^{|\partial\Omega|/2}\int_0^{h^{\frac12-\rho}}|h\partial_t
\psi_{h,n}|^2\,dt
ds-h^{3/2}\int_{-|\partial\Omega|/2}^{|\partial\Omega|/2}\big|\psi_{h,n}(s,t=0)\big|^2d\sigma
\leq \mu_{n}(h) +\mathcal O(h^{\frac32-\rho})\,.
$$
We apply the change of variables $(s,t)=(h^{1/8}\sigma,h^{1/2}\tau)$
and note that the support of the function $\widetilde\psi_{h,n}$ is
in $\{(\sigma,\tau)\in (-h^{-\eta},h^{\eta})\times [0,h^{-\rho})\}$.
In that way,  we obtain
\begin{equation}\label{eq:cor2}
h\left(\int_{-h^{-\eta}}^{h^{-\eta}}\int_0^{h^{-\rho}}|\partial_\tau
\widetilde\psi_{h,n}|^2\,d\tau
d\sigma-\int_{-h^{-\eta}}^{h^{-\eta}}\big|\widetilde\psi_{h,n}(\sigma,\tau=0)\big|^2d\sigma\right)
\leq \mu_{n}(h) +\mathcal O(h^{\frac32-\rho})\,.\end{equation}
 We have the following simple decomposition of $\widetilde\psi_{h,n}$,
$$\widetilde\psi_{h,n}(\sigma,\tau)=R_0^+R_0^-\widetilde\psi_{h,n}(\sigma,\tau)+\Big(\widetilde\psi_{h,n}-R_0^+R_0^-\widetilde\psi_{h,n}\Big)(\sigma,\tau)\,,
$$
with the property for every fixed $\sigma$,  the functions
$R_0^+R_0^-\widetilde\psi_{h,n}(\sigma,\cdot)$ and
$\Big(\widetilde\psi_{h,n}-R_0^+R_0^-\widetilde\psi_{h,n}\Big)(\sigma,\cdot)$
are orthogonal in $L^2((0,h^{-\rho}))$. Actually, it follows simply
from the definition of $R_0^+$ and $R_0^-$ that,
$$\forall~\sigma\in\R\,,\quad\int_0^{h^{-\rho}}R_0^+R_0^-\widetilde\psi_{h,n}(\sigma,\tau)\times
\Big(\widetilde\psi_{h,n}-R_0^+R_0^-\widetilde\psi_{h,n}\Big)(\sigma,\tau)\,d\tau=0\,.$$
%
%
Furthermore,  for every fixed $\sigma$, these functions are in the
domain of the operator $\mathcal H_{0,h}$ in
\eqref{eq:H0}-\eqref{eq:DomH0},
$R_0^+R_0^-\widetilde\psi_{h,n}(\sigma,\cdot)$ is in the first
eigenspace of the operator $\mathcal H_{0,h}$ and
$\Big(\widetilde\psi_{h,n}-R_0^+R_0^-\widetilde\psi_{h,n}\Big)(\sigma,\cdot)$
is in its orthogonal complement.
%
In that way, we get in light of the min-max principle and
Lemma~\ref{lem:1DL},  that a.e. in $\sigma$,
\begin{align*}
&\int_0^{h^{-\rho}}|\partial_\tau
\widetilde\psi_{h,n}(\sigma,\tau)|^2\,d\tau-|\widetilde\psi_{h,n}(\sigma,\tau=0)\big|^2\\
&\quad\geq
\lambda_1(\mathcal H_{0,h})\int_0^{h^{-\rho}}|
R_0^+R_0^-\widetilde\psi_{h,n}|^2\,d\tau+\lambda_2(\mathcal H_{0,h})\int_0^{h^{-\rho}}|
\widetilde\psi_{h,n}-R_0^+R_0^-\widetilde\psi_{h,n}|^2\,d\tau\\
&\quad\geq \big(-1+o(1)\big)\int_0^{h^{-\rho}}|
R_0^+R_0^-\widetilde\psi_{h,n}|^2\,d \tau
\end{align*}
Integrating over $\sigma$ in $(-h^{-\eta}, h^{-\eta})$ and using
also   \eqref{eq:cor2} and the property   that $R_0^+R_0^-$ is an
orthogonal projector,  we get
\begin{align*}
\big(-h+ho(1)\big)h&\int_{-h^{-\eta}}^{h^{-\eta}}\int_0^{h^{-\rho}}|R_0^+R_0^-\widetilde\psi_{h,n}|^2\,d\tau
d\sigma\\
&\leq\big(\mu_n(h)+\mathcal
O(h^{\frac32-\rho})\big)\int_{-h^{-\eta}}^{h^{-\eta}}\int_0^{h^{-\rho}}|R_0^+R_0^-\widetilde\psi_{h,n}|^2\,d\tau
d\sigma\\
&\qquad+\big(\mu_n(h)
+\mathcal
O(h^{\frac32-\rho})\big)\int_{-h^{-\eta}}^{h^{-\eta}}\int_0^{h^{-\rho}}|\widetilde\psi_{h,n}-R_0^+R_0^-\widetilde\psi_{h,n}|^2\,d\tau
d\sigma\,.
\end{align*}
Using that $ \mu_{n}(h)\leq-h+ho(1)$ and that $\rho\in(0,\frac12)$,
we get
$$\int_{-h^{-\eta}}^{h^{-\eta}}\int_0^{h^{-\rho}}|\widetilde\psi_{h,n}-R_0^+R_0^-\widetilde\psi_{h,n}|^2\,d\tau
d\sigma  =
o(1)\int_{-h^{-\eta}}^{h^{-\eta}}\int_0^{h^{-\rho}}|R_0^+R_0^-\widetilde\psi_{h,n}|^2\,
d\sigma\,.$$ Thus we get (3).

{\it Proof of (4):}\\
From the assertions (2) and (3)  and the normalization of $v_{h,n}$,
we get
$$\|R_0^+R_0^-\widetilde\psi_{h,n}\|_2=1+o(1)\,.$$
Using the definition of $R_0^+$, we get
$$\|R_0^+R_0^-\widetilde\psi_{h,n}\|_2=\|R_0^-\widetilde\psi_{h,n}\|_2\,,$$
and the assertion in (4) follows.

{\it Proof of (5):}\\
The functions $v_{h,i}$ and $v_{h,j}$ are orthogonal  if $i\neq j$
in
$L^2(\Omega)$.\\
By  assertions  (2) and (3), we get that
$R_0^+R_0^-\widetilde\psi_{h,i}$ and
$R_0^+R_0^-\widetilde\psi_{h,j}$ are {\it almost} orthogonal, i.e.
their inner product is $o(1)$. By definition of $R_0^+$, we have
$$\langle
R_0^+R_0^-\widetilde\psi_{h,i},R_0^+R_0^-\widetilde\psi_{h,j}\rangle_{L^2(\R^2)}=\langle
R_0^-\widetilde\psi_{h,i},R_0^-\widetilde\psi_{h,j}\rangle_{L^2(\R)}\,,$$
and  the assertion in (5) follows.
\end{proof}

It results from \eqref{eq:R0h-R0} the following simple corollary of
Proposition~\ref{lem:psih}.

\begin{cor}\label{corol:psih}
There holds,
\begin{enumerate}
\item
$\displaystyle\|\widetilde
R_{0,h}^-\widetilde\psi_{h,n}\|_{L^2(\R)}=1+
o(1)$\,;\\
\item If $i\not=j$, then the  functions $\widetilde R_{0,h}^-\widetilde\psi_{h,i}$
and $\widetilde R_{0,h}^-\widetilde\psi_{h,j}$ are almost
orthogonal in the following sense,
$$\langle
\widetilde R_{0,h}^-\widetilde\psi_{h,i}\,,\,\widetilde R_{0,h}^-\widetilde\psi_{h,j}\rangle_{L^2(\R)}=o(1)\,.$$
\end{enumerate}
\end{cor}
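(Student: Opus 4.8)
The plan is to deduce the corollary from Proposition~\ref{lem:psih} by treating $\widetilde R_{0,h}^-$ as a bounded perturbation of $R_0^-$, using the operator-norm bound \eqref{eq:R0h-R0}. First I would record that $\|\widetilde\psi_{h,n}\|_{L^2(\R^2)}$ is bounded uniformly in $h$: the function $\widetilde\psi_{h,n}$ is supported in $(-h^{-\eta},h^{-\eta})\times[0,h^{-\rho})$, and the change of variables in \eqref{eq:psih} together with \eqref{eq:bc;n} gives $\|\widetilde\psi_{h,n}\|_{L^2(\R^2)}^2=\|\psi_{h,n}\|_{L^2(\Omega)}^2\big(1+\mathcal O(h^{\frac12-\rho})\big)$, the error coming from the Jacobian $1-t\kappa(s)$; then Proposition~\ref{lem:psih}(2) and the normalization of $v_{h,n}$ yield $\|\psi_{h,n}\|_{L^2(\Omega)}=1+\mathcal O(h^\infty)$, so $\|\widetilde\psi_{h,n}\|_{2}=1+o(1)\leq C$.

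For assertion (1), since $\rho<\tfrac12$, the estimate \eqref{eq:R0h-R0} applied to $\widetilde\psi_{h,n}$ (whose support lets us compare $R_0^-$ and $\widetilde R_{0,h}^-$ on the same space and extend by zero outside $(-h^{-\eta},h^{-\eta})$) gives
$$\big\|(\widetilde R_{0,h}^--R_0^-)\widetilde\psi_{h,n}\big\|_{L^2(\R)}\leq Ch^{\frac12-\rho}\|\widetilde\psi_{h,n}\|_{2}=o(1)\,.$$
Combining this with the triangle inequality and Proposition~\ref{lem:psih}(4), I obtain $\|\widetilde R_{0,h}^-\widetilde\psi_{h,n}\|_{L^2(\R)}=\|R_0^-\widetilde\psi_{h,n}\|_{L^2(\R)}+o(1)=1+o(1)$.

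For assertion (2), for $i\neq j$ I would expand
$$\langle\widetilde R_{0,h}^-\widetilde\psi_{h,i},\widetilde R_{0,h}^-\widetilde\psi_{h,j}\rangle_{L^2(\R)}=\langle R_0^-\widetilde\psi_{h,i},R_0^-\widetilde\psi_{h,j}\rangle_{L^2(\R)}+E_h\,,$$
where $E_h$ gathers the two cross terms containing a factor $(\widetilde R_{0,h}^--R_0^-)\widetilde\psi_{h,i}$ or $(\widetilde R_{0,h}^--R_0^-)\widetilde\psi_{h,j}$; by Cauchy--Schwarz, \eqref{eq:R0h-R0}, the uniform bound on $\|\widetilde\psi_{h,k}\|_2$ and assertion (1) just proved, one has $E_h=\mathcal O(h^{\frac12-\rho})=o(1)$, while the main term is $o(1)$ by Proposition~\ref{lem:psih}(5); hence the inner product is $o(1)$. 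There is no real obstacle here: the argument is a routine bounded-perturbation estimate, and the only point needing a little care is the domain bookkeeping --- namely checking, via the support of $\widetilde\psi_{h,n}$ and the convention of extension by zero, that \eqref{eq:R0h-R0} applies verbatim to the vectors at hand, and that all the $L^2$-norms entering the estimates are bounded uniformly in $h$.
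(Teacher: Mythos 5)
Your proof is correct and is exactly the argument the paper intends: the corollary is stated as an immediate consequence of the operator-norm bound \eqref{eq:R0h-R0} combined with assertions (4) and (5) of Proposition~\ref{lem:psih}, which is precisely your bounded-perturbation estimate. The extra bookkeeping you supply (the uniform bound $\|\widetilde\psi_{h,n}\|_2=1+o(1)$ and the support check making \eqref{eq:R0h-R0} applicable) is accurate and simply fills in what the paper leaves implicit.
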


\subsubsection{Useful identities}

Recall the definition of the operators $P_0$, $P_2$, $P_3$ and $Q_h$
in \eqref{eq:dec-Lh}, together with the definition of the functions
$q_1$, $q_2$ and $q_3$ in \eqref{eq:dec-Lh2}-\eqref{eq:dec-Lh1}.

We introduce the operator
\begin{equation}\label{eq:newLh}
\widetilde{\mathcal L}_h=-h^{3/4}\tilde
a^{-1}\partial_\sigma(\tilde a^{-1}\partial_\sigma)-\tilde
a^{-1}\partial_\tau(\tilde a\partial_\tau)\,,
\end{equation}
with $\widetilde a$ as in \eqref{eq:newa}.
%

\begin{lem}\label{lem:id}
There holds:
\begin{enumerate}
\item $-\partial_\tau^2u_{0,h}=-u_{0,h}+\epsilon_1(h)u_{0,h}$ and $\partial_\tau u_{0,h}=-u_{0,h}+\epsilon_1(h)u_{0,h}+\epsilon_2(\tau;h)$
where
$$|\epsilon_1(h)|\leq C\exp(-2h^{-\rho})\,,\quad|\epsilon_2(\tau;h)|\leq
C\exp(-h^{-\rho})\,,$$ and $C>0$ is a constant.
\item For all $n\in\mathbb N$,
$$R_{0,h}^-\widetilde{\mathcal
L}_h\widetilde\psi_{h,n}=(-1-h^{1/2}\kappa_{\max})R_{0,h}^-\widetilde\psi_{h,n}+h^{3/4}H_{\rm harm}\widetilde R_{0,h}^-\widetilde\psi_{h,n}+h^{3/4}r(\widetilde\psi_{h,n})\,,
$$
where $k_2=-\kappa''(0)>0$,
$$H_{\rm harm}=-\frac{d^2}{d\sigma^2}+\frac{k_2}2\,\sigma^2\,,$$
and $r(\widetilde\psi_{h,n})$ is a function of $\sigma$
satisfying for some constant $C>0$ and all sufficiently small
$h$,
$$\|r(\widetilde\psi_{h,n})\|_{L^2(\R)}\leq Ch^{\frac18-3\eta}\,.$$
\end{enumerate}
\end{lem}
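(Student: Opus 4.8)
The plan is to prove (1) by direct computation from the explicit formula for $u_{0,h}$ in \eqref{u0h}, and then to prove (2) by expanding $\widetilde{\mathcal L}_h$ via \eqref{eq:dec-Lh2}--\eqref{eq:dec-Lh1} and tracking the contribution of each term after application of the operators $R_{0,h}^-$, $\widetilde R_{0,h}^-$.

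For (1), recall from Remark~\ref{rem:es} that $u_{0,h}(\tau)=A_h(\exp(-w_h\tau)-\exp(-2w_hh^{-\rho})\exp(w_h\tau))$ with $w_h=1-2(1+o(1))\exp(-2h^{-\rho})$ from \eqref{eq:wh}. First I would compute $-\partial_\tau^2 u_{0,h}=-w_h^2 u_{0,h}=\lambda_1(\mathcal H_{0,h})u_{0,h}$, and then write $-w_h^2=-1+(1-w_h^2)$; since $1-w_h^2=4(1+o(1))\exp(-2h^{-\rho})$ by \eqref{eq:lwh}, this gives $\epsilon_1(h)=1-w_h^2$ with $|\epsilon_1(h)|\le C\exp(-2h^{-\rho})$. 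For the first-order identity, differentiate: $\partial_\tau u_{0,h}=A_h(-w_h\exp(-w_h\tau)-w_h\exp(-2w_hh^{-\rho})\exp(w_h\tau))$; comparing with $-u_{0,h}$ and using the Robin condition $u_{0,h}'(0)=-u_{0,h}(0)$, the leading term is $-w_h u_{0,h}=-u_{0,h}+(1-w_h)u_{0,h}$ and $|1-w_h|\le C\exp(-2h^{-\rho})$, while the discrepancy coming from the sign flip in front of the growing exponential contributes a term bounded pointwise on $[0,h^{-\rho}]$ by $C\exp(-2w_hh^{-\rho})\exp(w_h\tau)\le C\exp(-h^{-\rho})$, which we absorb into $\epsilon_2(\tau;h)$.

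For (2), write $\widetilde{\mathcal L}_h\widetilde\psi_{h,n}$ using the decomposition $-\widetilde a^{-1}\partial_\tau(\widetilde a\partial_\tau)=-\partial_\tau^2+h^{1/2}\kappa(0)\partial_\tau+h^{3/4}\frac{\kappa''(0)}2\sigma^2\partial_\tau+h^{7/8}q_1\partial_\tau$ from \eqref{eq:dec-Lh2} and $-h^{3/4}\widetilde a^{-1}\partial_\sigma(\widetilde a^{-1}\partial_\sigma)=-h^{3/4}\partial_\sigma^2+h^{7/8}(h^{3/8}q_2\partial_\sigma^2+h^{1/2}q_3\partial_\sigma)$ from \eqref{eq:dec-Lh1}. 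Applying $R_{0,h}^-$, i.e. integrating against $u_{0,h}(\tau)\widetilde a(\sigma,\tau)\,d\tau$, I would move the $\tau$-derivatives onto $u_{0,h}$ by integration by parts (the boundary terms at $\tau=0$ cancel because $\widetilde\psi_{h,n}$ and $u_{0,h}$ both satisfy the Robin condition, and the terms at $\tau=h^{-\rho}$ vanish because $\widetilde\psi_{h,n}$ is compactly supported there); then use part (1) to replace $-\partial_\tau^2 u_{0,h}$ by $-u_{0,h}$ up to $\mathcal O(\exp(-2h^{-\rho}))$ and $\partial_\tau u_{0,h}$ by $-u_{0,h}$ up to the same exponential error plus $\epsilon_2$. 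The main terms assemble into $(-1-h^{1/2}\kappa_{\max})R_{0,h}^-\widetilde\psi_{h,n}$ — using $\kappa(0)=\kappa_{\max}$ — plus, from the $-h^{3/4}\partial_\sigma^2$ term and the $h^{3/4}\frac{\kappa''(0)}2\sigma^2\partial_\tau$ term combined with $\partial_\tau u_{0,h}\approx -u_{0,h}$, exactly $h^{3/4}(-\partial_\sigma^2+\frac{k_2}2\sigma^2)\widetilde R_{0,h}^-\widetilde\psi_{h,n}=h^{3/4}H_{\rm harm}\widetilde R_{0,h}^-\widetilde\psi_{h,n}$ (the weight $\widetilde a^{-1}$ rather than $\widetilde a$ appears here precisely because the $\partial_\tau$ in that term, after the substitution, converts the measure $\widetilde a\,d\tau$ into $\widetilde a^{-1}\,d\tau$ up to a harmless lower-order piece that goes into $r$).

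The remainder $r(\widetilde\psi_{h,n})$ collects: the exponentially small contributions from part (1); the $h^{7/8}q_1\partial_\tau$, $h^{7/8+3/8}q_2\partial_\sigma^2$, $h^{7/8+1/2}q_3\partial_\sigma$ terms after division by the prefactor $h^{3/4}$; and the cross terms produced by commuting $\widetilde a^{\pm1}$ with the derivatives (using $\partial_\sigma\widetilde a^{-1}=\mathcal O(h^{5/8}\tau)$ from \eqref{atilde-1}, \eqref{estabc}). The hard part will be the bookkeeping to show $\|r(\widetilde\psi_{h,n})\|_{L^2(\R)}\le Ch^{1/8-3\eta}$: the worst term is $h^{7/8-3/4}q_1\partial_\tau\widetilde\psi_{h,n}=h^{1/8}q_1\partial_\tau\widetilde\psi_{h,n}$ with $|q_1|\le C(|\sigma|^3+h^{1/8}\tau)$ on the support, where $|\sigma|\le h^{-\eta}$ forces $|q_1|\le Ch^{-3\eta}$; combined with the bound $\|\partial_\tau\widetilde\psi_{h,n}\|_2\le C$ coming from assertions (1)--(2) of Proposition~\ref{lem:psih} together with the $\partial_t$-control inherent in $q_h(\psi_{h,n})\le\mu_n(h)+\mathcal O(h^\infty)$, this yields $Ch^{1/8-3\eta}$, which dominates all the other remainder contributions since $\eta<\frac1{24}$ keeps the exponent positive. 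I would finish by noting that the replacement of $R_{0,h}^-$ by $R_0^-$ or of the various weights is controlled by \eqref{eq:R0h-R0} and $\rho=\frac18+\eta$, so all such swaps cost at most $Ch^{1/2-\rho}=Ch^{3/8-\eta}$, well within the claimed bound.
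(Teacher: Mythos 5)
Your proposal is correct and follows essentially the same route as the paper: part (1) by direct computation from the explicit formula for $u_{0,h}$ and $w_h$, and part (2) by integrating by parts in $\tau$ (using the Robin condition at $\tau=0$ and the Dirichlet condition at $\tau=h^{-\rho}$), applying the decomposition \eqref{eq:dec-Lh2}--\eqref{eq:dec-Lh1} to $u_{0,h}$, and bookkeeping the remainders, with the $q_1$ term producing the dominant $h^{\frac18-3\eta}$ error exactly as in the paper. One small slip worth noting: the weight $\widetilde a^{-1}$ in $\widetilde R_{0,h}^-$ actually arises from the tangential term, where $\widetilde a^{-1}\cdot\widetilde a=1$ lets one pull $\partial_\sigma^2$ outside $\int u_{0,h}\,\partial_\sigma\big(\widetilde a^{-1}\partial_\sigma\widetilde\psi_{h,n}\big)\,d\tau$ (the $\sigma^2\partial_\tau$ term instead comes with the weight $\widetilde a$ and must be swapped afterwards), but since you control all such weight swaps via \eqref{eq:R0h-R0} at cost $\mathcal O(h^{\frac38-\eta})$, this does not affect the final estimate.
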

\begin{proof}
The assertion in (1) follows from \eqref{eq:wh}, the definition of
$u_{0,h}$ in \eqref{u0h} and a straightforward computation.

The functions $u_{0,h}$ and $\widetilde\psi_{h,n}$ vanish at
$\tau=h^{-\rho}$ and they satisfy the following Robin condition at
$\tau=0$, $$\partial_\tau u_{0,h}=-u_{0,h}\quad{\rm and}\quad
\partial_\tau\widetilde\psi_{h,n}=-\psi_{h,n}\,.$$
Thus, an integration by parts gives, for a.e. $\sigma$,
$$\int_0^{h^{-\rho}}u_{0,h}(\tau)\,\widetilde
a^{-1}\partial_\tau\big(\widetilde
a\partial_\tau\,\widetilde\psi_{h,n}\big)\,\widetilde a\,d\tau=
\int_0^{h^{-\rho}}\widetilde a^{-1}\partial_\tau\big(\widetilde
a\partial_\tau u_{0,h}\big)\,\widetilde\psi_{h,n}\,\widetilde
a\,d\tau\,.$$
The integral in the left hand side can be transformed to a simpler
one by using the (point-wise) expression of the operator
$a^{-1}\partial_\tau\big(\widetilde a\partial_\tau\big)$ in
\eqref{eq:dec-Lh2}. This allows us to write
\begin{align}\label{eq:dec-t'}
&-\int_0^{h^{-\rho}}u_{0,h}(\tau)\,\widetilde
a^{-1}\partial_\tau\big(\widetilde
a\partial_\tau\,\widetilde\psi_{h,n}\big)\,\widetilde a\,d\tau\nonumber\\
&\qquad=\int_0^{h^{-\rho}}\Big\{\big(P_0+h^{1/2}P_2+h^{3/4}\frac{\kappa''(0)}{2!}\sigma^2\partial_\tau
+h^{7/8}q_1(\sigma,\tau)\partial_\tau\big)u_{0,h}\Big\}\,\widetilde \psi_{h,n}\,\widetilde a\,d\tau\,.
\end{align}
The definition of the operators $P_0$, $P_2$  in
 \eqref{defPj} and the
assertion in (1) give us
\begin{equation}\label{eq:eh0}
\big(P_0+h^{1/2}P_2+h^{3/4}\frac{\kappa''(0)}{2!}\sigma^2\partial_\tau\big)u_{0,h}=\big(-1-\kappa_{\max}h^{1/2}+h^{3/4}\frac{k_2}{2}\sigma^2\big)u_{0,h}+e_h\,,
\end{equation}
where $k_2=-\kappa''(0)>0$ and
\begin{equation}\label{eq:eh}
e_h(\tau)=\Big[1+h^{1/2}\kappa_{\max}+h^{3/4}\frac{k_2\sigma^2}2\Big]\epsilon_1(h)u_{0,h}(\tau)+\Big[h^{1/2}\kappa_{\max}+h^{3/4}\frac{k_2\sigma^2}2\Big]\epsilon_2(\tau;h)\,.
\end{equation}
Since the variable $\tau$ lives in $(0,h^{-\rho})$, $0<\rho<\frac12$
and $\kappa(\cdot)$ is bounded, we get that $\widetilde a=
1+\mathcal O(h^{\frac12-\rho})$. Now, the properties of the
expressions $\epsilon_1$ and $\epsilon_2$ in assertion (1) yield
\begin{equation}\label{eq:eh'}
\left|\int_0^{h^{-\rho}}e_n(\tau)\widetilde\psi_{h,n}(\sigma,\tau)\,\widetilde a d\tau\right|\leq \mathcal O(h^\infty)
\left(\int_0^{h^{-\rho}}|\widetilde\psi_{h,n}(\sigma,\tau)|^2\,\widetilde a d\tau\right)^{1/2}\,.
\end{equation}
Using the normalization of $\widetilde\psi_{h,n}$ in the $L^2$-norm
relative to the measure $\widetilde a\,d\sigma\,d\tau$, we get
further,
\begin{equation}\label{eq:eh''}
\int_\R\left|\int_0^{h^{-\rho}}e_n(\tau)\widetilde\psi_{h,n}(\sigma,\tau)\,\widetilde a d\tau\right|^2d\sigma\leq \mathcal O(h^\infty)
\int_\R\int_0^{h^{-\rho}}|\widetilde\psi_{h,n}(\sigma,\tau)|^2\,\widetilde a d\tau=\mathcal O(h^\infty)\,.
\end{equation}
We infer from \eqref{eq:dec-t'} and \eqref{eq:eh0} the following
identity
\begin{equation}\label{eq:dec-t}
-\int_0^{h^{-\rho}}u_{0,h}(\tau)\,\widetilde
a^{-1}\partial_\tau\big(\widetilde
a\partial_\tau\,\widetilde\psi_{h,n}\big)\,\widetilde a\,d\tau
=\big(-1-h^{1/2}\kappa_{\max}+h^{3/4}\frac{k_2}{2}\sigma^2\big)R_{0,h}^-\widetilde\psi_{h,n}+r_1(\widetilde \psi_{h,n})\,,
\end{equation}
where $R_{0,h}^-\widetilde\psi_{h,n}$ is defined as in
\eqref{eq:R0-h}, and the remainder term is a function of the
variable $\sigma$ and defined as follows:
\begin{equation}\label{eq:r1}
r_1(\widetilde \psi_{h,n})=\int_0^{h^{-\rho}}e_h(\tau)\widetilde\psi_{h,n}(\sigma,\tau)\,\widetilde a\,d\tau+
\int_0^{h^{-\rho}}\left(h^{7/8}q_1(\sigma,\tau)\partial_\tau u_{0,h}(\tau)\right)\widetilde\psi_{h,n}(\sigma,\tau)\,\widetilde a\,d\tau\,.
\end{equation}
In the support of $\widetilde\psi_{h,n}$, $|\sigma|\leq h^{-\eta}$
and $0\leq \tau\leq h^{-\rho}$. The choice of $\rho$ and $\eta$ is
such that $\rho=\frac18+\eta$ and $0<\eta<\frac18$. Thanks to
\eqref{eq:qi}, $|q_1(\sigma,\tau)|\leq
C(|\sigma|^3+h^{1/8}\tau)=\mathcal O(h^{-3\eta})$ in the support of
$\widetilde\psi_{h,n}$. By the Cauchy-Schwarz inequality and the
assertion in (1), we may write
$$\left|\int_0^{h^{-\rho}}\left(h^{7/8}q_1(\sigma,\tau)\partial_\tau u_{0,h}(\tau)\right)\widetilde\psi_{h,n}(\sigma,\tau)\,\widetilde
a\,d\tau\right|\leq
Ch^{{\frac78-3\eta}}\left(\int_{0}^{h^{-\rho}}|\widetilde\psi_{h,n}(\sigma,\tau)|^2\,\widetilde
a^2\,d\tau\right)^{1/2}\,.$$ The normalization of $\widetilde\psi_{h,n}$ and the estimate $\widetilde a=1+\mathcal O(h^{\frac12-\rho})$  yield
$$
\int_\R\left|\int_0^{h^{-\rho}}\left(h^{7/8}q_1(\sigma,\tau)\partial_\tau
u_{0,h}(\tau)\right)\widetilde\psi_{h,n}(\sigma,\tau)\,\widetilde
a\,d\tau\right|^2\,d\sigma=\mathcal O(h^{\frac78-3\eta})\,.$$
Inserting this estimate and the one in \eqref{eq:eh''} into
\eqref{eq:r1}, we get that the $L^2$-norm in $\R_\sigma$ of the
remainder term is estimated as follows:
\begin{equation}\label{eq:r1'}
\|r_1(\widetilde \psi_{h,n})\|_{L^2(\R)}\leq
C\,h^{\frac78-3\eta}\,.\end{equation}
Next we look at the term
\begin{multline}\label{eq:sterm}
\int_0^{h^{-\rho}}u_{0,h}(\tau)\,\widetilde
a^{-1}\partial_\sigma\big(\widetilde
a^{-1}\partial_\sigma\widetilde\psi_{h,n}\big)\,\widetilde a\,d\tau
=\int_0^{h^{-\rho}}u_{0,h}(\tau)\,\widetilde
a^{-1}\partial_\sigma^2\widetilde\psi_{h,n}\,d\tau\\
+\int_0^{h^{-\rho}}u_{0,h}(\tau)\,\big(\widetilde
a^{-1}\partial_\sigma\widetilde
a^{-1}\big)\partial_\sigma\widetilde\psi_{h,n}\,d\tau\,.
\end{multline}
Here we will need the operator $\widetilde R^-_{0,h}$ in
\eqref{eq:R0-h'}. Since the function $u_{0,h}$ is independent of the
variable $\sigma$, 
$$\int_0^{h^{-\rho}}u_{0,h}(\tau)\,\partial_\sigma^2\big(\widetilde
a^{-1}\widetilde\psi_{h,n}\big)\,d\tau=\partial_\sigma^2\widetilde R^-_{0,h}\widetilde\psi_{h,n}\,.$$
Using the simple identity
$$\widetilde
a^{-1}\partial_\sigma^2\widetilde\psi_{h,n}=\partial_\sigma^2\big(\widetilde
a^{-1}\widetilde \psi_{h,n}\big)- 2\partial_\sigma\widetilde
a^{-1}\partial_\sigma\widetilde\psi_{h,n}-\big(\partial_\sigma^2\widetilde
a^{-1}\big)\widetilde\psi_{h,n}\,,$$  we infer from \eqref{eq:sterm},
\begin{equation}\label{eq:dec-s}
-h^{3/4}\int_0^{h^{-\rho}}u_{0,h}\,\widetilde
a^{-1}\partial_\sigma\big(\widetilde
a^{-1}\partial_\sigma\widetilde\psi_{h,n}\big)\,\widetilde a\,d\tau
=-h^{3/4}\partial_\sigma^2\widetilde
R_{0,h}^-\widetilde\psi_{h,n}+h^{3/4} r_2(\widetilde\psi_{h,n})\,,
\end{equation}
where the remainder term is a function of the variable $\sigma$ and
defined as follows:
\begin{equation}\label{eq:r2}
r_2(\widetilde\psi_{h,n})=\int_0^{h^{-\rho}}u_{0,h}\Big[2\big(\partial_\sigma\widetilde
a^{-1}\big)\times\big(\partial_\sigma\widetilde\psi_{h,n}\big)+\big(\partial_\sigma^2\widetilde
a^{-1})\widetilde\psi_{h,n}\Big]\, d\tau\,.\end{equation} The
definition of $\widetilde a$ and the boundedness of the curvature
yield that, in the support of $\psi_{h,n}$,
$$|\partial_\sigma\widetilde
a^{-1}|=\mathcal O\big(h^{\frac12-\rho}h^{\frac18}\big)\quad{\rm
and}\quad |\partial_\sigma^2\widetilde a^{-1}|=\mathcal
O\big(h^{\frac12-\rho}h^{\frac14}\big)\,.$$ Applying the
Cauchy-Schwarz ineqaulity, using the normalization of $u_{0,h}$ and
the expansion $\widetilde a=1+\mathcal O(h^{\frac12-\rho})$, we get
$$
\|r_2(\widetilde\psi_{h,n})\|_{L^2(\R)}\leq
Ch^{\frac12-\rho}h^{\frac18}\|\partial_\sigma\widetilde\psi_{h,n}\|_2+Ch^{\frac12-\rho}h^{1/4}\|\widetilde\psi_{h,n}\|_2\,.
$$
Performing the change of variables
$(s,t)=(h^{1/8}\sigma,h^{1/2}\tau)$, we get (see \eqref{eq:psih}),
$$\|\partial_\sigma\widetilde\psi_{h,n}\|_2=h^{\frac18}\|\partial_s\psi_{h,n}\|_2
\quad{\rm and}\quad \|\widetilde\psi_{h,n}\|_2=\|\psi_{h,n}\|_2=1+o(1)\,.
$$
Thus, the remainder term satisfies
$$\|r_2(\widetilde\psi_{h,n})\|_{L^2(\R)}\leq
Ch^{\frac12-\rho}h^{1/4}\|\partial_s\psi_{h,n}\|_2+Ch^{\frac12-\rho}h^{1/4}\,.$$
We estimate the norm of $\partial_s\psi_{h,n}$ as in
Remark~\ref{rem:partial-s}. In that way we get
\begin{equation}\label{eq:r2'}
\|r_2(\widetilde\psi_{h,n})\|_{L^2(\R)}\leq \widehat C
h^{\frac58-\rho}\,.\end{equation} Also, by \eqref{eq:R0h-R0} and the definition of $\rho=\frac18+\eta$, we
have
\begin{equation}\label{eq:r2''}\big\|\big(R_{0,h}^{-}-\widetilde
R_{0,h}^-\big)\widetilde\psi_{h,n}\big\|_2\leq
Ch^{\frac12-\rho}=Ch^{\frac38-\eta}\,.\end{equation} We set
$r(\widetilde\psi_{h,n})=h^{-3/4}r_1(\widetilde\psi_{h,n})+r_2(\widetilde\psi_{h,n})+\frac{k_2}2\sigma^2\big(R_{0,h}^{-}-\widetilde
R_{0,h}^-\big)\widetilde\psi_{h,n}$. Thanks to \eqref{eq:r1'},
\eqref{eq:r2'}, \eqref{eq:r2''}, and the condition $0<\eta<\frac18$,
we have
$$\|r(\widetilde\psi_{h,n})\|_{L^2(\R)}=\mathcal
O(h^{\frac18-3\eta})\,.$$ On the other hand, we insert
\eqref{eq:dec-s} into \eqref{eq:sterm} then  collect the obtained
identity and \eqref{eq:dec-t}. By virtue of the definition of
$\widetilde{\mathcal L}_h$ in \eqref{eq:newLh}, we get  the identity
in Assertion (2).
\end{proof}

\subsubsection{Validating  the approximation}

In the next lemma, we show that the functions $\widetilde
R_{0,h}^-\widetilde\psi_{h,n}$ yield approximate eigenfunctions for
the harmonic oscillator
$$H_{\rm harm}=-\frac{d^2}{d\sigma^2}+\frac{k_2}2\sigma^2\,.$$

\begin{lem}\label{lem:Lh=mu}
Let $n\in\mathbb N^*$,
$\Lambda_n=\big(\mu_n(h)+h+h^{3/2}\kappa_{\max}\big)h^{-7/4}$ and
$0<\eta<\frac1{24}$. There holds,
$$\big\|\big(H_{\rm
harm}-\Lambda_n\big)\widetilde R_{0,h}^-\widetilde\psi_{h,n}\big\|_2\leq C\left(h^{\frac18-3\eta}+h^{\frac38-\eta}|\Lambda_n|\right)\big\|\widetilde R_{0,h}^-\widetilde\psi_{h,n}\big\|_2\,.$$
\end{lem}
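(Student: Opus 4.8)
The plan is to read the eigenvalue equation for $v_{h,n}$ in the rescaled variables $(\sigma,\tau)=(h^{-1/8}s,h^{-1/2}t)$, project it by $R_{0,h}^-$, and compare the outcome with the structural identity already established in Lemma~\ref{lem:id}(2); the harmonic oscillator should then emerge after dividing by $h^{3/4}$ and recognizing the constant $\Lambda_n$.

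First I would record the rescaled eigenvalue equation. By \eqref{eq:scalingLh} and \eqref{eq:newLh}, in the variables $(\sigma,\tau)$ the operator $\mathcal L_h$ acts as $h\,\widetilde{\mathcal L}_h$, and the prefactor $h^{5/16}$ in \eqref{eq:psih} is chosen precisely so that the rescaling Jacobian $h^{5/8}$ cancels and $\|\widetilde\psi_{h,n}\|_{L^2(\widetilde a\,d\sigma d\tau)}=\|\psi_{h,n}\|_{L^2(\Omega)}$. Translating Proposition~\ref{lem:psih}(1) into this picture then gives
\[
\big\|\widetilde{\mathcal L}_h\widetilde\psi_{h,n}-h^{-1}\mu_n(h)\widetilde\psi_{h,n}\big\|_{L^2(\widetilde a\,d\sigma d\tau)}=\mathcal O(h^\infty)\,,
\]
the cut-off errors from \eqref{eq:psih0} being $\mathcal O(h^\infty)$ by the exponential localization of Theorems~\ref{thm:dec} and \ref{thm:dec-h}. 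Next I would project by $R_{0,h}^-$: since $\int_0^{h^{-\rho}}|u_{0,h}|^2\widetilde a\,d\tau=1+\mathcal O(h^{1/2-\rho})$, Cauchy--Schwarz shows $R_{0,h}^-$ is bounded from $L^2(\widetilde a\,d\sigma d\tau)$ to $L^2(\R_\sigma)$ with norm $1+o(1)$, whence $R_{0,h}^-\widetilde{\mathcal L}_h\widetilde\psi_{h,n}=h^{-1}\mu_n(h)R_{0,h}^-\widetilde\psi_{h,n}+\mathcal O(h^\infty)$. Matching this with Lemma~\ref{lem:id}(2) and isolating the $H_{\rm harm}$ term yields
\[
h^{3/4}H_{\rm harm}\widetilde R_{0,h}^-\widetilde\psi_{h,n}=\big(h^{-1}\mu_n(h)+1+h^{1/2}\kappa_{\max}\big)R_{0,h}^-\widetilde\psi_{h,n}-h^{3/4}r(\widetilde\psi_{h,n})+\mathcal O(h^\infty)\,.
\]

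To conclude I would use that $h^{-1}\mu_n(h)+1+h^{1/2}\kappa_{\max}=h^{3/4}\Lambda_n$ by the definition of $\Lambda_n$; dividing by $h^{3/4}$ and inserting $\Lambda_n R_{0,h}^-\widetilde\psi_{h,n}=\Lambda_n\widetilde R_{0,h}^-\widetilde\psi_{h,n}+\Lambda_n(R_{0,h}^--\widetilde R_{0,h}^-)\widetilde\psi_{h,n}$ gives
\[
\big(H_{\rm harm}-\Lambda_n\big)\widetilde R_{0,h}^-\widetilde\psi_{h,n}=\Lambda_n\big(R_{0,h}^--\widetilde R_{0,h}^-\big)\widetilde\psi_{h,n}-r(\widetilde\psi_{h,n})+\mathcal O(h^\infty)\,.
\]
Then I would bound the right-hand side using $\|r(\widetilde\psi_{h,n})\|_{L^2(\R)}=\mathcal O(h^{1/8-3\eta})$ from Lemma~\ref{lem:id}(2) and $\|(R_{0,h}^--\widetilde R_{0,h}^-)\widetilde\psi_{h,n}\|_2\leq Ch^{1/2-\rho}=Ch^{3/8-\eta}$ (the bound \eqref{eq:r2''}, coming from \eqref{atilde}, \eqref{atilde-1} and \eqref{estabc}), and finally divide by $\|\widetilde R_{0,h}^-\widetilde\psi_{h,n}\|_2=1+o(1)$ from Corollary~\ref{corol:psih}(1), which is exactly the claimed inequality.

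The analytic heart of the matter, namely the identity of Lemma~\ref{lem:id}(2) together with its remainder estimate, is already available, so I expect the main difficulty here to be purely organizational: carrying the Jacobian $h^{5/8}$ and the weight $\widetilde a$ through the rescaling, verifying that $R_{0,h}^-$ is uniformly bounded, and --- most delicately --- checking that the commutators with the cut-offs in \eqref{eq:psih0} are genuinely $\mathcal O(h^\infty)$ rather than merely small, which is precisely what the Agmon-type decay of Theorems~\ref{thm:dec}--\ref{thm:dec-h} provides. The restriction $0<\eta<\tfrac1{24}$ will be used only to make both error exponents $\tfrac18-3\eta$ and $\tfrac38-\eta$ positive, which is what renders these approximate eigenfunctions useful in the sequel.
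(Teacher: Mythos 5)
Your proposal is correct and follows essentially the same route as the paper: rescale the eigenvalue equation via Proposition~\ref{lem:psih}(1), project with the bounded operator $R_{0,h}^-$, invoke the identity and remainder bound of Lemma~\ref{lem:id}(2), absorb the discrepancy $(R_{0,h}^--\widetilde R_{0,h}^-)\widetilde\psi_{h,n}$ using \eqref{eq:R0h-R0}, and normalize with Corollary~\ref{corol:psih}(1). The algebra with $\Lambda_n$ and the two error exponents $h^{\frac18-3\eta}$ and $h^{\frac38-\eta}$ matches the paper's computation exactly.
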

Thanks to Remark~\ref{rem:mu1} and Theorem~\ref{thm:st}, we have
$\Lambda_n=\mathcal O(1)$, as $h\to0_+$.

\begin{proof}[Proof of Lemma~\ref{lem:Lh=mu}]
We perform the change of variables
$(s,t)=(h^{1/8}\sigma,h^{1/2}\tau)$ and use \eqref{eq:psih} to write
(also we use \eqref{eq:scalingLh} to express the operator $\mathcal
L_h$ in the boundary coordinates),
$$
\widetilde{\mathcal L}_h\widetilde\psi_{h,n} (\sigma,\tau) =h^{5/16}h^{-1}\mathcal
L_h\psi_{h,n} (s,t)\quad{\rm and}\quad \widetilde\psi_{h,n}(\sigma,\tau)=h^{5/16}\psi_{h,n}(s,t) \,.
$$
By Lemma~\ref{lem:psih}, we have
$$
\|\widetilde{\mathcal
L}_h\widetilde\psi_{h,n}-h^{-1}\mu_n(h)\widetilde\psi_{h,n}\|_2 =
\mathcal O(h^\infty)\,.
$$
Since the norm of the operator  $R_{0,h}^-$ in \eqref{eq:R0-h} is
equal to $1$, we have
$$\big\|R_{0,h}^-\big(\widetilde{\mathcal
L}_h\widetilde\psi_{h,n}-h^{-1}\mu_n(h)\widetilde\psi_{h,n}\big)\big\|_2\leq
\|\widetilde{\mathcal
L}_h\widetilde\psi_{h,n}-h^{-1}\mu_n(h)\widetilde\psi_{h,n}\|_2=
\mathcal O(h^\infty)\,. $$ Now, Lemma~\ref{lem:id} and
\eqref{eq:R0h-R0} tell us that
\begin{align}\label{eq:dec-Lh''}
\left\|H_{\rm harm}\widetilde R_{0,h}^-\widetilde\psi_{h,n}-\Lambda_n \widetilde R_{0,h}^-\widetilde\psi_{h,n}\right\|_{L^2(\R)}
&\leq Ch^{\frac18-3\eta}+C|\Lambda_n|\,\|R_{0,h}^-\widetilde\psi_{h,n}-\widetilde R_{0,h}^-\widetilde\psi_{h,n}\|_{L^2(\R)}\\
&\leq Ch^{\frac18-3\eta}+C|\Lambda_n|h^{\frac12-\rho}=Ch^{\frac18-3\eta}+C|\Lambda_n|h^{\frac38-\eta}\,.
\end{align}
Corollary~\ref{corol:psih} tells us that the norm of $\widetilde
R_0^-\psi_{h,n}$ is
 $1+ o(1)$. In that way, \eqref{eq:dec-Lh''}  finishes the proof of
Lemma~\ref{lem:Lh=mu}.
\end{proof}

\begin{proof}[Lower bounds and three-term asymptotics] ~\\
Let $ n\in\mathbb N^*$ be a fixed natural number. Let $W_n$ be the
space spanned by the functions $\{R_0^-\widetilde\psi_{h,j}\}_{1\leq
j\leq n}$. These functions are almost orthonormal, see
Lemma~\ref{lem:psih}. Thanks to Lemma~\ref{lem:Lh=mu} and the
assumption $0<\eta<\frac18$, we see that
$$\forall\phi\in W_n \,,\quad \langle H_{\rm Harm}\phi,\phi\rangle_{L^2(\R)}\leq
\left\{h^{-7/4}\big(\mu_n(h)+h+h^{3/2}\kappa_{\rm max}\big)(1+o(1))+o(1)\right\}\|\phi\|_2\,,$$
and the dimension of $W_n$ is equal to $n$. \\
By the min-max principle, we deduce that the $n$-th eigenvalue of
the harmonic oscillator $H_{\rm harm}$ satisfies:
$$(2n-1)\sqrt{\frac{k_2}2}\leq h^{-7/4}\big(\mu_n(h)+h+h^{3/2}\kappa_{\rm
max}\big)(1+o(1))+o(1)\,.$$ This gives the following lower bound of $\mu_n(h)$,
$$\mu_n(h)\geq -h-h^{3/2}\kappa_{\rm
max}+(2n-1)\sqrt{\frac{k_2}2}\,h^{7/4}+h^{7/4}o(1)\,.$$ Now, thanks to the upper bound in Theorem~\ref{thm:st}, we get
\begin{equation}\label{eq:asym-mu}
\mu_n(h)= -h-h^{3/2}\kappa_{\rm
max}+(2n-1)\sqrt{\frac{k_2}2}\,h^{7/4}+h^{7/4}o(1)\,.
\end{equation}
\end{proof}

\begin{proof}[Proof of Theorem~\ref{thm:HK'}] ~\\
Let $n\in\mathbb N$ and $M\in\mathbb N\cup\{0\}$. Thanks to
Theorem~\ref{thm:ub}, we know that there exists an eigenvalue
$\widetilde\mu_n(h)$ of the operator $\mathcal L_h$ satisfying  as
$h\to0_+$,
$$\widetilde\mu_n(h)=-h-h^{3/2}\kappa_{\rm
max}+(2n-1)\sqrt{\frac{k_2}2}\,h^{7/4}+h^{7/4}\sum_{j=0}^M\zeta_{j,n}h^{j/8}+\mathcal
O\big(h^{\frac78+\frac{M+1}8}\big)\,.$$ Thanks to \eqref{eq:asym-mu}, we observe that there exists $h_0>0$ such that, for all $h\in(0,h_0)$,
$$\widetilde\mu_n(h)=\mu_n(h)\,.$$
Thus, $\mu_n(h)$ satisfies the asymptotic expansion in
Theorem~\ref{thm:HK'}.
\end{proof}

\section{WKB constructions}\label{sec:wkb}

In this section, we shortly address the WKB construction in the
spirit of Bonnaillie-H\'erau-Raymond \cite{BHR} (see previously in the context of the Born-Oppenheimer approximation \cite{Lef}).

We start by recalling the expression in \eqref{eq:newLh} of the
operator $L_h$ in the boundary coordinates $(s,t)$. We will express
this operator in the re-scaled coordinates $(s,\tau)=(s,h^{-1/2}t)$.
{ For this purpose it is convenient} to introduce the operator
\begin{equation}\label{eq:hLh}
\widehat L_h=-\partial_\tau^2-h\,\hat a^{-2}\partial_s^2-\hat a^{-1}(\partial_\tau\hat a)\partial_\tau-h\,\hat a^{-1}
(\partial_s\hat a^{-1})\partial_s\,,
\end{equation}
where
$$\hat
a(s,\tau)=1-h^{1/2}\tau\kappa(s)\,,\quad
\partial_\tau\hat a(s,\tau)=-h^{1/2}\kappa(s)\,,\quad \partial_s\hat a^{-1}(s,\tau)=h^{1/2}\hat a^{-2}(s,\tau)\kappa'(s)\,.$$
In the $(s,\tau)$ coordinates, the operator $\mathcal L_h$ is
expressed as follows,
$$\mathcal L_h=h\,\widehat L_h\,.$$
We will abuse notation and identify $\hat a^{-1}$, $\hat a^{-2}$ and
$\hat a^{-3}$ with the series expansions in $\tau$,
\begin{multline*}
\hat a^{-1}(s,\tau)=\sum_{j=0}^\infty
h^{j/2}\tau^j(\kappa(s))^j\,,\quad \hat
a^{-2}(s,\tau)=\sum_{j=0}^\infty c_j h^{j/2}\tau^j(\kappa(s))^j\,,\\
\hat a^{-3}(s,\tau)=\sum_{j=0}^\infty d_j
h^{j/2}\tau^j(\kappa(s))^j\,,
\end{multline*}
where, for all $j\in\{0,1,\cdots\}$,
$$c_0=d_0=1\,,\quad c_j=\frac{1}{j!}\prod_{k=0}^{j-1}(k-2)\,,\quad
d_j=\frac{1}{j!}\prod_{k=0}^{j-1}(k-3)\quad (j\geq 1)\,.$$
This leads to the following expansion of the operator $\widehat
L_h$,
\begin{align*}
\widehat L_h&=-\partial_\tau^2-h\partial_s^2\\
&~~+2h^{3/2}\tau\kappa(s)\partial_s^2+h^{1/2}\partial_\tau+h^{3/2}\tau\kappa'(s)\partial_s\\
&~~-\sum_{j=1}^\infty c_j h^{\frac{j+2}2}\tau^j(\kappa(s))^j\partial_s^2+\sum_{j=1}^\infty h^{\frac{j+1}2}\tau^j(\kappa(s))^{j+1}\partial_\tau
-\kappa'(s)\sum_{j=1}^\infty h^{\frac{j+3}2}d_j\tau^j(\kappa(s))^j\partial_s\,.
\end{align*}
We use the idea {\it \`a la} Born-Oppenheimer \cite{BOp,Lef, Mar}
and look for a trial state $ \Psi^{WKB} (s,\tau,h)$ having the WKB
expansion\footnote{More correctly, this means that  $$
   \exp\left( \frac{\vartheta (s)}{h^\frac 14}\right)  \Psi^{WKB} (s,\tau,h) \sim \sum_{\ell=0}^\infty a_\ell (s,\tau) h^\frac{\ell}{4}\,.
 $$}
 $$
 \Psi^{WKB} (s,\tau,h) \sim  \exp\left( - \frac{\vartheta (s)}{h^\frac 14}\right) \left( \sum_{\ell=0}^\infty a_\ell (s,\tau) h^\frac{\ell}{4}\right)
 $$
 and a corresponding eigenvalue
 $$\mu^{WKB} \sim  \sum_{\ell=0}^\infty \mu_\ell h^\frac \ell 4\,,$$
 where $\vartheta(s)$ is a real-valued function to be determined. \\

 As
 we shall see, the analysis below will allow us to select
 $\mu_0,\mu_1,\cdots$ so that $\mu^{WKB}$ is an expansion of the
 ground state energy of the operator $\mathcal L_h$ (this in particularly means that we impose the Robin condition). The focus on the ground state energy is for the sake of simplicity. A slight modification of the method
 allows us to get  expansions of all the low-lying
 eigenvalues of the operator $\mathcal L_h$.

We will arrange the terms in the equation
\begin{equation}\label{eq:WKB}
\widehat L_h\Psi^{WKB}\sim\mu^{WKB}\, \Psi^{WKB}\,,\end{equation} in
the form of    power series in $h^{1/4}$ and select $a_\ell
(s,\tau)$ and $\mu_\ell$ by matching the terms with coefficient
 $h^{\ell/4}$.

We introduce the (formal) operator
$$\widehat L_h^\vartheta:=\exp \left( \frac{\vartheta (s)}{h^\frac 14}\right)  \,\widehat L_h\, \exp \left(- \frac{\vartheta (s)}{h^\frac 14}\right) \,,$$
and notice that, \eqref{eq:WKB} is (formally) equivalent to
\begin{equation}\label{eq:WKB'}
\widehat
L_h^\vartheta\left( \sum_\ell a_\ell (s,\tau) h^\frac{\ell}{4}\right)\sim\mu^{WKB}\left( \sum_\ell a_\ell (s,\tau) h^\frac{\ell}{4}\right)\,.\end{equation}
The next step is to (formally) expand the operator $\widehat
L_h^\vartheta$ as follows:
\begin{equation}\label{eq:wkbL}
 \widehat L_h^\vartheta = \sum_{\ell=0}^\infty Q_\ell^\vartheta \,h^\frac \ell 4\,.
\end{equation}
In order to get the expressions of the operators $Q_\ell^\vartheta$,
we do a straightforward (but long!) calculation and observe that
\begin{align*}
\widehat L_h^\vartheta =&-\partial_\tau^2+h^{\frac12}\big(\partial_\tau-\vartheta(s)^2\big)+h^{\frac34}\big(2\vartheta'(s)\partial_s+\vartheta''(s)\big)\\
&+h^{\frac44}\big(-\partial_s^2+c_3\tau^3\kappa(s)^3+\tau^3(\kappa(s))^5\big)\\
&+h^{\frac54}\big(2c_3\tau^3\kappa(s)^3\vartheta'(s)\partial_s-c_3\vartheta''(s)\kappa(s)^3-\tau\kappa'(s)\vartheta'(s)\big)\\
&+h^{\frac64}\big(-c_5\tau^5\kappa(s)^5\partial_s^2+c_4\tau^4\vartheta'(s)\kappa(s)^4+\tau^4\kappa(s)^5\big)\\
&+\sum_{j=3}^\infty h^{\frac{2j+1}4}\tau^{j+1}\kappa(s)^{j+1}\big(2c_{j+1}\vartheta'(s)\partial_s-c_{j+1}\vartheta''(s)+d_{j+2}\tau\vartheta'(s)\kappa'(s)\kappa(s)\big)\\
&+\sum_{j=4}^\infty h^{\frac{2j}4}\tau^{j+1}\kappa(s)^{j+1}\big(-c_{j+2}\tau\kappa(s)\partial_s^2-d_{j+3}\tau^2\kappa'(s)\kappa(s)^2\partial_s-c_{j+1}\vartheta'(s)\big)\\
&+\sum_{j=4}^\infty h^{\frac{2j}4}\tau^{j+1}\kappa(s)^{j+2}\partial_\tau\,.
\end{align*}
In that way, we find the following expressions of the operators in
\eqref{eq:wkbL},
\begin{align*}
 &Q_0^\vartheta= -\partial_\tau^2\,,\\
 &Q_1^\vartheta =0\,,\\
 &Q_2^\vartheta  =  \kappa(s) \partial_\tau  - \vartheta'(s)^2 \,,\\
 &Q_3^\vartheta = 2 \vartheta'(s)  \partial_s + \vartheta'' (s)\,,\\
 &Q_4^\vartheta = -\partial_s^2+c_3\tau^3\kappa(s)^3+\tau^3(\kappa(s))^4\partial_\tau\,,\\
 &Q_5^\vartheta=2c_3\tau^3\kappa(s)^3\vartheta'(s)\partial_s-c_3\vartheta''(s)\kappa(s)^3-\tau\kappa'(s)\vartheta'(s)\,,\\
 &Q_6^\vartheta=-c_5\tau^5\kappa(s)^5\partial_s^2+c_4\tau^4\vartheta'(s)\kappa(s)^4+\tau^4(\kappa(s))^5\partial_\tau\,,
\end{align*}
and for all $\ell>6$, we have when $\ell$ is even,
$$
Q_\ell^\vartheta=\tau^{\frac\ell2+1}\kappa(s)^{\frac\ell2+1}\big(-c_{\frac\ell2+2}\tau\kappa(s)\partial_s^2-d_{\frac\ell2+3}\tau^2\kappa'(s)\kappa(s)^2\partial_s
-c_{\frac\ell2+1}\vartheta'(s)+\kappa(s)\partial_\tau\big)\,,
$$
and when $\ell$ is odd,
$$
Q_\ell^\vartheta=\tau^{\frac{\ell-1}4+1}\kappa(s)^{\frac{\ell-1}4+1}\big(2c_{\frac{\ell-1}4+1}\vartheta'(s)\partial_s-c_{\frac{\ell-1}4+1}\vartheta''(s)
+d_{\frac{\ell-1}4+2}\tau\vartheta'(s)\kappa'(s)\kappa(s)\big)\,.$$
We return to the formal equation in \eqref{eq:wkbL}. The term with
coefficient $h^0$ yields the equation,
 $$
 (Q_0^\vartheta-\mu_0) a_0(s,\tau) =0\,.
 $$
The operator $Q_0^\vartheta$ is nothing else than the operator
$I\otimes \mathcal H_{00}$ on $L^2(\mathbb R_s\times \mathbb
R_{+,\tau})$. Hence we will choose $a_0$ and later the $a_j$ in the
domain of $Q_0^\vartheta$ in order to respect the Robin condition.

 This leads us naturally (considering the operator $\mathcal H_{0,0}$ introduced in \eqref{defH00})  to the choice
 $$
 \mu_0=-1\quad{\rm and}\quad a_0 (s,\tau) = \xi_0(s) u_0(\tau)\,.
 $$
 Since
$Q_1^\vartheta=0$, the term in \eqref{eq:wkbL} with coefficient
$h^{1/4}$ yields
 $$
 (Q_0^\vartheta -\mu_0) a_1(s,\tau)-\mu_1a_1(s,\tau) =0\,.
 $$
 This leads us to the natural choice $\mu_1=0$ and
 $$
 a_1(s,\tau)=\xi_1(s) u_0(\tau)\,.
 $$
We look at the term with coefficient $h^{1/2}$ to obtain
$$
 (Q_0^\vartheta-\mu_0) a_2 + (Q_2^\vartheta -\mu_2) a_0 =0\,.
 $$
 Remembering that $\mu_0=-1$ and $a_0(s,\tau)=\xi_0(s)u_0(\tau)$, we
 get
$$
 (Q_0^\vartheta+1) a_2 + u_0(\tau) (\kappa(s)- \vartheta'(s)^2-\mu_2) \xi_0(s) =0\,.
 $$
 Multiplying by $u_0$ and integrating over $\tau$  leads us to the eikonal equation
 \begin{equation}\label{eq:eik}
 - \kappa(s) - \vartheta'(s)^2 -\mu_2 =0\,.
 \end{equation}
 Consequently, we  take $\mu_2=-\kappa(0)$, impose the condition $\vartheta'(0)=0$  and determine $\vartheta$
 from \eqref{eq:eik}. In particular, we obtain by differentiating
 \eqref{eq:eik} two successive times,
 \begin{equation}\label{eq:v''}
\vartheta''(0)=\sqrt{-\frac12\kappa''(0)}\,.
 \end{equation}
We  take $a_2$ in the form
 $$
 a_2(s,\tau)=\xi_2(s) u_0(\tau)\,,
 $$
so that the term $(Q_0^\vartheta+1) a_2$ vanishes.

Now we look at the term with coefficient $h^{3/4}$ in
\eqref{eq:wkbL}. This yields
 $$
 (Q_0^\vartheta+1) a_3 + (Q_2^\vartheta-\mu_2) a_1 + (Q_3^\vartheta -\mu_3)a_0 =0\,.
 $$
Using \eqref{eq:eik}, we see that the term $(Q_2^\vartheta-\mu_2)
a_1$ vanishes.  Choosing
 $$
 a_3(s,\tau) = \xi_3(s) u_0(\tau)\,.
 $$
 forces the term $(Q_0^\vartheta+1) a_3$ to vanish as well. We end up with the equation
$$(Q_3^\vartheta -\mu_3)a_0 =0\,.$$
 Multiplying by $u_0$ and integrating over
$\tau$ gives us the first transport equation
 \begin{equation}\label{eq:tr}
  2 \vartheta'(s) \xi_0'(s) + (\vartheta'' (s) - \mu_3) \xi_0(s)=0\,.
 \end{equation}
 In order to respect the condition $\vartheta'(0)=0$, \eqref{eq:tr} leads us to choose
 $$\mu_3 = \vartheta''(0)=  \sqrt{ - \frac 12
 \kappa''(0)}\,.$$ We can determine $\xi_0(s)\not=0$  by solving
 \eqref{eq:tr} in a neighborhood of $s=0$ and find:
 $$
 \xi_0(s) = c_0 \exp\left( - \frac{\vartheta'' (s)-\vartheta'' (0)}{2 \vartheta'(s)}\right)\,,
 $$
 where $c_0\neq 0$ is a constant.

\subsubsection*{The iteration process} ~\\
In \eqref{eq:wkbL}, the term with coefficient $h^{\ell/4}$,
$\ell\geq 4$, leads us to the equation,
\begin{equation}\label{eq:gterm}
\sum_{k=0}^\ell\big(Q_k^\vartheta-\mu_k\big)a_{\ell-k}=0\,.
\end{equation}
We need to define $a_\ell$ and $\mu_\ell$ for all $\ell\geq 4$. We
will do this by iteration.

Let us start by examining the case $\ell=4$. Thanks to the
conditions in \eqref{eq:eik}, \eqref{eq:gterm} becomes
\begin{equation}\label{eq:l=4}
(Q_0^\vartheta+1)a_4+(Q_3^\vartheta-\mu_3)a_1+(Q_4^\vartheta-\mu_4)a_0=0\,.\end{equation}
Multiplying by $u_0(\tau)$ and integrating over $\tau$ leads us to
the equation,
\begin{equation}\label{eq:Pxi1}
2\vartheta'(s)\xi_1'(s)+(\vartheta''(s)-\mu_3)\xi_1(s)+F_1(s)-\mu_4 \xi_0(s)=0\,,\end{equation}
where
$$F_1(s)=\int_0^\infty u_0(\tau)\,Q_4^\vartheta
a_0(s,\tau)\,d\tau\,.$$
We select $\mu_4$ such that
\begin{equation}\label{eq:conF1}
F_1(0)-\mu_4\xi_0(0) =0\,, \end{equation} and impose the condition
$$\xi_1(0)=0\,.$$
Note that this choice is consistent with \eqref{eq:Pxi1}, thanks to
$\mu_3=\vartheta''(0)$ and $\vartheta'(0)=0$.

The assumption on the  curvature is that it attains its
maximal value at the unique point $s=0$. Notice that, by
\eqref{eq:eik}, for $s\in (-\frac{|\partial
\Omega|}{2},\frac{|\partial \Omega|}{2} )$, $\vartheta'$ does not
vanish except at $0$ and that
$$2\vartheta'(s)\sim\sqrt{2k_2}\,s\quad{\rm and}\quad
\vartheta''(s)\sim\sqrt{\frac {k_2}2}\,,$$ where
$k_2=-\kappa''(0)>0$.

Dividing \eqref{eq:Pxi1} by $2 \vartheta'$, we get a first order
ordinary equation with $C^\infty$ coefficients in $(-\frac{|\partial
\Omega|}{2},\frac{|\partial \Omega|}{2} )$  which admits a unique
explicit solution $\xi_1(s)$ (by the method of the variation of
constants), if we impose the condition $\xi_1(0)=0\,$.

 With
this condition the function
$v_4=(Q_3^\vartheta-\mu_3)a_1-(Q_4^\vartheta-\mu_4)a_0$ is, for any
$s$, orthogonal to the function $u_0$ in $L^2(\R_+)$.

We return to \eqref{eq:l=4} and define $a_4(s,\tau)$ by
$$a_4(s,\tau)=\xi_4(s)u_0(\tau)+(Q_0^\vartheta+1)^{-1}v_4(s,\tau)\,,$$
where  $\xi_4(s)$ will be selected later  in the definition of
$\mu_7$ and $a_7\,$.

 Clearly, $a_4(s,\tau)$ is of the form,
$$a_4(s,\tau)=\xi_4(s)u_0(\tau)+\sum_{k=1}^{n_4}\xi_{4,k}(s)u_{4,k}(\tau)\,.$$

Now we describe the iteration process. Let $\ell\geq 5$ and
$\xi_{k}(s)$, $k\in\{4,\cdots,\ell\}$, be smooth functions. Suppose
that, $(\xi_k(s))_{k=0}^{\ell-3}$ and for all
$M\in\{0,\cdots,\ell-1\}$, $\mu_M\in\R$ and
$$a_{M}=\xi_M(s)u_0(\tau)+\sum_{k=1}^{n_{M}}\xi_{M,k}(s)u_{M,k}(\tau)$$
satisfy
$$\sum_{k=0}^{M}\big(Q_k^\vartheta-\mu_k\big)a_{M-k}=0\,.$$
We want to select $\mu_\ell\in\R$ and
$$a_{\ell}=\xi_\ell(s)u_0(\tau)+\sum_{k=1}^{n_{\ell}}\xi_{\ell,k}(s)u_{\ell,k}(\tau)$$
such that \eqref{eq:gterm} is satisfied. We expand \eqref{eq:gterm}
as follows:
$$
\big(Q_0^\vartheta+1\big)a_{\ell}+\big(Q_3^\vartheta-\mu_3\big)a_{\ell-3}+G_{\ell-3} +\big(Q_{\ell}^\vartheta-\mu_\ell\big)a_0=0\,,
$$
where
$$
G_{\ell-3}(s,\tau)=\sum_{k=4}^{\ell-1}\big(Q_k^\vartheta-\mu_k)a_{\ell-k}\,.
$$
Multiplying by $u_0(\tau)$ and integrating over $\tau$ yields
\begin{equation}\label{eq:xiell}
2\vartheta'(s)\xi_{\ell-3}'(s)+(\vartheta''(s)-\mu_3)\xi_{\ell-3}(s)+F_{\ell-3}(s)-\mu_\ell \xi_0(s)=0\,,
\end{equation}
where
$$F_{\ell-3}(s)=\int_0^\infty\Big\{\big(Q_3^\vartheta-\mu_3\big)\left(\sum_{k=1}^{n_{M}}\xi_{\ell-3,k}(s)u_{\ell-3,k}(\tau)\right)
+G_{\ell-3}(s,\tau)+Q_{\ell}^\vartheta
a_0(s,\tau)\Big\}u_0(\tau)\,d\tau\,.$$ We select $\mu_\ell$ such that
$$
F_{\ell-3}(0)-\mu_\ell \xi_0(0)=0\,,
$$
and impose the following condition
$$\xi_\ell (0)=0\,.$$
We divide both sides of \eqref{eq:xiell} by $2\vartheta'(s)$ and get
a first order differential equation in $\xi_{\ell-3}(s)$. Under the
condition $\xi_{\ell-3}(0)=0$, this equation has a unique solution
$\xi_{\ell-3}(s)$ defined in $(-\frac{|\partial
\Omega|}{2},\frac{|\partial \Omega|}{2} )$.  Then we define
$a_\ell(s,\tau)$ as follows:
$$a_\ell(s,\tau)=\xi_\ell(s)u_0(\tau)+(Q_0^\vartheta+1)^{-1}(v_{\ell-3}(\sigma,\tau))\,,$$
where
$$
v_{\ell-3}(\sigma,\tau)=- \big(Q_3^\vartheta-\mu_3\big)a_{\ell-3}-G_{\ell-3} -\big(Q_{\ell}^\vartheta-\mu_\ell\big)a_0\,.
$$

\end{document}